\documentclass[11pt, reqno]{amsart}

\usepackage[margin=0.9in]{geometry}
\usepackage[utf8]{inputenc}
\usepackage[T1]{fontenc}

\usepackage{subfiles}
\usepackage{comment}
\usepackage{float}
\usepackage{marginnote}
\usepackage{euscript}
\usepackage[dvipsnames]{xcolor}   		             		
\usepackage{graphicx}			
\usepackage{amssymb}
\usepackage{mathrsfs}
\usepackage{amsthm}
\usepackage{amsmath, amsrefs}
\usepackage[foot]{amsaddr}
\usepackage{mathtools}
\usepackage[cal=cm]{mathalfa}
\usepackage{stmaryrd}
\usepackage{upgreek}
\usepackage{bbm}
\usepackage{xypic, dynkin-diagrams}
\usepackage[hypertexnames=false, hyperfootnotes=false, colorlinks=true,linktocpage=true, allcolors=highlight, bookmarksdepth = 2]{hyperref}
\usepackage{cleveref}
\usepackage{url}
\usepackage{float}
\usepackage[full]{textcomp}
\makeatletter
\newcommand{\citecomment}[2][]{\citen{#2}#1\citevar}
\newcommand{\citeone}[1]{\citecomment{#1}}
\newcommand{\citetwo}[2][]{\citecomment[,~#1]{#2}}
\newcommand{\citevar}{\@ifnextchar\bgroup{;~\citeone}{\@ifnextchar[{;~\citetwo}{]}}}
\newcommand{\citefirst}{\@ifnextchar\bgroup{\citeone}{\@ifnextchar[{\citetwo}{]}}}

\makeatother

\usepackage{tikz,tikz-cd,tikz-3dplot}
\usepackage{pgfplots}
\pgfplotsset{compat=1.18}
\usetikzlibrary{calc}
\usetikzlibrary{fadings}
\usetikzlibrary{decorations.pathmorphing}
\usetikzlibrary{decorations.pathreplacing}
\usetikzlibrary{patterns}
\usetikzlibrary{arrows,shadows,positioning, calc, decorations.markings, 
	hobby,quotes,angles,decorations.pathreplacing,intersections,shapes}
\usetikzlibrary{fillbetween,backgrounds}
\usepackage{xcolor}

\definecolor{highlight}{HTML}{3465a4}

\usepackage{anyfontsize}
\usepackage[lining]{libertine}%
\usepackage{courier}
\usepackage{csquotes}
\makeatletter
\renewcommand*\libertine@figurestyle{LF}
\makeatother
\usepackage[libertine,libaltvw,liby]{newtxmath}
\usepackage{microtype}

\usepackage{array}
\newcolumntype{H}{>{\setbox0=\hbox\bgroup}c<{\egroup}@{}}

\BibSpec{article}{%
+{}{\sc \PrintAuthors} {author}
+{,}{ } {title}
+{,}{ \textit} {journal}
+{,}{ } {volume}
+{}{ \IfEmptyBibField{journal}{}{\PrintDate{date}}} {transition}
+{,}{ } {pages}
+{,}{ } {status}
+{.}{} {transition}
+{}{ \, Preprint available at \tt} {eprint}
+{.}{} {transition}
}

\BibSpec{book}{%
+{}{\sc \PrintAuthors} {author}
+{,}{ } {title}
+{.}{ \textit} {series}
+{,}{ } {volume}
+{.}{ } {publisher}
+{,}{ } {place}
+{,}{ } {date}
+{,}{ } {note}
+{.}{} {transition}
}

\BibSpec{incollection}{
+{} {\sc \PrintAuthors} {author}
+{,} { } {title}
+{,} { in \textit} {booktitle}
+{,}{ } {series}
+{,}{ } {volume}
+{}{ \PrintDate } {date}
+{,} { pp.~} {pages}
+{,}{ } {publisher}
+{ }{ Preprint available at \tt} {eprint}
+{.}{} {transition}
}

\BibSpec{inproceedings}{
+{} {\sc \PrintAuthors} {author}
+{,} { } {title}
+{,} { in \textit} {booktitle}
+{,} { pp.~} {pages}
+{,} { \PrintDateB} {date}
+{,} { available at \eprint} {eprint}
+{.} {} {transition}
}

\BibSpec{collection.article}{
+{} {\sc \PrintAuthors} {author}
+{,} { } {title}
+{,} { in \it \PrintConference} {conference}
+{,} { pp.~} {pages}
+{.} {\PrintBook} {book}
+{,} { \PrintDateB} {date}
+{.} {} {transition}
}

\BibSpec{innerbook}{
+{.} { \emph} {title}
+{.} { } {part}
+{:} { \emph} {subtitle}
+{.} { } {series}
+{,} { } {volume}
+{.} { Edited by \PrintNameList} {editor}
+{.} { Translated by \PrintNameList}{translator}
+{.} { \PrintContributions} {contribution}
+{.} { } {publisher}
+{.} { } {organization}
+{,} { } {address}
+{,} { \PrintEdition} {edition}
+{,} { \PrintDateB} {date}
+{.} { } {note}
}

\tikzset{
	commutative diagrams/.cd, 
	arrow style=tikz, 
	diagrams={>=stealth}
}
\tikzset{
	arrow/.pic={\path[tips,every arrow/.try,->,>=#1] (0,0) -- +(0,4pt);},
	pics/arrow/.default={triangle 90}
}
\tikzset{->-/.style={decoration={
			markings,
			mark=at position .6 with {\arrow{latex}}},postaction={decorate}}
}
\tikzset{
	c/.style={every coordinate/.try}
}
\usepackage{enumerate}

\usepackage{xifthen}
\newcommand{\Gm}[1][]{%
	\ifthenelse{\isempty{#1}}%
	{\mathbb{C}^\times}% if #1 is empty
	{(\mathbb{C}^\times)^{#1}}% if #1 is not empty
}

\newcommand{\re}{\mathrm{e}}

\newcommand{\de}{{\partial}}

\newcommand{\bbV}{\mathbb{V}}

\newcommand{\bbZ}{\mathbb{Z}}

\newcommand{\bbC}{\mathbb{C}}
\newcommand{\bbP}{\mathbb{P}}

\newcommand{\bbQ}{\mathbb{Q}}

\newcommand{\cT}{\mathcal{T}}

\newcommand{\cP}{\mathcal{P}}

\newcommand{\cW}{\mathcal{W}}

\newcommand{\cA}{\mathcal{A}}

\newcommand{\RR}{\mathcal{R}}

\newcommand{\cM}{\mathcal M}
\newcommand{\cQ}{\mathcal Q}

\renewcommand{\l}{\left}
\renewcommand{\r}{\right}

\def\beq{\begin{equation}}                     %  
	\def\eeq{\end{equation}}                       % 
\def\bea{\begin{eqnarray}}                     %         % 
	\def\eea{\end{eqnarray}}
\def\bary{\begin{array}} 
	\def\eary{\end{array}} 
\def\ben{\begin{enumerate}} 
	\def\een{\end{enumerate}}
\def\bit{\begin{itemize}} 
	\def\eit{\end{itemize}}

\theoremstyle{plain}
\newtheorem{thm}{Theorem}[section]
\newtheorem*{thm*}{Theorem}
\newtheorem{lem}[thm]{Lemma}

\newtheorem{prop}[thm]{Proposition}
\newtheorem*{prop*}{Proposition}

\newtheorem*{conj*}{Conjecture}

\newtheorem{cor}[thm]{Corollary}
\newtheorem*{cor*}{Corollary}
\theoremstyle{definition}
\newtheorem{defn}[thm]{Definition}
\newtheorem{rmk}[thm]{Remark}

\theoremstyle{plain}
\newtheorem{innercustomthm}{Theorem}
\newenvironment{customthm}[1]
{\renewcommand\theinnercustomthm{#1}\innercustomthm}
{\endinnercustomthm}

\theoremstyle{plain}

\theoremstyle{plain}

\theoremstyle{definition}

\theoremstyle{plain}

\crefname{equation}{Eq.}{Eqs.}
\crefname{eqnarray}{Eq.}{Eqs.}
\crefname{algo}{algorithm}{algorithms}
\crefname{conj}{conjecture}{conjectures}
\crefname{lem}{lemma}{lemmas}
\crefname{thm}{theorem}{theorems}
\crefname{claim}{claim}{claims}
\crefname{rmk}{remark}{remarks}
\crefname{prop}{proposition}{propositions}
\crefname{section}{section}{sections}
\crefname{appendix}{appendix}{appendices}
\crefname{cor}{corollary}{corollaries}
\crefname{figure}{figure}{figures}
\crefname{table}{table}{tables}
\crefname{example}{example}{examples}
\crefname{prob}{problem}{problems}
\crefname{assm}{assumption}{assumptions}
\crefname{defn}{definition}{definitions}
\crefname{notation}{notation}{notations}
\crefname{speculation}{speculation}{speculations}
\crefname{construction}{construction}{constructions}
\crefname{observation}{observation}{observations}
\crefname{innercustomthm}{Theorem}{Theorems}
\crefname{innercustomconj}{Conjecture}{Conjectures}
\crefname{innercustomassumption}{assumption}{Assumption}
\crefname{innerpracticalresult}{practical result}{practical results}

\newcommand{\bra}{\left\langle}
\newcommand{\ket}{\right\rangle}

\crefname{equation}{Eq.}{Eqs.}
\crefname{eqnarray}{Eq.}{Eqs.}
\crefname{algo}{Algorithm}{Algorithms}
\crefname{conj}{Conjecture}{Conjectures}
\crefname{lem}{Lemma}{Lemmas}
\crefname{thm}{Theorem}{Theorems}
\crefname{customthm}{Theorem}{Theorems}
\crefname{claim}{Claim}{Claims}
\crefname{rmk}{Remark}{Remarks}
\crefname{prop}{Proposition}{Propositions}
\crefname{section}{Section}{Sections}
\crefname{appendix}{Appendix}{Appendices}
\crefname{cor}{Corollary}{Corollaries}
\crefname{figure}{Figure}{Figures}
\crefname{table}{Table}{Tables}
\crefname{example}{Example}{Examples}
\crefname{prob}{Problem}{Problems}
\crefname{assm}{Assumption}{Assumptions}
\crefname{defn}{Definition}{Definitions}
\crefname{customconj}{Conjecture}{Conjectures}

\begin{document}

\title{
 The Saito Determinant for Extended Affine Weyl Discriminant Strata
  }

\begin{abstract}
We investigate the form of the Saito metric 
%of the canonical Frobenius manifold structure 
on quotients of the reflection representation of an extended affine Weyl group, restricted to an arbitrary stratum of its discriminant. We compute the Saito determinant for all strata and Dynkin types and show that it is divisible by a product of $q$-analogues of restricted roots for the associated hyperplane arrangement. Our results simultaneously generalise the factorisation in the Weyl denominator formula to higher codimension strata, and provide $q$-analogue refinements of results of Antoniou--Feigin--Strachan for Coxeter groups. 
\end{abstract}

\author{Andrea Brini}
\address{A.~Brini: School of Mathematical and Physical Sciences,
University of Sheffield, S3 7RH, Sheffield, United Kingdom. \newline On leave from CNRS, DR 13, Montpellier, France.
}
\email{a.brini@sheffield.ac.uk}
\author{Karoline van Gemst}
\address{K.~van Gemst: Dipartimento di Matematica e Applicazioni, Universit\`a di Milano-Bicocca, \newline
Via Roberto Cozzi 55, I-20125 Milano, Italy and INFN sezione di Milano-Bicocca}
\email{karoline.vangemst@unimib.it}

\maketitle

%This note assembles what is currently known about the determinant of the Saito metric restricted to discriminant strata of the Frobenius manifolds of Dubrovin–Zhang (DZ) type on orbit spaces of extended affine Weyl groups \cite{DZ1}. It is the analogue, in the trigonometric setting, of the results of Antoniou–Feigin–Strachan \cite{AntoniouFS:2020} for finite Coxeter groups. Two complementary strategies are presented:

%Part I (Sections 2–5) treats the classical types $A_l$, $B_l$, $C_l$, $D_l$ for arbitrary strata, using the Landau–Ginzburg superpotential description of the DZ Frobenius structures \cite{DZ1,DubrovinStrachanZhangZuo2019}. The superpotentials are meromorphic rather than polynomial, and the pole structure produces additional factors in the determinant, with exponents governed by the pole orders.

%Part II (Sections 6–10) treats strata of codimensions one, two and three uniformly in the Dynkin type (codimension three under the assumption that $\mathcal{R}$ is simply laced), by adapting the Jacobian-minor method of \cite[§6--7]{AntoniouFS:2020} to the trigonometric setting. The exposition of Part II is self-contained: all statements are proved from first principles, and the method of \cite{AntoniouFS:2020} is developed anew in the present context rather than cited.

\section{Introduction}

\subsection{Background}\label{sec:killing-normalization}

Let $\mathfrak{g}$ be a complex simple Lie algebra of rank $l$, and $(\mathfrak{h}, ())$ be its Cartan subalgebra equipped with the complexified Killing form. We will write $\mathcal{R}$ for the associated irreducible root system in $\mathfrak{h}^*$, and we shall further denote 
by $\Delta=\{\alpha_1,\dots,\alpha_l\}$ and $\Delta^*=\{\omega^\vee_1, \dots, \omega^\vee_l\}$ its set of simple roots and fundamental coweights respectively, $\mathcal{R}_+$ the set of positive roots, $\cA$ the associated hyperplane arrangement, $\mathcal{W}$ the Weyl group, and $h$ the Coxeter number. \\
%As in , for each $\RR$ w
As in \cite{DZ1,Brini:2025lax}, we will furthermore mark a special simple root $\alpha_{\bar k}$, ${\bar k} \in \llbracket 1,l\rrbracket$: when $\RR=A_l$, this can be any root, while for $\RR \neq A_l$, this is the root whose dual fundamental weight is the highest weight of the fundamental representation of $\mathfrak{g}$ of highest dimension. This choice determines an $l$-tuple of positive rational numbers given by
\[
d_i = (\omega_i, \omega_{\bar k}) \in \bbQ_{>0}\,, \quad i=1, \dots, l\,.
\]

To this data, Dubrovin--Zhang \cite{DZ1} associate an extension $\widetilde{\mathcal{W}}^{(\bar k)}$ of the affine Weyl group of type $\RR$,  acting on $\mathfrak{h}\oplus\mathbb{C}$,
%with linear coordinates $(x_1,\dots,x_l; x_{l+1})$ with respect to the basis of sim
%They 
and prove a Chevalley-type theorem for the ring of $\widetilde{\mathcal{W}}^{(\bar k)}$-invariant Fourier polynomials.
%, which is freely generated by $\{\tilde y_1,\dots,\tilde y_l,\tilde y_{l+1}\}$, where $\tilde y_1,\dots,\tilde y_l$ are, up to multiplication by powers of $w_0=\re^{c_\omega x_{l+1}}$, triangular combinations of the fundamental characters $(Y_1, \dots, Y_l)$ of the simple simply-connected group $\exp(\mathfrak{g})$, and $\tilde y_{l+1}$ is proportional to $x_{l+1}$: see \cite{DZ1,BvG,Brini:2025lax} for details. 
We depict the associated affine Dynkin diagrams, with a marking of the Dynkin node corresponding to $\alpha_{\bar k}$, in \cref{fig:dynkin}. The authors of \cite{DZ1} further show that the orbit space of $\widetilde{\cW}$ carries a canonical semisimple Frobenius structure $\mathcal{M}=\mathcal{M}(e,E,\eta,\gamma)$, where $\gamma$ is the  intersection form given by the direct sum of the Killing form on $\mathfrak{h}$ and a non-zero constant on the extended factor, with identity $e=\partial_{t_{\bar k}}$, and Euler field $E=d_{\bar k}^{-1}\de_{x_{l+1}}$. Here $(x_1, \dots, x_l; x_{l+1})$ denote linear coordinates on $\mathfrak{h}\oplus \bbC$ with respect to the basis $\Delta^* \cup \{v_{l+1}\}$, and $(t_1,\dots,t_{l+1})$ denote the flat coordinates of $\eta$, normalised so that
\begin{equation}\label{eq:killing-normalization}
\eta_{\alpha\beta}=\delta_{\alpha+\beta,\,l+2},
\qquad
t_{l+1}=\,x_{l+1}\,. 
%,\quad \in\mathbb{C}^{\times}.
\end{equation}
The Frobenius manifold $\cM$, along with its Dubrovin-dual \cite{MR2070050}, has been studied from a variety of angles in quantum cohomology \cite{Zaslow:1993fa, Brini:2013zsa,Brini:2014fea,Brini:2025lax,ma2026orbifold}, mirror symmetry \cite{DZ1,DubrovinStrachanZhangZuo2019, BvG}, trigonometric $\vee$-systems \cite{feigin2009trigonometric,Alkadhem:2020ybr}, integrable hierarchies \cite{Brini:2010ap,Brini:2011ff,Brini:2014mha,Milanov:2014pma}, and further relations to Chern--Simons \cite{Brini:2011wi,
Borot:2015fxa,Brini:2017tjw,Brini:2008ik} and Seiberg--Witten theory \cite{Brini:2009nbd,
Brini:2017gfi,Brini:2021wrm}, the latter being instrumental in a general construction of mirror Landau--Ginz\-burg superpotentials for $\cM$ \cite{Brini:2019agj,BvG, Brini:2017gfi}.

\begin{table}[t]
\def\arraystretch{1.5}    
    \begin{tabular}{|c|c|c|}
    \hline
    $\mathcal{R}$ & $\bar k$ & Canonically marked affine Dynkin diagram \\
    \hline \hline 
      $A_l$   & $1, \dots, l$ &  \begin{tikzpicture}[scale=3] 
  \dynkin[label, root radius=0.06cm, labels={\alpha_0,\alpha_1,\alpha_2,\alpha_3,\alpha_{\bar{k}}, \alpha_{l-2},\alpha_{l-1},\alpha_l},affine mark=*] A[1]{ooo.X.ooo}  
\end{tikzpicture} \\ \hline   
        $B_l$ &  $l-1$ &  \begin{tikzpicture}[scale=3]
  \dynkin[label,affine mark=*, root radius=0.06cm, labels={\alpha_0~,\alpha_1~,\alpha_2,\alpha_3,\alpha_j, \alpha_{l-2},\alpha_{l-1},\alpha_l}] B[1]{ooo.o.oXo}
\end{tikzpicture} \\ \hline 
$C_l$ &  $l$ &   \begin{tikzpicture}[scale=3]
  \dynkin[label,affine mark=*, root radius=0.06cm, labels={\alpha_0,\alpha_1,\alpha_2,\alpha_3,\alpha_j, \alpha_{l-2},\alpha_{l-1},\alpha_l}] C[1]{ooo.o.ooX}
\end{tikzpicture} \\ \hline 

$D_l$ &  $l-2$ &   \begin{tikzpicture}[scale=3]
  \dynkin[label,affine mark=*, root radius=0.06cm, labels={\alpha_0~,\alpha_1~,\alpha_2,\alpha_3,\alpha_j, \alpha_{l-2},~\alpha_{l-1},~\alpha_l}] D[1]{ooo.o.Xoo}
\end{tikzpicture} \\ \hline 

$E_6$ &   $3$   &   \begin{tikzpicture}[scale=3]
  \dynkin[label,affine mark=*, root radius=0.06cm, labels={~\alpha_0,\alpha_1,~\alpha_6,\alpha_2,\alpha_3, \alpha_{4},\alpha_{5}}] E[1]{oooXoo}
\end{tikzpicture} \\ \hline 

$E_7$
&   $3$ &      \begin{tikzpicture}[scale=3]
  \dynkin[label,affine mark=*, root radius=0.06cm, labels={\alpha_0,\alpha_1,~\alpha_7,\alpha_2,\alpha_3, \alpha_{4},\alpha_{5},\alpha_6}] E[1]{oooXooo}
\end{tikzpicture} \\ \hline 

$E_8$

&  $3$ &       \begin{tikzpicture}[scale=3]
  \dynkin[label,affine mark=*, root radius=0.06cm, labels={\alpha_0,\alpha_1,~\alpha_8,\alpha_2,\alpha_3, \alpha_{4},\alpha_{5},\alpha_6,\alpha_7}] E[1]{oooXoooo}
\end{tikzpicture} \\ \hline 

$F_4$

&   $2$ &  \begin{tikzpicture}[scale=3]
  \dynkin[label,affine mark=*, root radius=0.06cm, labels={\alpha_0,\alpha_1,\alpha_2,\alpha_3, \alpha_{4}}] F[1]{oXoo}
\end{tikzpicture} \\ \hline 

$G_2$
&  $2$ &   \begin{tikzpicture}[scale=3]
  \dynkin[label,affine mark=*, root radius=0.06cm, labels={\alpha_0,\alpha_1,\alpha_2}] G[1]{oX}
\end{tikzpicture} \\ \hline 

    \end{tabular}

\medskip 

\caption{Affine Dynkin diagrams with canonical markings, as in \cite[Table~1]{DZ1}. The node corresponding to the affine root is marked in black, and the canonical marked node is indicated with a $\times$.}

\label{fig:dynkin}
\end{table}

For $x\in \mathfrak{h}$, we shall write 
\[x_i = (\alpha_i, x)\,, \qquad \tilde x_i=(\omega_i^\vee,x)\,, \qquad  i=1,\dots,l\,,\] and let $\partial_{\alpha_j}$ denote the directional derivative along $\alpha_j$. Writing \[x=\sum_i\tilde x_i\alpha_i=\sum_i x_i \omega_i^\vee\,, \] 
%(which holds since $(\omega^\vee_j,\sum_i\tilde x_i\alpha_i)=\tilde x_j$), 
the coordinate vector fields of the system $(\tilde x_1,\dots,\tilde x_l)$ are
\begin{equation}\label{eq:tildex-coords}
\frac{\partial}{\partial\tilde x_i}=\partial_{\alpha_i}.
\end{equation}
%because $\partial_{\alpha_j}\tilde x_i=(\omega^\vee_i,\alpha_j)=\delta^i_j$. 
We shall similarly denote
%expanding $\omega^\vee_j=\sum_k(\omega^\vee_j,\omega^\vee_k)\alpha_k$ (pair both sides with $\omega^\vee_m$), 
\begin{equation}\label{eq:coweight-derivative}
\partial_{\omega^\vee_j}=\sum_{k=1}^{l}(\omega^\vee_j,\omega^\vee_k)\,\partial_{\alpha_k}
\end{equation}
for the derivative along a fundamental coweight. %We will henceforth use the coordinate system $z=(\tilde x_1,\dots,\tilde x_l,x_{l+1})$ on $V\oplus\mathbb{C}$.\\
In what follows, and throughout the paper, we shall use a symmetric normalisation for trigonometric forms on $\mathfrak{h}$: for any $\beta \in \RR$,
%of the weight lattice, 
we will write
\begin{equation*}
\mathsf{e}_\beta(x) := 2\sinh\!\big(\tfrac12\beta(x)\big),
\qquad
c_\beta(x) := \cosh\!\big(\tfrac12\beta(x)\big),
\end{equation*}
where $\beta(x)=(\beta,x)$. The trigonometric binomial $\mathsf{e}_\beta(x)$ can be regarded as a symmetric $q$-analogue of $\beta(x)$, upon sending $x \to \ln q\, x$, which gives $\mathsf{e}_\beta \to q^{x/2}-q^{-x/2}$.\\
We call a {\it unit} a  non-vanishing invertible element of the ring of Fourier polynomials in $(x_1, \dots, x_l; x_{l+1})/m$ for $m \in \bbZ$ -- that is, a non-zero constant multiple of $\re^{(\mu(x) + \nu x_{l+1})/m}$ for $\mu \in \Lambda_w$ a weight lattice element and $\nu \in \bbZ$.  Thus $\mathsf{e}_\beta=e^{-\beta/2}(e^\beta-1)$ differs from $e^{\beta(x)}-1$ by a unit, and for the fundamental coweights $\omega^\vee_i$, defined by $(\omega^\vee_i,\alpha_j)=\delta_{ij}$,
\begin{equation}\label{eq:trig-derivative}
\partial_{\omega^\vee_i}\mathsf{e}_\beta=\langle\beta,\omega^\vee_i\rangle\, c_\beta,
\qquad
c_\beta=1+O\big(\beta(x)^2\big),
\qquad
c_\beta\big|_{\beta(x)=0}=1 .
\end{equation}
Note that $c_\beta$ is even in $\beta(x)$ and equals $1$ to second order. We also record the additivity of restrictions: for weights $\beta,\gamma$,
\begin{equation}\label{eq:trig-additivity}
\mathsf{e}_{\beta+\gamma}\big|_{\{\gamma(x)=0\}} \;=\; \mathsf{e}_{\beta}\big|_{\{\gamma(x)=0\}} .
\end{equation}
%(powers of $w_0$ included). Proportionality statements ($\sim$ or $\propto$) are understood up to units. %Overall constants are recorded where they were computed but are immaterial to the structural statements.
In the following, for $p(x)$, $q(x)$ Fourier polynomials in $(x_1, \dots, x_{l+1})$, we shall write
\[
p \equiv q
\]
if $p=q$ modulo units. We shall also write $p\propto q$ if $p=c\,q$ for a non-zero constant $c$.\\

\subsection{The Saito determinant on discriminant strata}\label{sec:intro-strata} The determinant of the Gram matrix \eqref{eq:killing-normalization} of the Saito metric $\eta$ is obviously constant in the flat chart $(t_1, \dots, t_{l+1})$. 
From \cite{B02,Brini:2025lax}, the determinant of the Jacobian matrix associated to the change-of-variables $t_i \longrightarrow t_i(x_1, \dots , x_l, x_{l+1})$ is  
\beq 
J := \re^{\sum_{i=1}^l d_i x_{l+1}}\prod_{\beta\in R^+} \mathsf{e}_\beta(x)\,,
\label{disc}
\eeq
which is anti-invariant under the Weyl group action, 
    \[J(w(x)) = (-1)^{\sigma(w)} J(x).\]
%\end{thm}
The determinant of the Gram matrix of the Saito metric in the linear chart $(x_1, \dots, x_l;x_{l+1})$ therefore completely factorises as a product of linear trigonometric forms,
\beq
\det \eta(x) \propto J^2 \propto \re^{2x_{l+1}\sum_{i=1}^l d_i }\prod_{\beta \in \RR_+}\mathsf{e}^2_\beta(x)\,.
\label{eq:detbig}
\eeq 

By \eqref{eq:detbig}, the zero locus of $\det\eta(x)$ (i.e. the discriminant of $\cM$)
is the union of the affine mirrors $\{x:\beta(x)\in2\pi\mathrm{i}\,\bbZ\}$, $\beta\in\RR_+$,
of the reflections in $\widetilde{\mathcal{W}}^{(\bar k)}$, times the line parametrised by
$x_{l+1}$. 
%: the translation part of $\widetilde{\mathcal{W}}^{(\bar k)}$
%is $2\pi\mathrm{i}\,(Q^\vee+\bbZ\,\omega^\vee_{\bar k})$, and
%$\beta(Q^\vee+\bbZ\,\omega^\vee_{\bar k})=\bbZ$ for every root $\beta$ (for
%$\RR\neq C_l$ already $\beta(Q^\vee)=\bbZ$, while for the long roots of $C_l$ one has
%$\beta(Q^\vee)=2\bbZ$ and $\beta(\omega^\vee_l)=1$). 
Following the treatment of the
ordinary Coxeter case \cite{AntoniouFS:2020}, we are interested in calculating the
determinant of the pull-back of the Saito metric to an arbitrary stratum of the linear
arrangement $\cA$, i.e. a finite intersection of root hyperplanes through the origin.
Let \[\Pi_\beta=\{x\in V:\beta(x)=0\}\] be the mirror of $\beta \in \RR$, and for a set
$S=\{\beta_1,\dots,\beta_\kappa\}$ of linearly independent roots let
\[D=D_S=\bigcap_{\beta \in S}\Pi_{\beta}\,.\]
Note that each affine mirror is a $\widetilde{\mathcal{W}}^{(\bar k)}$-translate of the
linear mirror $\Pi_\beta$. The set $\RR_D=\RR\cap\langle S\rangle$ of roots vanishing on $D$ is a parabolic
subsystem of $\RR$, of which $D$ is the common kernel, and every parabolic subsystem is
$\cW$-conjugate to a standard one $\RR\cap\langle\Delta_I\rangle$, $I\subset\Delta$
\cite[Ch.~V, \S3.3]{B02}; since the metric data are
$\widetilde{\mathcal{W}}^{(\bar k)}$-invariant, it suffices to treat $S\subset\Delta$,
and we do so from now on.\footnote{Intersections of affine mirrors of codimension
greater than one need not be $\widetilde{\mathcal{W}}^{(\bar k)}$-conjugate to strata of
$\cA$: for instance $\{v_1=v_2=\pi\mathrm{i}\}$ in type $B_l$, along which the roots
vanishing modulo $2\pi\mathrm{i}$ form the non-parabolic subsystem $A_1\times A_1\subset B_2$.
In general the root subsystems attached to such strata are those of proper subdiagrams of
the affine Dynkin diagrams of \cref{fig:dynkin}, only some of which are conjugate to
subdiagrams of the finite ones. 
%For the classical types, the Landau--Ginzburg description of \cref{sec:Al-case,sec:BCD-case} is phrased in terms of coincidences on the Cartan torus and applies verbatim to all of them; for the exceptional types our results concern the strata of $\cA$ only.
} Define
\begin{equation}\label{eq:RD-decomposition}
\mathcal{R}_D=\mathcal{R}\cap\langle S\rangle
=\{\alpha\in\mathcal{R}:\alpha|_D=0\}\,.
%=\bigsqcup_{i=1}^{L}\mathcal{R}_D^{(i)}
\end{equation}
%(orthogonal decomposition into irreducible subsystems, of ranks $r_i$ and Coxeter numbers $h^{(i)}$), and, for $\beta\in\mathcal{R}\setminus\mathcal{R}_D$,
%\begin{equation}\label{eq:RDbeta-decomposition}
%\mathcal{R}_{D,\beta}=\langle\mathcal{R}_D,\beta\rangle\cap\mathcal{R}
%=\bigsqcup_{i=0}^{p}\mathcal{R}_{D,\beta}^{(i)},
%\qquad \beta\in\mathcal{R}_{D,\beta}^{(0)} .
%\end{equation}
Then $\beta\in\mathcal{R}\setminus\mathcal{R}_D$, $\beta|_D$ restricts to a non-zero linear form on $D$, and $\mathsf{e}_{\beta|_D}$ denotes the corresponding trigonometric form of Section~\ref{sec:killing-normalization}. \\

Let now $\eta_D \coloneqq i^*_D\eta$, with $i_D : D \hookrightarrow \cM$ the inclusion map. We shall prove an analogue of \eqref{eq:detbig} for a stratum $D$ of any codimension.
\begin{customthm}{A}
\label{thm:main}
For $\RR$ of any Dynkin type, and $D$ any discriminant stratum, the Gram matrix of the Saito metric in linear coordinates satisfies
\begin{equation}\label{eq:main-factorization}
\det\eta_D \;\equiv\; 
%\re^{h d x_{l+1}}
\prod_{\beta\in\mathcal{R}_+\setminus\mathcal{R}_D}\mathsf{e}_{\beta|_D}^{\,n_{\beta,D}},
\end{equation}
for $n_{\beta,D}\in \bbZ_{\geq 0}$. 
\end{customthm}

\begin{rmk}
    We note that the discriminant strata are natural submanifolds of the ambient Frobenius manifold as conjectured in \cite{Strachan:2004} (with $\eta_D$ non-degenerate on the generic locus, which is a consequence of the factorisation result of Theorem \ref{thm:main} together with an easy check for $\mathcal{R}$ exceptional and $\dim(D) = 1$). Indeed, the Euler vector field is tangent to $D$, while closure of the Frobenius multiplication on $TD$ follows from the restriction of the almost-dual structure; see  \cite[Chap.~7.2]{karoth} for details.
\end{rmk}

\begin{rmk}
In \cite{AntoniouFS:2020}, an analogous statement was made in the Coxeter case where the exponents were furthermore linked to  %$n_{\beta,D}$ — as a 
the Coxeter number of a root subsystem of $\RR$ specified by $\beta$ and $D$.
%, or otherwise — is asserted; indeed such an 
This identification fails in the extended affine setting, as explicit counterexamples show, where \eqref{eq:main-factorization} rather yields a non-trivial trigonometric refinement of the expressions of \cite{AntoniouFS:2020}. For example, if $D$ is a codimension-$(l-1)$ stratum parametrised by a linear coordinate $x_j$, \cite{AntoniouFS:2020} find that $\det \eta_D$ is a monomial $x_{j}^{h}$ where $h$ is the Coxeter number of $\RR$. Taking e.g. $\RR=E_8$ and $S=\Delta \setminus \{\alpha_3\}$, \cite[Thm.~7.1]{AntoniouFS:2020} gives in the polynomial Frobenius case
\[
\det \eta_D \propto x_3^{30}\,.
\]
The extended affine Weyl counterpart of that, as we shall see explicitly, is instead
\[
\det \eta_D \propto \re^{60 (x_9-x_3)} \left(1-\re^{x_3}\right) \left(1-\re^{2 x_3}\right){}^4 \left(1-\re^{3 x_3}\right){}^6 \left(1-\re^{4 x_3}\right){}^8 \left(1-\re^{5
   x_3}\right){}^5 \left(1-\re^{6 x_3}\right){}^6.
\]
Note that the exponents $n_{\beta_i,D}$ here
%coincide with the Coxeter label of $\beta_i=\alpha_i \in \Delta$ a simple root, and as such 
provide a partition of the Coxeter number $h=30$. Similarly, all other codimension $l-1$ strata will give  a refinement of the finite Coxeter setup, with different partitions of the Coxeter number.
\end{rmk}

To prove \cref{thm:main} we will adapt the strategy employed by \cite{AntoniouFS:2020} in the Coxeter case. When $\mathcal{R}\in\{A_l,B_l,C_l,D_l\}$ is classical, we will use the Landau--Ginzburg superpotential description of $\cM$ given in \cite{DZ1, DubrovinStrachanZhangZuo2019, BvG,Brini:2017gfi} to compute $n_{\beta,D}$ directly for every stratum. The result is given by Theorems \ref{thm:An-determinant} and \ref{thm:BCD-determinant}, and takes the form of a finite-type multiplicity reduced, at $\beta$ whose restriction to $D$ coincides with a pole locus of the Landau–Ginzburg superpotential, by the order of that pole.
Beyond the classical types we will establish the factorisation \eqref{eq:main-factorization} itself, together with explicit formulas for $\det\eta_D$ in terms of Jacobian minors, for strata of codimension at most two (codimension three when $\RR$ is simply laced). This leaves out the case of codimension three strata for $\RR=F_4$, and codimension greater than three strata for $E_l$: because these strata are of relatively low dimension, they are amenable to a direct calculation which we systematically perform. The results for these exceptional cases are available in the ancillary {\it Wolfram Language} notebook.

%\textbf{Notation.} In Part I, $\lambda$ denotes the superpotential and $\mu$ (respectively $P$) its argument; $k$ abbreviates the marked-node label $\bar k$; $d+1=\dim D$; $m_a$ are stratum multiplicities, $\xi_a$ stratum coordinates, $q_i$ critical points of $\lambda_D$, and $u_i=\lambda_D(q_i)$ canonical coordinates. 
%In Part II, following \cite[§7.4]{AntoniouFS:2020}, the letters $\lambda,\nu,\theta$ denote the three simple roots of a codimension-three stratum; superpotentials do not occur there. 
%In Section 9 we write $d_S=|\mathcal{R}_+\cap\langle S\rangle|-2$, reserving $d$ for dimensions.

\subsection*{Acknowledgements} We  acknowledge HPC facilities at the University of Sheffield through its {\it Stanage} compute cluster, which we employed for the calculation of the Saito determinants of the  codimension four strata for $\RR=E_8$. A.~B~ was supported by EPSRC through grants refs.~EP/S003657/2 and UKRI2394. K. v. G. was supported by EPSRC through grant ref.~EP/S003657/2 and by INFN through the IS-CSN4 ``Mathematical methods of
nonlinear physics''. 

\section{Classical series}

The ring of $\cW$-invariant regular functions on the Cartan torus $\cT\coloneq \exp(\mathfrak{h})$ is generated by the Weyl orbit sums of the fundamental weights,
\[
Y_i(x_1, \dots, x_l) = \sum_{w \in \cW} \re^{\bra w(\omega_i), x\ket}\,.
\]
As in \cite{DZ1}, for an element of $\mathfrak{h}\oplus \bbC$, we will write
\[
y_i(x_1, \dots, x_l; x_{l+1}) = 
\begin{cases}
    \re^{d_i x_{l+1}}Y_i(x_1, \dots, x_{l})\, & i\leq l\,, \\
    x_{l+1} & i=l+1\,.
\end{cases} 
\]
By \cite{BvG}, $\cM$ admits a Landau–Ginzburg description in terms of a superpotential/primitive form pair on a family of algebraic curves \cite{BvG}.  Fix  a non-zero dominant weight $\omega $. Starting from the characteristic polynomial of a regular element in the representation $\rho_\omega \in \mathrm{Rep}(\cT)$,
\beq
\cQ = \prod_{\omega'\in \cW(\omega)} \l(\re^{\bra\omega', x\ket }-\mu\r) \in \bbQ[Y_1, \dots , Y_l][\mu]\,,
\label{eq:weylrel}
\eeq
we define the two-variable polynomial
\beq 
\cP\l(y_1, \dots , y_{l+1};\lambda,\mu\r) \coloneqq
 \cQ\l(Y_i=y_i\re^{-d_i x_{l+1}}-\delta_{i \bar k} \lambda \re^{-d_{\bar k}x_{l+1}};\mu \r)\,.
 \label{eq:shiftfun}
\eeq 
%
%where $\omega_{\bar k} \coloneqq \widehat{\omega}$, and consider, 
%for $i=1, \dots , l_{\RR}$ and 
Over the complement of the discriminant, \eqref{eq:shiftfun} defines a plane algebraic curve 
%$\bbA^2_{\lambda,\mu}$ with fibre at $w$ given by
%\beq
%\[ 
$C_y \coloneqq \bbV\l(\cP\r)\,.$
%\]
%\eeq
%
Let $\overline{C_y}$ denote the normalisation of the projective closure of the fibre at $y$,
%$g\coloneqq h^{1,0}\big(\overline{C_y}\big)$, and let $\mathsf{m}$ be the ramification profile over infinity of the 
%restriction of the 
%Cartesian projection $\lambda :  \overline{C_y} \rightarrow \bbP^1$. The corresponding family is the pull-back of the universal curve to $M^{\rm ss}_{\rm AW}$, where the pull-back  metric, product, and intersection tensors are given by \eqref{eq:etares}--\eqref{eq:gres}.
%subvariety of  % subvariety 
%isomorphic to torus $\bbC^\times \times (\bbC^\times)^{l_{\RR}}$ 
%parametrised with coordinates , 
% $M_\omega_{\rm LG}$ of the Hurwitz space $H_{g_{\omega},\mathsf{m}_{\omega}}$, where $g_\omega=h^{1,0}\big(\overline{C_w}\big)$, and $\mathsf{m}_{\omega}$ records the ramification profile at infinity of the $\lambda$-projection.
 %
% The definition of $C_w$ does not immediately clarify that the ramification profile $\mathsf{m}_\omega$ is moduli-independent. 
%
%\bea
%\label{eq:todaeta}
%\eta_{\rm LG}(\de_{y_i}, \de_{y_j}) &=&   \sum_{m}\underset{p_m^{\text{cr}}}{\text{Res}}\frac{\delta_{\de_{y_i}}\lambda~
%  \delta_{\de_{y_j}}\lambda}{\rd \lambda}\phi^2,  \\
%\label{eq:todac}
%c_{\rm LG}(\de_{y_i}, \de_{y_j}, \de_{y_k}) &=&  
%\sum_{m}\underset{p_m^{\text{cr}}}{\text{Res}}\frac{\delta_{\de_{y_i}}\lambda~
%  \delta_{\de_{y_j}}\lambda \delta_{\de_{y_k}}\lambda}{\rd \lambda}\phi^2,\\
%\label{eq:todag}
%\eta^\flat_{\rm LG}(\de_{y_i}, \de_{y_j}) &=&  \sum_{m}\underset{p_m^{\text{cr}}}{\text{Res}}\frac{\delta_{\de_{y_i}}\log \lambda~
%  \delta_{\de_{y_j}} \log \lambda}{\rd \log \lambda}\phi^2, 
%\eea
%
with $\{q_i\}_i$ denoting the ramification points of $\lambda :
\overline{C_y} \longrightarrow \bbP^1$. When $\RR \in \{A_l,B_l,C_l,D_l\}$ and $\omega$ is the highest weight of the\footnote{This choice isn't always unique, but different choices are related by a change of basis in $\mathfrak{h}$ induced by an outer automorphism of the Lie algebra. More specifically, for $\RR=A_l$, we have two minimal choices related by duality; and for $\RR=D_4$ we have three choices related by triality. In each case, the relation between them is a linear change of coordinates in the $x$-frame.}  fundamental representation of minimal dimension, we have from \cite{BvG} that $\deg_\lambda \cP = 1$, hence $h^{1,0}(C_y)=0$, and $\overline{C}_y\simeq \bbP^1$ over the smooth locus. The Landau--Ginzburg superpotential defined by the $\lambda$-projection is then a  rational function \cite{BvG,DZ1,DubrovinStrachanZhangZuo2019} on $\bbP^1$, and for tangential vector fields $\zeta_i$ on $D\setminus\Sigma_D$, the Landau--Ginzburg formula for the pull-back $\eta_D$ of the Saito metric to $D$ 
%Writing fundamental characters in the $x$-frame via the Weyl character formula, \[\tilde y_j=\sum_{\omega'}\mathfrak{m}'\,\re^{(\omega',x)}\] for $j\neq0$ and $w_0=\re^{c_\omega x_{l+1}}$, the superpotential becomes a rational function of $\mu$ (type $A$) or of $P$ (types $B,C,D$) whose zeros are exponentials of linear forms in $x$. The Saito metric and the structure constants on a stratum $D$ are computed by the residue formulas
\begin{equation}\label{eq:residue-formulas}
\eta_D(\zeta_1,\zeta_2)=\epsilon_{\RR}\!\!\sum_{q_s\in\mathrm{Crit}(\lambda_D)}\!\!\operatorname*{Res}_{\,q_s}\frac{\zeta_1(\lambda_D)\,\zeta_2(\lambda_D)}{\phi\;\lambda_D'}\,\mathrm{d}\varsigma,
%\qquad
%c_D(\zeta_1,\zeta_2,\zeta_3)=\!\!\sum_{q_s}\!\operatorname*{Res}_{\,q_s}\frac{\zeta_1(\lambda_D)\,\zeta_2(\lambda_D)\,\zeta_3(\lambda_D)}{\phi\;\lambda_D'}\,\mathrm{d}\varsigma,
\end{equation}
where $(\varsigma,\phi,\epsilon_{\RR})=(\mu,\,\mu^2,\,(-1)^{\bar k+1})$ in type $A$, and $(\varsigma,\phi,\epsilon_{\RR})=(P,\,P^2-1,\,(-1)^{\bar{k}})$ in types $B$, $C$, $D$, with the variable $P$ and the exponent $\bar{k}$ as in \cref{sec:BCD-case}. The overall normalisations are fixed to match the conventions of \cite[Chap.~7.3]{karoth}.  By turning the contour around, \eqref{eq:residue-formulas}
reduces to a calculation of residues at the poles of the rational function $\lambda_D \coloneqq \lambda|_D$. 
%In \eqref{eq:residue-formulas}, $\lambda_D:=\lambda|_D$, and we have  $(\varsigma,\phi)=(\mu,\mu^2)$ in type $A$ and $(\varsigma,\phi)=(P,\,P^2-1)$ in types $B,C,D$, up to the sign conventions recorded below. 

The discriminant is the locus where $\lambda$ vanishes at one of its critical points, that is, where a zero of $\lambda$ collides with another zero or, in the $BCD$ case, with a pole. We will, accordingly, label strata by the resulting coincidence patterns. We shall be brief on the details of some of the more involved (if entirely straightforward) residue calculations; the curious reader is referred to \cite[Chap.~7.3]{karoth} for a fuller account. 

\subsection{Case I: $\RR=A_l$}\label{sec:Al-case}

%\subsubsection{Superpotential and strata}

Fixing a marked node $\bar k \in \llbracket 1,l\rrbracket$, we have \cite{DZ1}
\begin{equation}\label{eq:An-superpotential}
\lambda(\hat{\mu})=\frac{(-1)^{\bar k+l}\,a_0\,\prod_{i=1}^{l+1}(\hat{\mu}-a_i)}{\hat{\mu}^{\bar k}}\,,
\end{equation}
where
\[
a_0=\re^{d_{\bar k} x_{l+1}}\,, \quad a_1=\re^{-x_1}\,,\quad a_{l+1}=\re^{x_l}\,,\quad a_i=\re^{x_{i-1}-x_i}\, \text{otherwise}.
\]
%so that $\prod_i a_i=1$.
%The normalisation \[ a_i=a_i\re^{c_\omega x_{l+1}/(l+1-\bar k)}, \quad \mu=\hat{\mu}\,\re^{-c_\omega x_{l+1}/(l+1-\bar k)}\] absorbs $w_0$ into the product.
%; hats are dropped until the change of frame at the end of the section.
Vanishing of $\lambda$ at a critical point forces $a_i=a_j$ for some $i\neq j$; a stratum is therefore a coincidence pattern
\begin{equation*}
a_{1}=\dots=a_{m_1}=\hat{\xi}_1,\ \dots,\
a_{\,\sum_{j<d+1}m_j+1}=\dots=a_{\,\sum_{j\le d+1}m_j}=\hat{\xi}_{d+1},
\qquad \sum_{a=1}^{d+1}m_a=l+1,
\end{equation*}
with $\prod_a\hat{\xi}_a^{m_a}=1$, and
\begin{equation}\label{eq:An-stratum-superpotential}
\lambda_D(\hat{\mu})=\frac{(-1)^{l+\bar k} a_0}{\hat{\mu}^{\bar k}}\prod_{a=1}^{d+1}(\hat{\mu}-\hat{\xi}_a)^{m_a}.
\end{equation}

Define $s \coloneqq a_0^{\frac{1}{l+1-\bar{k}}}$ and rescale 
\begin{equation}
    \mu = s\hat{\mu}, \quad \xi_a = s\hat{\xi}_a, \, \forall \, a=1, \cdots, d+1.
\end{equation}
Then, $\lambda$ carries no explicit $a_0$ prefactor 
\begin{equation}\label{eq:An-stratum-superpotentialScaled}
    \lambda_D(\mu) = \dfrac{(-1)^{l+\bar{k}}}{\mu^{\bar{k}}}\prod_{a=1}^{d+1}(\mu-\xi_a)^{m_a},
\end{equation}
and the coincidence values $\xi_a$, $a=1,\dots,d+1$, are functionally independent coordinates satisfying $\prod_a \xi_a^{m_a} = a_0^{\frac{l+1}{l+1-\bar{k}}}$. We shall work with the scaled quantities $(\mu;\underline{\xi})$ until Theorem \ref{thm:An-determinant}. 
%\subsubsection{Critical data and canonical coordinates}

\begin{lem}\label{lem:An-critical-derivative}
We have
\[
\lambda_D'(\mu)=\frac{(-1)^{l+\bar k}(l+1-\bar k)}{\mu^{\bar k+1}}\prod_{a=1}^{d+1}(\mu-\xi_a)^{m_a-1}\prod_{j=0}^{d}(\mu-q_j)\] for some $q_j\in\mathbb{C}$ and
\begin{equation*}
\lambda_D''(q_r)=\frac{(-1)^{l+\bar k}(l+1-\bar k)}{q_r^{\,\bar k+1}}\prod_{a=1}^{d+1}(q_r-\xi_a)^{m_a-1}\prod_{j\neq r}(q_r-q_j).
\end{equation*}
\end{lem}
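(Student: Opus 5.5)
The plan is to differentiate the scaled superpotential \eqref{eq:An-stratum-superpotentialScaled} logarithmically and read off the structure of the numerator. Write $F(\mu)=\prod_{a=1}^{d+1}(\mu-\xi_a)^{m_a}$, so that $\lambda_D=(-1)^{l+\bar k}\mu^{-\bar k}F(\mu)$. Then
\[
\lambda_D'(\mu)=(-1)^{l+\bar k}\mu^{-\bar k-1}\bigl(\mu F'(\mu)-\bar k F(\mu)\bigr).
\]
Each factor $(\mu-\xi_a)^{m_a}$ contributes $(\mu-\xi_a)^{m_a-1}$ to both $F$ and $F'$. Hence we can write
\[
\mu F'(\mu)-\bar k F(\mu)=\prod_a(\mu-\xi_a)^{m_a-1}\,G(\mu),
\qquad
G(\mu)=\mu\sum_a m_a\prod_{b\neq a}(\mu-\xi_b)-\bar k\prod_{b}(\mu-\xi_b).
\]

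Next we check that $G$ has the claimed form. It is a polynomial of degree at most $d+1$, and its $\mu^{d+1}$ coefficient is $\sum_a m_a-\bar k=l+1-\bar k$, using $\sum_a m_a=l+1$. This coefficient is nonzero since $\bar k\le l$. Therefore $G$ has exactly degree $d+1$ and factors over $\mathbb{C}$ as $(l+1-\bar k)\prod_{j=0}^{d}(\mu-q_j)$. Substituting back gives the first formula.

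For the second formula, write $\lambda_D'(\mu)=(\mu-q_r)H_r(\mu)$, where $H_r$ collects all the other factors. Differentiating gives $\lambda_D''(q_r)=H_r(q_r)$, since the term $(\mu-q_r)H_r'(\mu)$ vanishes at $\mu=q_r$. Evaluating $H_r$ at $q_r$ gives exactly the displayed expression.

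The only point needing care is the interpretation at $q_r$. The formula holds as stated for any root $q_r$, but it is meant for genuine simple critical points with $q_r\neq 0$ and $q_r\neq\xi_a$. These are the points appearing in \eqref{eq:residue-formulas}. The identity itself needs no genericity: the factor $H_r$ is simply evaluated at $q_r$, and if $q_r$ coincides with some $q_j$ or $\xi_a$, both sides vanish consistently. For later use, I would also note that on the generic point of $D$ the $q_j$ are distinct and differ from $0$ and from the $\xi_a$. This holds because $G(0)=-\bar k\prod_b(-\xi_b)\neq 0$, and $G(\xi_a)=m_a\xi_a\prod_{b\ne a}(\xi_a-\xi_b)\neq0$. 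Distinctness of the $q_j$ follows by semisimplicity off the discriminant. None of this is an obstacle; the lemma is a direct computation.
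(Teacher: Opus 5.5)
Your proposal is correct and follows the paper's proof. Both factor $\prod_a(\mu-\xi_a)^{m_a-1}$ out of $\mu F'-\bar k F$, observe that the cofactor has degree $d+1$ with leading coefficient $\sum_a m_a-\bar k=l+1-\bar k\neq 0$, factor it over its roots, and obtain $\lambda_D''(q_r)$ by differentiating $(\mu-q_r)H_r(\mu)$ at $q_r$.

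Your closing remark on distinctness of the $q_j$ is not needed for the lemma. Moreover, ``semisimplicity off the discriminant'' does not literally apply, because $D$ lies inside the ambient discriminant. If you want that fact, argue directly. For real $0<\xi_1<\dots<\xi_{d+1}$, the logarithmic derivative $\lambda_D'/\lambda_D=-\bar k/\mu+\sum_a m_a/(\mu-\xi_a)$ changes sign on $(-\infty,0)$ and on each $(\xi_a,\xi_{a+1})$. This gives $d+1$ simple real roots of $G$, so the $q_j$ are distinct at generic points of $D$.
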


\begin{proof}
Differentiation of \eqref{eq:An-stratum-superpotentialScaled} yields \[\lambda_D'=\frac{(-1)^{l+\bar k}}{\mu^{\bar k+1}}\prod_a(\mu-\xi_a)^{m_a-1}R(\mu)\] with $R(\mu)$ a polynomial of degree $d+1$ and leading coefficient $l+1-\bar k$. Factorising $R$ over its roots gives the first formula. Differentiating once more and evaluating at $q_r$, only the term in which the derivative falls on $\prod_j(\mu-q_j)$ survives.
\end{proof}

We will call the critical values $u_i=\lambda_D(q_i)$, $i=0,\dots,d$ the {\it canonical coordinates} on the stratum.
%Since $\lambda_D(\mu)-\lambda_D(q_j)$ vanishes to second order at $\mu=q_j$, differentiation in $u_i$ gives $\partial_{u_i}\lambda_D(\mu)|_{\mu=q_j}=\partial_{u_i}u_j=\delta_{ij}$.

\begin{prop}\label{prop:An-derivative-formulas}
The fixed $\mu$-derivative of the superpotential and its zeroes with respect to the canonical coordinates on the stratum are
\[
\partial_{u_r}\lambda_D(\mu)=\frac{\mu}{q_r(\mu-q_r)}\,\frac{\lambda_D'(\mu)}{\lambda_D''(q_r)},
\qquad
\partial_{u_r}\xi_a=\frac{\xi_a}{q_r(q_r-\xi_a)\,\lambda_D''(q_r)}.
\]
\end{prop}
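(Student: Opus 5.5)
The plan is to determine $\partial_{u_r}\lambda_D(\mu)$ as a rational function of $\mu$: first fix its shape from the form \eqref{eq:An-stratum-superpotentialScaled}, then pin it down by its values at the critical points $q_j$. The formula for $\partial_{u_r}\xi_a$ is then read off from a residue at $\mu=\xi_a$.

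\emph{Step 1: the shape of $\partial_{u_r}\lambda_D$.} The coordinates on $D$ are the $d+1$ functionally independent $\xi_a$, and $\mu$ is held fixed. Taking the logarithmic derivative of \eqref{eq:An-stratum-superpotentialScaled} gives
\[
\partial_{u_r}\lambda_D(\mu)=-\lambda_D(\mu)\sum_{a=1}^{d+1}\frac{m_a\,\partial_{u_r}\xi_a}{\mu-\xi_a}
=\frac{(-1)^{l+\bar k}}{\mu^{\bar k}}\prod_{a=1}^{d+1}(\mu-\xi_a)^{m_a-1}\,N_r(\mu),
\]
where $N_r$ is a polynomial of degree at most $d$.

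\emph{Step 2: values at the critical points.} Since $u_j=\lambda_D(q_j)$ and $\lambda_D'(q_j)=0$, the chain rule gives $\partial_{u_r}u_j=(\partial_{u_r}\lambda_D)(q_j)+\lambda_D'(q_j)\,\partial_{u_r}q_j=(\partial_{u_r}\lambda_D)(q_j)$. Hence $(\partial_{u_r}\lambda_D)(q_j)=\delta_{rj}$ for $j=0,\dots,d$.

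\emph{Step 3: matching with the candidate.} Consider the candidate $F_r(\mu):=\mu\,\lambda_D'(\mu)/\bigl(q_r(\mu-q_r)\lambda_D''(q_r)\bigr)$. By \cref{lem:An-critical-derivative} it equals
\[
F_r(\mu)=\frac{(-1)^{l+\bar k}(l+1-\bar k)}{q_r\,\lambda_D''(q_r)\,\mu^{\bar k}}\prod_a(\mu-\xi_a)^{m_a-1}\prod_{j\neq r}(\mu-q_j).
\]
This has exactly the shape of Step 1, with numerator of degree $d$. It vanishes at $q_j$ for $j\neq r$. At $q_r$, l'H\^opital gives $F_r(q_r)=q_r\lambda_D''(q_r)/(q_r\lambda_D''(q_r))=1$.

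On the generic (semisimple) part of $D$ the $q_j$ are distinct and non-zero, and the $\xi_a$ differ from them. A polynomial of degree $\le d$ is determined by its values at $d+1$ distinct points, so $N_r$ agrees with the numerator of $F_r$. This gives $\partial_{u_r}\lambda_D=F_r$, the first formula. The only point needing care is this degree count: we must check that $\partial_{u_r}\lambda_D$ has no pole at $\infty$ beyond what the shape allows. That holds because the leading $\mu$-power of $\lambda_D$ is independent of $\xi$.

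\emph{Step 4: the formula for $\partial_{u_r}\xi_a$.} Compare residues of $\partial_{u_r}\lambda_D/\lambda_D$ at $\mu=\xi_a$. From Step 1 this residue is $-m_a\,\partial_{u_r}\xi_a$. From $F_r/\lambda_D$, using $\operatorname{Res}_{\xi_a}\lambda_D'/\lambda_D=m_a$, it is $m_a\,\xi_a/\bigl(q_r(\xi_a-q_r)\lambda_D''(q_r)\bigr)$. Equating the two yields
\[
\partial_{u_r}\xi_a=\frac{\xi_a}{q_r(q_r-\xi_a)\,\lambda_D''(q_r)}.
\]
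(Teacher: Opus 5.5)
Your proposal is correct and follows essentially the paper's argument: you fix the shape of $\partial_{u_r}\lambda_D$ from the logarithmic derivative, impose $(\partial_{u_r}\lambda_D)(q_j)=\delta_{rj}$, and match against \cref{lem:An-critical-derivative} by interpolation, then do a local analysis at $\mu=\xi_a$ to get $\partial_{u_r}\xi_a$. Your comparison of residues of $\partial_{u_r}\lambda_D/\lambda_D$ at $\xi_a$ is a cleaner phrasing of the paper's ``divide by $(\mu-\xi_a)^{m_a-1}$ and let $\mu\to\xi_a$'' step.
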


\begin{proof}
From \eqref{eq:An-stratum-superpotentialScaled}, \[\partial_{u_r}\lambda_D=\frac{(-1)^{l+\bar k}}{\mu^{\bar k}}\prod_a(\mu-\xi_a)^{m_a-1}F_r(\mu)\] with $\deg_\mu F_r(\mu)=d$. The $d+1$ values $F_r(q_i)$ are fixed by $\partial_{u_r}\lambda_D(q_i)=\delta_{ri}$, and Lagrange interpolation reconstructs $F_r$; comparison with Lemma~\ref{lem:An-critical-derivative} gives the first formula. For the second, differentiate \eqref{eq:An-stratum-superpotentialScaled} logarithmically in $u_r$, divide by $(\mu-\xi_a)^{m_a-1}$, let $\mu\to\xi_a$, and use \[\frac{\lambda_D}{(\mu-\xi_a)^{m_a}}\bigg|_{\xi_a}=\frac{\lambda_D'}{(m_a(\mu-\xi_a)^{m_a-1})}\bigg|_{\xi_a}.\]
\end{proof}

\begin{lem}\label{lem:An-canonical-coords}
In canonical coordinates for $D$, we have
\[
\eta_D(\partial_{u_i},\partial_{u_j})=\frac{(-1)^{\bar k+1}\delta_{ij}}{q_i^2\,\lambda_D''(q_i)}.
\]
%\qquad
%\partial_{u_i}\cdot\partial_{u_j}=\delta_{ij}\,\partial_{u_j}.
Furthermore, the following equalities hold:

\beq
\prod_{i=0}^{d}q_i=-\frac{\bar k}{\,l+1-\bar k\,}\prod_{a=1}^{d+1}\xi_a\,,
\qquad
\prod_{i=0}^{d}(\xi_a-q_i)=\frac{\xi_a\,m_a}{\,l+1-\bar k\,}\prod_{i\neq a}(\xi_a-\xi_i)\,,
\label{eq:An-identities}
\eeq
\beq
\frac{\prod_{i=0}^{d}\lambda_D''(q_i)}{\prod_{i<j}(q_i-q_j)^2}
= \, \frac{\prod_{i,a}(q_i-\xi_a)^{m_a-1}}{\prod_{a}\xi_a^{\,\bar k+1}}\,
\frac{(l+1-\bar k)^{d+\bar k+2}\,(-1)^{(l+\bar k)d+\frac{d(d+1)}{2}+l+1}}{\bar k^{\,\bar k+1}}.
\label{eq:z-identity}
\eeq
\end{lem}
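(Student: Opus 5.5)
The plan is to evaluate the residue formula \eqref{eq:residue-formulas} directly in the canonical frame, and then obtain the identities \eqref{eq:An-identities}, \eqref{eq:z-identity} by comparing coefficients and resultants of the polynomial $R(\mu)=(l+1-\bar k)\prod_j(\mu-q_j)$ from Lemma~\ref{lem:An-critical-derivative}.

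\emph{The metric.} Take $\zeta_1=\partial_{u_i}$ and $\zeta_2=\partial_{u_j}$ in \eqref{eq:residue-formulas}, with $\phi=\mu^2$, and substitute Proposition~\ref{prop:An-derivative-formulas}. The integrand becomes
\[
\frac{\mu^2}{q_iq_j(\mu-q_i)(\mu-q_j)}\,\frac{\lambda_D'(\mu)^2}{\lambda_D''(q_i)\lambda_D''(q_j)}\,\frac{1}{\mu^2\lambda_D'(\mu)}
=\frac{\lambda_D'(\mu)}{q_iq_j(\mu-q_i)(\mu-q_j)\lambda_D''(q_i)\lambda_D''(q_j)} .
\]
Since $\lambda_D'$ vanishes simply at every $q_s$, this is regular at $q_s$ whenever $s\notin\{i,j\}$.

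For $i\ne j$, the integrand is regular at every critical point, so $\eta_D(\partial_{u_i},\partial_{u_j})=0$.

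For $i=j$, the only contribution comes from the simple pole at $q_i$. There $\lambda_D'(\mu)/(\mu-q_i)^2$ has residue $\lambda_D''(q_i)$. Multiplying by $\epsilon_{\RR}=(-1)^{\bar k+1}$ gives the stated formula.

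\emph{The identities \eqref{eq:An-identities}.} First write $R$ explicitly. From $\mu\lambda_D'/\lambda_D=-\bar k+\sum_a m_a\mu/(\mu-\xi_a)$ we get
\[
R(\mu)=-\bar k\prod_a(\mu-\xi_a)+\sum_a m_a\mu\prod_{b\neq a}(\mu-\xi_b).
\]
Evaluating at $\mu=0$ and comparing with $(l+1-\bar k)\prod_j(-q_j)$ yields the first identity. The sign check uses $d+1$ factors on both sides, so the signs $(-1)^{d+1}$ cancel.

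Evaluating at $\mu=\xi_a$ gives $R(\xi_a)=m_a\xi_a\prod_{b\ne a}(\xi_a-\xi_b)=(l+1-\bar k)\prod_i(\xi_a-q_i)$, which is the second identity.

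\emph{The identity \eqref{eq:z-identity}.} Take the product of the formula for $\lambda_D''(q_r)$ from Lemma~\ref{lem:An-critical-derivative} over $r$. The factors $\prod_{r}\prod_{j\ne r}(q_r-q_j)$ equal $(-1)^{d(d+1)/2}\prod_{i<j}(q_i-q_j)^2$, and the constant prefactor contributes $((-1)^{l+\bar k}(l+1-\bar k))^{d+1}$. The remaining factors $\prod_{r,a}(q_r-\xi_a)^{m_a-1}$ are kept as they are. The factor $\prod_r q_r^{-(\bar k+1)}$ is rewritten with the first identity in \eqref{eq:An-identities}, giving
\[
\Bigl(\frac{-(l+1-\bar k)}{\bar k}\Bigr)^{\bar k+1}\prod_a\xi_a^{-(\bar k+1)} .
\]

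Collecting the powers of $l+1-\bar k$ gives $(d+1)+(\bar k+1)=d+\bar k+2$, in agreement with \eqref{eq:z-identity}.

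The main obstacle is the bookkeeping of signs. The sign is $(-1)^{(l+\bar k)(d+1)+d(d+1)/2+\bar k+1}$, which must be reconciled with the stated $(-1)^{(l+\bar k)d+d(d+1)/2+l+1}$. Their ratio is $(-1)^{l+\bar k+\bar k+1+l+1}=1$, so they agree. The plan is to carry each sign separately and verify this equality at the end.
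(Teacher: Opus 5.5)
Your proposal is correct and follows the paper's proof. The paper likewise evaluates the residues directly in the canonical frame, reads off the two identities in \eqref{eq:An-identities} by evaluating the numerator polynomial $R(\mu)=(l+1-\bar k)\prod_j(\mu-q_j)$ at $\mu=0$ and $\mu=\xi_a$, and obtains \eqref{eq:z-identity} by multiplying the formulas for $\lambda_D''(q_r)$ and inserting the first identity; your explicit sign bookkeeping (the two exponents differ by $2\bar k$) fills in what the paper only sketches.
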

\begin{proof}
Substituting Proposition~\ref{prop:An-derivative-formulas} into \eqref{eq:residue-formulas}, the integrand acquires the factor $\mu^2\lambda_D'(\mu)/((\mu-q_i)(\mu-q_j))$ divided by $\mu^2\lambda_D'$; by Lemma~\ref{lem:An-critical-derivative} the residues vanish unless $i=j$, and for $i=j$ the residue at $q_i$ equals $1/(q_i^2\lambda_D''(q_i))$, so the prefactor $\epsilon_{\RR}=(-1)^{\bar k+1}$ in \eqref{eq:residue-formulas} gives the stated expression. Equating the expression of Lemma~\ref{lem:An-critical-derivative} with the direct differentiation of \eqref{eq:An-stratum-superpotentialScaled}, and comparing constant terms, gives the first identity in \eqref{eq:An-identities}, while evaluation of \[\frac{\mu^{\bar k+1}\lambda_D'}{\big((l+1-\bar k)\prod_i(\mu-\xi_i)^{m_i-1}\big)}\bigg|_{\mu=\xi_a}\,,\] using the limit relation quoted in the proof of Proposition~\ref{prop:An-derivative-formulas}, gives the second. Finally, substitute the second formula of Lemma~\ref{lem:An-critical-derivative} for each factor $\lambda_D''(q_i)$; the products $\prod_{j\neq i}(q_i-q_j)$ combine with the denominator to give a sign, and the product $\prod_i q_i^{-(\bar k+1)}$ is evaluated by the first identity in \eqref{eq:An-identities}.
\end{proof}

%subsubsection{The determinant}

\begin{thm}\label{thm:An-determinant}
In the frame of linear coordinates on $D$ we have
\begin{equation}
\det\eta_D\,\propto\,a_0^{\,d+1}\prod_{a=1}^{d+1}\hat{\xi}_a^{\;-\bar k}
\prod_{1\le a<b\le d+1}(\hat{\xi}_a-\hat{\xi}_b)^{\,m_a+m_b}\,,
\label{eq:An-determinant-xframe}
\end{equation}
with a proportionality constant depending on the stratum.
\end{thm}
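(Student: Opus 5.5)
The proof changes frame twice: from canonical coordinates $u_0,\dots,u_d$, where Lemma~\ref{lem:An-canonical-coords} makes $\eta_D$ diagonal, to the coincidence coordinates $\xi_1,\dots,\xi_{d+1}$, and then to linear coordinates on $D$.

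\textbf{Determinant in canonical coordinates.} By Lemma~\ref{lem:An-canonical-coords},
\[
\det\eta_D(\partial_{u_i},\partial_{u_j}) \propto \prod_{i=0}^d \frac{1}{q_i^2\,\lambda_D''(q_i)} .
\]

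\textbf{Passing to the $\xi$-frame.} The coordinates $\xi_1,\dots,\xi_{d+1}$ are $d+1$ functionally independent functions, so they give a chart on $D$ (with $a_0$ recovered from $\prod_a\xi_a^{m_a}$). Proposition~\ref{prop:An-derivative-formulas} gives the Jacobian
\[
\frac{\partial\xi_a}{\partial u_r}=\frac{\xi_a}{q_r\,\lambda_D''(q_r)}\cdot\frac{1}{q_r-\xi_a}.
\]
This is a Cauchy matrix $\bigl(1/(q_r-\xi_a)\bigr)$ scaled by row and column factors. Hence
\[
\det\frac{\partial\xi}{\partial u}=\prod_a\xi_a\prod_r\frac{1}{q_r\lambda_D''(q_r)}\cdot\frac{\prod_{r<s}(q_r-q_s)\prod_{a<b}(\xi_b-\xi_a)}{\prod_{r,a}(q_r-\xi_a)},
\]
up to sign. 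Then $\det\eta_D^{(\xi)}=\det\eta_D^{(u)}\,\bigl(\det\partial\xi/\partial u\bigr)^{-2}$, which equals
\[
\prod_r\frac{\lambda_D''(q_r)}{\prod_{r<s}(q_r-q_s)^2}\cdot\frac{\prod_{r,a}(q_r-\xi_a)^2}{\prod_a\xi_a^2\prod_{a<b}(\xi_a-\xi_b)^2}.
\]
The $q_r^{\pm2}$ factors cancel between the two terms.

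\textbf{Eliminating the critical points.} Insert \eqref{eq:z-identity} for the first factor. This combines $\prod(q_i-\xi_a)^{m_a-1}$ with $\prod(q_i-\xi_a)^{2}$ to give $\prod_{i,a}(q_i-\xi_a)^{m_a+1}$. Then use the second identity in \eqref{eq:An-identities}, which says $\prod_i(\xi_a-q_i)=\frac{\xi_a m_a}{l+1-\bar k}\prod_{b\neq a}(\xi_a-\xi_b)$. Raising it to the power $m_a+1$ and taking the product over $a$ eliminates every $q$. Each pair $a<b$ then collects the exponent $(m_a+1)+(m_b+1)-2=m_a+m_b$ on $(\xi_a-\xi_b)$. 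The power of $\xi_a$ becomes $(m_a+1)-(\bar k+1)-2=m_a-\bar k-2$.

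\textbf{Change to linear coordinates on $D$, and the expected obstacle.} The linear coordinates are affine combinations of the $\log\hat\xi_a$ together with $x_{l+1}$. Rescaling by $s=a_0^{1/(l+1-\bar k)}$ and taking logarithms multiplies the determinant by squared Jacobian factors. These are products of the $\xi_a$, the constraint $\prod_a\xi_a^{m_a}=a_0^{(l+1)/(l+1-\bar k)}$, and a constant from the lattice change of basis. Each of these is a monomial in $\xi_a$ and $a_0$.

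I expect this bookkeeping to be the main obstacle: verifying that the accumulated monomial becomes exactly $a_0^{d+1}\prod_a\hat\xi_a^{-\bar k}$ once everything is expressed through $\xi_a=s\hat\xi_a$. Several ingredients must combine correctly:
\begin{itemize}
\item homogeneity of $\prod_{a<b}(\xi_a-\xi_b)^{m_a+m_b}$, which has total degree $d\,(l+1)$;
\item the $\partial\log\xi$ Jacobian, which contributes $\prod\xi_a^2$;
\item the factor $\prod\xi_a^{m_a}$ from the constraint, which trades for a power of $a_0$.
\end{itemize}
I would first do this computation by tracking total $s$-degree. As a check I would compare against the generic stratum $d=l$, $m_a=1$, where the answer must agree with \eqref{eq:detbig}. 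The resulting constant is stratum dependent, which accounts for the $\propto$ in the statement.
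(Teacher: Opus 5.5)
Your proposal is correct and follows the paper's proof step for step. Both use the Cauchy determinant for $(\partial_{u_r}\xi_a)$, then eliminate the critical points via \eqref{eq:z-identity} and \eqref{eq:An-identities} to reach $\prod_a\xi_a^{m_a-\bar k-2}\prod_{a<b}(\xi_a-\xi_b)^{m_a+m_b}$, and finally change to linear coordinates. The step you flag as the obstacle closes in one line, as in the paper. Each $\log\xi_a$ is already linear in the linear coordinates on $D$, including $x_{l+1}$, so that Jacobian is a constant times $\prod_a\xi_a$ and the exponents rise to $m_a-\bar k$; the constraint enters only as the rewriting $\prod_a\hat\xi_a^{m_a}=1$, not as a Jacobian factor. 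Substituting $\xi_a=s\hat\xi_a$ then gives total $s$-degree $(l+1)-(d+1)\bar k+d(l+1)=(d+1)(l+1-\bar k)$, which is exactly $a_0^{d+1}$.
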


\begin{proof}
Using Lemmas \ref{lem:An-critical-derivative}--\ref{lem:An-canonical-coords}  gives \[\det\eta_D(\xi)=(\det B)^{-2}\det\eta_D(u)\] with $B=(\partial_{u_r}\xi_a)$. By Proposition~\ref{prop:An-derivative-formulas}, $B$ is a Cauchy matrix in the variables $(\xi_a,q_r)$ up to row and column rescalings, so its determinant is expressed by the Cauchy formula, and combining Lemma \ref{lem:An-canonical-coords} with the identity \[\prod_a\prod_{b\neq a}(\xi_a-\xi_b)^{m_a+1}=(-1)^{\sum_a (a-1)\,m_a-\frac{d(d+1)}{2}}\prod_{a<b}(\xi_a-\xi_b)^{m_a+m_b+2}\] yields \[\det\eta_D(\xi)\,\propto\,\prod_{a}\xi_a^{\;m_a-\bar k-2}\prod_{a<b}(\xi_a-\xi_b)^{\,m_a+m_b}\,.\] Each $\xi_a$ is a non-zero constant multiple of the exponential of a linear form on $D$, so the Jacobian matrix of $(\xi_1,\dots,\xi_{d+1})$ with respect to any frame of linear coordinates on $D$ has $a$-th row proportional to $\xi_a$, with constant coefficients; since the $\xi_a$ are independent, its determinant is a non-zero constant multiple of $\prod_a\xi_a$. The change to linear coordinates therefore multiplies the determinant by $\big(\prod_a\xi_a\big)^{2}$ up to a constant, shifting the exponents to $m_a-\bar k$.  Finally, substituting $\xi_a=\hat\xi_a\,a_0^{\frac{1}{l+1-\bar k}}$ and using $\prod_a\hat{\xi}_a^{m_a}=1$ together with $\sum_{a<b}(m_a+m_b)=d(l+1)$, the powers of $a_0$ assemble to $d+1$ and the assertion follows.
\end{proof}
Each factor $\hat{\xi}_a-\hat{\xi}_b$ is a unit multiple of $\mathsf{e}_{\beta|_D}$ for the corresponding root $\beta$ (a difference of two of the entries $a_i$), with exponent $n_{\beta,D}=m_a+m_b$. 
%(which for $A_l$ happens to equal $h(A_{m_a+m_b-1})$, a coincidence special to this type, not asserted in general). 
The factors $\hat{\xi}_a^{-\bar k}$ and the power of $a_0$ are units: the single finite pole of \eqref{eq:An-stratum-superpotential} is at $\hat{\mu}=0$, of order $\bar k$, away from all zeros. We reach the following conclusion.

\begin{cor}\label{cor:An-main-thm}
Theorem~\ref{thm:main} holds for $\RR=A_l$.
\end{cor}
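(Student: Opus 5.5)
The corollary follows once the output of \cref{thm:An-determinant} is rewritten in the form \eqref{eq:main-factorization}. The plan is to start from \eqref{eq:An-determinant-xframe},
\[
\det\eta_D\propto a_0^{\,d+1}\prod_a\hat\xi_a^{-\bar k}\prod_{a<b}(\hat\xi_a-\hat\xi_b)^{m_a+m_b},
\]
and then do two things: discard the unit factors, and identify each remaining factor with some $\mathsf{e}_{\beta|_D}$ for $\beta\in\RR_+\setminus\RR_D$.

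\emph{Units.} $a_0=\re^{d_{\bar k}x_{l+1}}$ is a unit. Each $\hat\xi_a$ is the common restriction to $D$ of some $a_i$, and every $a_i$ is the exponential of a weight of the vector representation $\re^{(\epsilon_i,x)}$. Hence $\hat\xi_a^{-\bar k}$ is also a unit, and the prefactor can be dropped modulo $\equiv$.

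\emph{Difference factors.} The roots of $A_l$ are $\epsilon_i-\epsilon_j$, and $a_i-a_j$ equals $\re^{(\epsilon_j,x)}$ times $\re^{(\epsilon_i-\epsilon_j)(x)}-1$. By the remark after \eqref{eq:trig-derivative}, this is a unit multiple of $\mathsf{e}_{\epsilon_i-\epsilon_j}$. Restricting to $D$, a root $\beta=\epsilon_i-\epsilon_j$ lies in $\RR_D$ exactly when $i$ and $j$ belong to the same coincidence block. If instead $i$ is in block $a$ and $j$ in block $b$ with $a\neq b$, then $\beta|_D$ depends only on the pair $(a,b)$: by \eqref{eq:trig-additivity}, adding roots of $\RR_D$ does not change the restriction. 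So $\hat\xi_a-\hat\xi_b\equiv\mathsf{e}_{\beta|_D}$ for every $\beta$ joining blocks $a$ and $b$.

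This gives the factorisation. Note that the product in \eqref{eq:main-factorization} runs over roots, not over restricted forms. So I would assign the exponent $m_a+m_b$ to one chosen representative $\beta$ for each pair $a<b$, and exponent $0$ to the other roots with the same restriction; equivalently, one may spread the exponent in any way across the $m_am_b$ roots of that class. In either case the $n_{\beta,D}$ are non-negative integers, which proves the corollary.

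The main obstacle is bookkeeping. The non-vanishing constant from \cref{thm:An-determinant} and the change of frame are harmless, since $\equiv$ allows units and $\propto$ allows constants. What needs checking is that the parametrisation of strata by coincidence patterns covers every $S\subset\Delta$, which is true because standard parabolic subsystems of $A_l$ are consecutive blocks. One should also confirm that the ordering of the blocks in \cref{sec:Al-case} introduces no spurious poles. The only pole is at $\hat\mu=0$, which contributes only a unit, as already remarked.
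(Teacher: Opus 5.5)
Your proposal is correct and follows the paper's own argument. You discard $a_0^{d+1}$ and $\hat\xi_a^{-\bar k}$ as units, and identify each $\hat\xi_a-\hat\xi_b$ with a unit multiple of $\mathsf{e}_{\beta|_D}$ for a root $\beta$ joining blocks $a$ and $b$, with exponent $m_a+m_b$; placing that exponent on a single representative of each restriction class is the right reading of the paper's ``$n_{\beta,D}=m_a+m_b$'', and it matches what the paper states explicitly in the $BCD$ case.
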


%\begin{example}[$A_3$, $\bar k=1$]\label{ex:A3-k1}
%For the stratum $a_3\mapsto a_1$ of $\lambda=a_0\,\mu^{-1}\prod_{i=1}^4(\mu-a_i)$, that is \[\lambda_D=\frac{\re^{x_{l+1}/2}(\mu-\re^{-x_1})^2(\mu-\re^{x_1-x_2})(\mu-\re^{x_1+x_2})}{\mu}\,,\] the residue formula \eqref{eq:residue-formulas} gives
%\begin{equation*}
%\det\eta_D(x)=-\tfrac{32}{81}\,(\re^{2x_2}-1)^2\,(\re^{2x_1+x_2}-1)^3\,(\re^{2x_1-x_2}-1)^3\,\re^{3 x_{l+1}/2-5x_1-2x_2},
%\end{equation*}
%in agreement with \eqref{eq:An-determinant-xframe} for $(m_1,m_2,m_3)=(2,1,1)$, $d=2$. 
%the multiplicities $3,3,2$ are $h(A_2),h(A_2),h(A_1)$, and the exponential prefactor is a unit.
%\end{example}

\subsection{Case II: $\RR\in\{B_l,C_l,D_l\}$}\label{sec:BCD-case}

\subsubsection{Superpotentials}

In the $x$-frame,
\begin{equation}\label{eq:BCD-superpotential-mu}
\lambda(\mu)=\frac{(-1)^{\bar{k}+1}\,\re^{d_{\bar k} x_{l+1}}}{\mu^{\bar{k}}(\mu+1)^{\ell_1}(\mu-1)^{\ell_2}}
\prod_{j=1}^{l}(\mu-a_j)(\mu-a_j^{-1}),
\end{equation}
with $(\bar{k},\ell_1,\ell_2)=(l-1,2,0)$, $(l,0,0)$, $(l-2,2,2)$ for $B_l$, $C_l$, $D_l$ respectively, 
%(canonical markings; non-canonical markings correspond to general values of these parameters), 
and $a_1=\re^{x_1}$, $a_i=\re^{x_i-x_{i-1}}$ generically, with the type-dependent modifications at the branch node. 
%The case $\RR=G_2$ is obtained from $\RR_=D_4$ by restricting to the triality-invariant locus $x_1=x_3=x_4$. 
\\

The substitution $\mu=\re^{2i\phi}$, $P=\cos\phi$ brings \eqref{eq:BCD-superpotential-mu} to the form of \cite{DubrovinStrachanZhangZuo2019},
\begin{equation}\label{eq:BCD-superpotential-P}
\lambda(P)=\frac{a_0}{(P^2-1)^{M}\,P^{2N_0}}\prod_{j=1}^{l}\big(P^2-p_j^2\big),
\qquad
p_j^2=\frac{a_j+a_j^{-1}+2}{4},
\quad l=\bar{k}+M+N_0,
\end{equation}
with $\ell_1=2N_0$, $\ell_2=2M$, and $a_0=4^{\bar{k}}(-1)^{\bar{k}+1}\re^{d_{\bar k} x_{l+1}}$. Since $p_j=\cosh(v_j/2)$ for $a_j=\re^{v_j}$, one has that $p_a^2-p_b^2$ is a unit multiple of $\mathsf{e}_{v_a-v_b}\mathsf{e}_{v_a+v_b}$ and $p_a^2-1$ a unit multiple of $\mathsf{e}_{v_a}^{2}$; this is the correspondence between the Landau–Ginzburg variables and the trigonometric forms of \cref{sec:killing-normalization}.
\subsubsection{Strata}

Vanishing of $\lambda$ at a critical point forces either $p_a= \pm p_i$ for some $i\neq a$, or $p_a=0$, or $p_a^2=1$, the last two meaning that a zero of $\lambda$ sits at a branch point of $\mu\mapsto P^2$. A stratum is therefore a pattern
\begin{equation*}
p_1=\dots=p_{m_0}=0,\qquad p_{m_0+1}^2=\dots=p_{m_0+n}^2=1,\qquad
\epsilon_\bullet p_{\bullet}=\xi_1\ (m_1\ \text{equations}),\ \dots,\
\epsilon_\bullet p_{\bullet}=\xi_d\ (m_d\ \text{equations}),
\end{equation*}
with $\epsilon_\bullet\in\{\pm1\}$, $m_0,n\ge0$, $\sum_{j=1}^{d}m_j=l-m_0-n$; $\dim D=d+1$ with coordinates $(a_0,\xi_1,\dots,\xi_d)$, and
\begin{equation}\label{eq:BCD-stratum-superpotential}
\lambda_D(P)=\frac{a_0}{(P^2-1)^{M-n}\,P^{2(N_0-m_0)}}\prod_{i=1}^{d}\big(P^2-\xi_i^2\big)^{m_i}.
\end{equation}
The locus so defined is a stratum of the discriminant, i.e. an intersection of $l-d$ independent mirrors, if and only if $\lambda_D$ vanishes at every coincidence, i.e. $m_0\in\{0\}\cup(N_0,l]$ and $n\in\{0\}\cup(M,l]$: a single zero colliding with a pole of $\lambda$ is not a discriminant condition, consistently with \eqref{eq:detbig}. Since $p_a^2=1$ means $v_a=0$, the strata $D_S$ with $S\subset\Delta$ of \cref{sec:intro-strata} have $m_0=0$, with $n>0$ exactly when $S\ni\alpha_l$ in types $B_l$, $C_l$ (short, resp.\ long, simple root) or $S\supseteq\{\alpha_{l-1},\alpha_l\}$ in type $D_l$; in type $C_l$, the translation $x\mapsto x+2\pi\mathrm{i}\,\omega^\vee_l$ in $\widetilde{\cW}^{(\bar k)}$ acts as $P^2\mapsto 1-P^2$ and exchanges the blocks at $P=0$ and $P^2=1$. It is convenient to treat all cases at once.

%\subsubsection{Critical data, canonical coordinates, product identities}

\begin{lem}\label{lem:BCD-critical-derivative}
There are $q_0,\dots,q_d\in\bbC$ such that
$$
\lambda_D'(P)=\frac{2\bar{k}\,a_0}{(P^2-1)^{M-n+1}P^{2(N_0-m_0)+1}}
\prod_{i=1}^{d}(P^2-\xi_i^2)^{m_i-1}\prod_{i=0}^{d}(P^2-q_i^2),
$$
and, writing
\begin{equation*}
\kappa_r\coloneqq\frac{4\bar{k}\,a_0\,(q_r^2-1)^{n-M}}{q_r^{2(N_0-m_0)}}
\prod_{i=1}^{d}(q_r^2-\xi_i^2)^{m_i-1}\prod_{i\neq r}(q_r^2-q_i^2),
\end{equation*}
one has $\kappa_r=(q_r^2-1)\lambda_D''(q_r)$ whenever $q_r\notin\{0,\pm1\}$. Generically on $D$, exactly one $q_r$ vanishes if $m_0=N_0$, and exactly one satisfies $q_r^2=1$ if $n=M$; in these two cases $\kappa_r=-2\lambda_D''(0)$, respectively $\kappa_r=2\lambda_D'(1)$.
\end{lem}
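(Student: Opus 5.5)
The plan is to compute $\lambda_D'$ by logarithmic differentiation of \eqref{eq:BCD-stratum-superpotential} and to organise the result as a polynomial in $s=P^2$. Write $\alpha\coloneqq M-n$ and $\beta\coloneqq N_0-m_0$. Then
\[
\frac{\lambda_D'}{\lambda_D}=-\frac{2\alpha P}{P^2-1}-\frac{2\beta}{P}+\sum_{i=1}^d\frac{2m_iP}{P^2-\xi_i^2}.
\]
Multiplying by $\lambda_D$ and clearing denominators gives
\[
\lambda_D'(P)=\frac{a_0\prod_i(P^2-\xi_i^2)^{m_i-1}}{(P^2-1)^{\alpha+1}P^{2\beta+1}}\,R(P^2).
\]
Here
\[
R(s)=-2\alpha\, s\prod_i(s-\xi_i^2)-2\beta\,(s-1)\prod_i(s-\xi_i^2)+\sum_i 2m_i\, s(s-1)\prod_{j\neq i}(s-\xi_j^2).
\]
This $R$ is a polynomial in $s$ of degree at most $d+1$. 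Its $s^{d+1}$ coefficient is $2\big(\sum_i m_i-\alpha-\beta\big)$. Using $\sum_i m_i=l-m_0-n$ and $l=\bar k+M+N_0$, this equals $2\bar k$. Factoring $R(s)=2\bar k\prod_{i=0}^d(s-q_i^2)$ over its roots, with square roots chosen arbitrarily, gives the first formula.

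For the second formula, take $q_r\notin\{0,\pm1\}$ and, generically on $D$, $q_r^2$ a simple root distinct from the $\xi_i^2$. Then $\lambda_D'$ has a simple zero at $P=q_r$. So $\lambda_D''(q_r)$ is obtained by differentiating only the factor $(P^2-q_r^2)$, which contributes $2q_r$, and evaluating everything else at $q_r$. Multiplying by $(q_r^2-1)$ lowers the power $(q_r^2-1)^{-(\alpha+1)}$ to $(q_r^2-1)^{n-M}$. The factor $2q_r/q_r^{2\beta+1}$ becomes $2/q_r^{2\beta}$. Together these reproduce $\kappa_r$ with its prefactor $4\bar k a_0$.

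The special cases come from evaluating $R$ at $s=0$ and $s=1$.
\begin{itemize}
\item At $s=0$ one finds $R(0)=2\beta\prod_i(-\xi_i^2)$. This vanishes exactly when $m_0=N_0$, and generically the root is simple, so exactly one $q_r=0$.
\item At $s=1$ one finds $R(1)=-2\alpha\prod_i(1-\xi_i^2)$. This vanishes exactly when $n=M$, again generically with exactly one $q_r^2=1$.
\end{itemize}

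In the first case, with $\beta=0$, the factor $P^2-q_r^2=P^2$ cancels the $P^{-1}$. Hence $\lambda_D'(P)=P\cdot G(P)$ with $G$ regular at $0$, and $\lambda_D''(0)=G(0)$. Comparing $G(0)$ with $\kappa_r$ at $q_r=0$, the only discrepancy is the factor $2$ and the sign $(-1)^{n-M}$ versus $(-1)^{-(M-n+1)}$, whose ratio is $-1$. This gives $\kappa_r=-2\lambda_D''(0)$.

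In the second case, with $\alpha=0$, the factor $(P^2-1)$ from $R$ cancels $(P^2-1)^{-1}$. So $\lambda_D'(1)$ is finite and equals $2\bar k a_0\prod_i(1-\xi_i^2)^{m_i-1}\prod_{i\neq r}(1-q_i^2)$. Comparing with $\kappa_r$ at $q_r^2=1$, where $(q_r^2-1)^{n-M}=1$, gives $\kappa_r=2\lambda_D'(1)$.

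The main obstacle is bookkeeping rather than any conceptual step: tracking signs and powers in the two degenerate evaluations, and justifying genericity. For the latter, I would check that for generic $\xi$ the roots of $R$ are simple and distinct from $0$, $1$ and the $\xi_i^2$ (apart from the forced roots just described). This follows by exhibiting a single point of $D$ where it holds, since the set where it fails is Zariski-closed in $D$.
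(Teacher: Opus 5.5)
Your proposal is correct and follows essentially the paper's own argument. Both use logarithmic differentiation of $\lambda_D$ and clear denominators to get a polynomial of degree $d+1$ in $P^2$ with leading coefficient $\bar k$ (your $R$ is twice the paper's numerator). Both then differentiate at a zero $q_r\notin\{0,\pm1\}$ and evaluate at $P=0$ and $P=1$ for the two degenerate nodes.

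Your explicit values $R(0)=2(N_0-m_0)\prod_i(-\xi_i^2)$ and $R(1)=-2(M-n)\prod_i(1-\xi_i^2)$ are just a more concrete form of the paper's cancellation remark. Also, the simplicity of the zero $q_r$ is not needed for $\kappa_r=(q_r^2-1)\lambda_D''(q_r)$. Writing $\lambda_D'=(P^2-q_r^2)\,g(P)$ with $g$ regular at $q_r$ already gives $\lambda_D''(q_r)=2q_r\,g(q_r)$, matching the statement's ``whenever''.
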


\begin{proof}
Logarithmic differentiation of \eqref{eq:BCD-stratum-superpotential} gives \[\lambda_D'(P)=2P\lambda_D\bigg[(m_0-N_0)P^{-2}+(n-M)(P^2-1)^{-1}+\sum_i m_i(P^2-\xi_i^2)^{-1}\bigg]\,.\] Over the common denominator the bracket has numerator an even polynomial of degree $2(d+1)$ in $P$ with leading coefficient $m_0-N_0+n-M+\sum_i m_i=\bar k$, whose roots are the $\pm q_i$. This is an identity of rational functions: if $m_0=N_0$ (resp.\ $n=M$) the factor $P^{-1}$ (resp.\ $(P^2-1)^{-1}$) is cancelled by one factor of the last product, which is the statement about vanishing $q_r$ (resp.\ $q_r^2=1$). The relations between $\kappa_r$ and the derivatives of $\lambda_D$ follow by differentiating the displayed formula once more at a simple zero $q_r\neq0$ of $\lambda_D'$, or once at $P=0$, or by evaluating it at $P=1$.
\end{proof}

We call $q_0,\dots,q_d$ the \emph{nodes} of $\lambda_D$: they are the critical points of $\lambda_D$ on $\overline{C}_y$ in the sense of \eqref{eq:residue-formulas}, the node at $P=0$ (resp.\ $P^2=1$) being the branch point $\mu=-1$ (resp.\ $\mu=1$) when $\lambda_D$ is regular there. With $u_i=\lambda_D(q_i)$, $i=0,\dots,d$, one has $\partial_{u_i}\lambda_D(P)|_{P=q_j}=\delta_{ij}$ as in \cref{sec:Al-case}, and Lagrange interpolation on even polynomials of degree $2d$ in $P$ gives:

%\begin{prop}
\beq
\label{eq:BCD-derivative-formula}
\partial_{u_r}\lambda_D(P)=\frac{2P\,(P^2-1)}{(P^2-q_r^2)}\,\frac{\lambda_D'(P)}{\kappa_r}.
\eeq
%\end{prop}

\begin{prop}\label{prop:BCD-jacobi-matrix}
We have, for $a\neq 0$ and $r=0,\dots, d$,
\begin{equation*}
\partial_{u_r}\xi_a=\frac{2\,\xi_a\,(\xi_a^2-1)}{(q_r^2-\xi_a^2)\,\kappa_r}\,,
\qquad
\frac{\partial_{u_r}a_0}{a_0}=\frac{\delta_{r0}}{\lambda_D(q_0)}
+\frac{4}{\kappa_r}\sum_{j=1}^{d}\frac{m_j\,\xi_j^2\,(\xi_j^2-1)}{(q_0^2-\xi_j^2)(q_r^2-\xi_j^2)}\,.
\end{equation*}
\end{prop}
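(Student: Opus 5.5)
The plan is to mirror the proof of Proposition~\ref{prop:An-derivative-formulas}: compute the $u_r$-derivative of \eqref{eq:BCD-stratum-superpotential} logarithmically, and compare it with the closed form \eqref{eq:BCD-derivative-formula}, which we take as given. At fixed $P$,
\[
\frac{\partial_{u_r}\lambda_D(P)}{\lambda_D(P)}=\frac{\partial_{u_r}a_0}{a_0}-\sum_{j=1}^d\frac{2m_j\,\xi_j\,\partial_{u_r}\xi_j}{P^2-\xi_j^2}.
\]
Using the logarithmic form of $\lambda_D'$ from the proof of Lemma~\ref{lem:BCD-critical-derivative}, the right-hand side of \eqref{eq:BCD-derivative-formula} divided by $\lambda_D$ becomes an explicit rational function of $P^2$:
\[
\frac{4P^2(P^2-1)}{\kappa_r(P^2-q_r^2)}\Big[\frac{m_0-N_0}{P^2}+\frac{n-M}{P^2-1}+\sum_i\frac{m_i}{P^2-\xi_i^2}\Big].
\]
Both sides are therefore rational in $z=P^2$, and we compare their partial-fraction expansions.

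\textbf{The $\xi_a$-formula.} Take residues at $z=\xi_a^2$. On the left this gives $-2m_a\xi_a\,\partial_{u_r}\xi_a$. On the right it gives
\[
\frac{4\xi_a^2(\xi_a^2-1)m_a}{\kappa_r(\xi_a^2-q_r^2)}.
\]
Dividing by $-2m_a\xi_a$ yields $\partial_{u_r}\xi_a=2\xi_a(\xi_a^2-1)/((q_r^2-\xi_a^2)\kappa_r)$, as claimed. This assumes $\xi_a^2\neq q_r^2$, which holds generically since the $q_r$ are critical points distinct from the zeros.

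\textbf{The $a_0$-formula.} The coefficient $\partial_{u_r}a_0/a_0$ is the constant term of the left side. The natural route is to evaluate the identity at a suitable point. Evaluating at $z=q_0^2$ and using $\partial_{u_r}\lambda_D(q_0)=\delta_{r0}$ gives
\[
\frac{\partial_{u_r}a_0}{a_0}=\frac{\delta_{r0}}{\lambda_D(q_0)}+\sum_j\frac{2m_j\xi_j\,\partial_{u_r}\xi_j}{q_0^2-\xi_j^2}.
\]
Substituting the $\xi_j$-formula just obtained turns the sum into $\frac{4}{\kappa_r}\sum_j \frac{m_j\xi_j^2(\xi_j^2-1)}{(q_0^2-\xi_j^2)(q_r^2-\xi_j^2)}$, which is exactly the stated expression.

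\textbf{Main obstacle.} The delicate point is justifying the evaluation at $P=q_0$ and the interpolation property in the degenerate cases $m_0=N_0$ or $n=M$. There a node sits at $0$ or $\pm1$, so $\lambda_D''(q_r)$ is replaced by the modified $\kappa_r$ of Lemma~\ref{lem:BCD-critical-derivative}, and the point $z=0$ or $z=1$ is no longer a pole of $\lambda_D$. I would first check that \eqref{eq:BCD-derivative-formula} remains valid there as an identity of rational functions, by continuity in the generic parameters. I would also choose $q_0$ to be a node with $q_0^2\notin\{0,1,\xi_j^2\}$ (relabeling if necessary) and confirm that $\lambda_D(q_0)\neq0$ on the generic locus, so the evaluation is legitimate.

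The residue comparison at $z=\xi_a^2$ is insensitive to these issues, because $\xi_a^2\neq 0,1$ generically. Consistency of the remaining poles, at $z=0$, $z=1$ and $z=q_r^2$, is not needed for the statement, though it could serve as a check.
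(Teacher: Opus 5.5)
Your argument is correct and is essentially the paper's proof: logarithmic differentiation of \eqref{eq:BCD-stratum-superpotential} compared with \eqref{eq:BCD-derivative-formula}, then evaluation at the node $q_0$ using $\partial_{u_r}\lambda_D(q_0)=\delta_{r0}$. Your residue at $z=P^2=\xi_a^2$ is the same computation as the paper's division by $(P^2-\xi_a^2)^{m_a-1}$ and evaluation at $P=\xi_a$ via the limit relation. You can drop the proposed relabeling to a node with $q_0^2\notin\{0,1\}$: it is not always possible (in type $C_l$ with $d=1$ the only two nodes are $P=0$ and $P^2=1$), and it is not needed, because at such a node $\lambda_D$ is regular, the node does not move, and $\lambda_D(q_0)$ is generically non-zero, so the evaluation goes through unchanged.
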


\begin{proof}
The first expression follows from  logarithmic differentiation of \eqref{eq:BCD-stratum-superpotential} in $u_r$ (the blocks at $P=0$ and $P^2=1$ do not move), comparison with \eqref{eq:BCD-derivative-formula}, division by $(P^2-\xi_a^2)^{m_a-1}$ and evaluation at $P=\xi_a$, using the limit relation \[\frac{\lambda_D}{(P^2-\xi_a^2)^{m_a}}\bigg|_{\xi_a}=\frac{\lambda_D'}{2m_a\xi_a(P^2-\xi_a^2)^{m_a-1}}\bigg|_{\xi_a}\,.\] The second expression arises from  evaluation at the node $P=q_0$ (any node may be labelled $0$; $\lambda_D(q_0)\neq0$ generically on $D$), using $\partial_{u_r}\lambda_D(P)|_{P=q_0} = \delta_{r0}$ and substituting in the already established expression for $\partial_{u_r}\xi_j$. 
%For $a=0$: logarithmic differentiation and evaluation at a critical point, inserting the case $a\neq0$.
\end{proof}

\begin{lem}%[canonical coordinates]
In the canonical chart for $D$, we have
\label{lem:BCD-canonical-coords}
\[\eta_D(\partial_{u_i},\partial_{u_j})=\frac{2\,(-1)^{\bar{k}}\,\delta_{ij}}{\kappa_i},
%\qquad
%\partial_{u_i}\cdot\partial_{u_j}=\delta_{ij}\,\partial_{u_j}.
\]
\end{lem}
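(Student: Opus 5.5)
The plan is to mirror the proof of Lemma~\ref{lem:An-canonical-coords}. We substitute the derivative formula \eqref{eq:BCD-derivative-formula} for $\partial_{u_i}\lambda_D$ and $\partial_{u_j}\lambda_D$ into the residue formula \eqref{eq:residue-formulas}, with $(\varsigma,\phi,\epsilon_\RR)=(P,P^2-1,(-1)^{\bar k})$. The integrand becomes
\[
\frac{4P^2(P^2-1)^2\,\lambda_D'(P)^2}{(P^2-q_i^2)(P^2-q_j^2)\,\kappa_i\kappa_j}\cdot\frac{1}{(P^2-1)\lambda_D'(P)}
=\frac{4P^2(P^2-1)\,\lambda_D'(P)}{(P^2-q_i^2)(P^2-q_j^2)\,\kappa_i\kappa_j}.
\]
We then insert the factorised form of $\lambda_D'$ from Lemma~\ref{lem:BCD-critical-derivative}. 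In this form the factor $(P^2-q_i^2)(P^2-q_j^2)$ in the denominator is cancelled by the product $\prod_r(P^2-q_r^2)$ whenever $i\neq j$. So for $i\neq j$ the integrand is regular at every node $\pm q_s$ (in the generic situation $q_s\notin\{0,\pm1\}$), and the off-diagonal entries vanish.

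For $i=j$, only the poles at $P=\pm q_i$ contribute, and these are simple. At $P=q_i$ the residue equals
\[
\frac{4q_i^2(q_i^2-1)\lambda_D'(P)}{(P^2-q_i^2)^2\kappa_i^2}\Big|_{\text{Res}}\, ,
\]
which, using $\lambda_D'(P)\approx\lambda_D''(q_i)(P-q_i)$ and $P^2-q_i^2\approx 2q_i(P-q_i)$, gives
\[
\frac{(q_i^2-1)\lambda_D''(q_i)}{\kappa_i^2}=\frac{1}{\kappa_i},
\]
by Lemma~\ref{lem:BCD-critical-derivative}. The point $P=-q_i$ gives the same contribution, since the integrand is odd in $P$ and $\mathrm{d}P$ flips sign. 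Summing over the two preimages of the node on $\overline{C}_y$ therefore produces $2/\kappa_i$, and multiplying by $\epsilon_\RR=(-1)^{\bar k}$ gives the claimed formula. I will fix this residue-summation convention so that it matches the normalisation of \eqref{eq:residue-formulas}, which was set by \cite[Chap.~7.3]{karoth}.

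The main obstacle is the degenerate nodes, namely $q_r=0$ when $m_0=N_0$ and $q_r^2=1$ when $n=M$. At these nodes the cancellation pattern changes and the pole structure of the integrand at $P=0$ or $P=\pm1$ has to be recomputed. For these I would work directly with the formula for $\lambda_D'$ in Lemma~\ref{lem:BCD-critical-derivative}, in which the corresponding factor $P^{-1}$ or $(P^2-1)^{-1}$ is cancelled by $(P^2-q_r^2)$. The relations $\kappa_r=-2\lambda_D''(0)$ and $\kappa_r=2\lambda_D'(1)$ are exactly what is needed for the single (branch-point) residue to reproduce $2/\kappa_r$; this is where the factor $2$ in the statement is uniform across cases. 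For the off-diagonal vanishing I would alternatively argue by analytic continuation from the generic locus, since both sides are rational in the stratum coordinates.
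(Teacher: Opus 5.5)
Your proposal is essentially the paper's proof: substitute \eqref{eq:BCD-derivative-formula} into \eqref{eq:residue-formulas}, observe that the off-diagonal integrand is regular at the nodes, collect $1/\kappa_i$ from each of $P=\pm q_i$, and treat the special nodes via $\kappa_r=-2\lambda_D''(0)$ and $\kappa_r=2\lambda_D'(1)$.

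One correction concerns those special nodes. Only the node at $P=0$ gives a single residue, namely $-4\lambda_D''(0)/\kappa_r^2=2/\kappa_r$. At the node $P^2=1$ the diagonal integrand $4P^2\lambda_D'/\big((P^2-1)\kappa_r^2\big)$ has simple poles at both $P=\pm1$, each with residue $2\lambda_D'(1)/\kappa_r^2=1/\kappa_r$, exactly as in the paper; a single residue there would give only half.

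Your analytic-continuation fallback for the off-diagonal terms is not available. In types $B$ and $C$ these nodes sit at $P=0$ or $P^2=1$ identically on the stratum, so there is no generic locus to continue from. It is also not needed: the cancellation against $\prod_s(P^2-q_s^2)$ is an identity of rational functions, and it leaves an integrand regular at $P=0,\pm1$.
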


\begin{proof}
As in Lemma~\ref{lem:An-canonical-coords}, this follows from using \eqref{eq:BCD-derivative-formula} and the residue formulas \eqref{eq:residue-formulas} with $\phi=P^2-1$, and $\epsilon_{\RR}=(-1)^{\bar{k}}$ supplies the sign. For a node $q_i\notin\{0,\pm1\}$ the residues at $P=\pm q_i$ contribute $1/((q_i^2-1)\lambda_D''(q_i))=1/\kappa_i$ each. For the node at $P=0$ ($m_0=N_0$) the single residue at $P=0$ is $-1/\lambda_D''(0)=2/\kappa_0$. For the node at $P^2=1$ ($n=M$), $\lambda_D'(\pm1)\neq0$ and the integrand has simple poles at $P=\pm1$ from $\phi$, each with residue $1/(2\lambda_D'(1))=1/\kappa_i$. Finally, the last two evaluations use Lemma~\ref{lem:BCD-critical-derivative}.
%; the trilinear computation and Proposition~\ref{prop:star-restriction} give the multiplication rule.
\end{proof}
The following lemma will be useful for obtaining the subsequent determinant of Theorem \ref{thm:BCD-determinant}. 

\begin{lem}\label{lem:BCD-product-identities}
We have
\begin{equation*}
\prod_{i=0}^{d}q_i^2=\frac{m_0-N_0}{\bar{k}}\prod_{a=1}^{d}\xi_a^2,
\qquad
\prod_{i=0}^{d}(q_i^2-1)=\frac{M-n}{\bar{k}}\prod_{a=1}^{d}(\xi_a^2-1),
\end{equation*}
\begin{equation*}
\prod_{i=0}^{d}(\xi_a^2-q_i^2)=\frac{\xi_a^2\,m_a(\xi_a^2-1)}{\bar{k}}\prod_{i\neq a}(\xi_a^2-\xi_i^2).
\end{equation*}
Furthermore,
\[\frac{\prod_{i=0}^{d}\kappa_i}{\prod_{i<j}(q_i^2-q_j^2)^2}
=C\,a_0^{d+1}\prod_{a=1}^{d}\xi_a^{\,2(m_a-1-N_0+m_0)}\,(\xi_a^2-1)^{m_a-1-M+n}
\prod_{a}\prod_{i\neq a}(\xi_a^2-\xi_i^2)^{m_a-1},
\]
with $C$ an explicit non-zero constant.
\end{lem}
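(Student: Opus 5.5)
The plan mirrors the proof of Lemma~\ref{lem:An-canonical-coords}. Everything rests on the identity of rational functions behind Lemma~\ref{lem:BCD-critical-derivative}. Write $Q=P^2$ and
\[
F(Q)\coloneqq Q(Q-1)\prod_{i=1}^{d}(Q-\xi_i^2)\Big[\frac{m_0-N_0}{Q}+\frac{n-M}{Q-1}+\sum_{i}\frac{m_i}{Q-\xi_i^2}\Big]=\bar k\prod_{i=0}^{d}(Q-q_i^2).
\]
This is a polynomial of degree $d+1$ in $Q$. Each of the first three identities comes from evaluating both sides of this equality at one special point.

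At $Q=0$ only the term $(m_0-N_0)/Q$ survives on the left. This gives
\[
(m_0-N_0)(-1)\prod_i(-\xi_i^2)=\bar k\prod_i(-q_i^2).
\]
Counting signs, $(-1)^{d+1}$ appears on both sides, so we obtain the first identity.

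At $Q=1$ only the $(n-M)$ term survives. This gives $(n-M)\prod_i(1-\xi_i^2)=\bar k\prod_i(1-q_i^2)$, and the same sign count produces $(M-n)/\bar k$.

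At $Q=\xi_a^2$ only the $m_a$ term survives. This gives $\xi_a^2(\xi_a^2-1)m_a\prod_{i\neq a}(\xi_a^2-\xi_i^2)=\bar k\prod_i(\xi_a^2-q_i^2)$, which is the third identity.

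The degenerate cases $m_0=N_0$ or $n=M$ need no separate treatment. In those cases one $q_r^2$ equals $0$ (resp.\ $1$) and both sides of the corresponding identity vanish, consistently with Lemma~\ref{lem:BCD-critical-derivative}.

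For the final identity, I substitute the definition of $\kappa_i$ from Lemma~\ref{lem:BCD-critical-derivative} and take the product over $i$. Three pieces appear.

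First, $\prod_i\prod_{j\neq i}(q_i^2-q_j^2)=\pm\prod_{i<j}(q_i^2-q_j^2)^2$. This cancels the denominator up to a sign, leaving
\[
(4\bar k a_0)^{d+1}\prod_i(q_i^2-1)^{n-M}\prod_i q_i^{-2(N_0-m_0)}\prod_i\prod_a(q_i^2-\xi_a^2)^{m_a-1}.
\]

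Second, the three product identities just proved convert each remaining factor into a function of the $\xi$'s:
\begin{itemize}
\item $\prod_i q_i^2$ contributes $\prod_a\xi_a^{2(m_0-N_0)}$ times a constant;
\item $\prod_i(q_i^2-1)$ contributes $\prod_a(\xi_a^2-1)^{n-M}$ times a constant;
\item $\prod_i(q_i^2-\xi_a^2)=\pm\prod_i(\xi_a^2-q_i^2)$ contributes $\xi_a^{2(m_a-1)}(\xi_a^2-1)^{m_a-1}\prod_{i\neq a}(\xi_a^2-\xi_i^2)^{m_a-1}$ times $(m_a/\bar k)^{m_a-1}$.
\end{itemize}

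Third, I collect exponents. For $\xi_a$ this gives $2(m_a-1)+2(m_0-N_0)$, and for $(\xi_a^2-1)$ it gives $(m_a-1)+(n-M)$, matching the claim. The constant $C$ is then $(4\bar k)^{d+1}$ times powers of $(m_0-N_0)/\bar k$, $(M-n)/\bar k$, $m_a/\bar k$, and a sign.

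The main obstacle is the degenerate cases for $C$. If $m_0=N_0$, the factor $((m_0-N_0)/\bar k)^{-(N_0-m_0)}$ has exponent zero, so it should be read as $1$ and not as $0^0$ ambiguity. Still, I will need to check that no vanishing $q_r$ causes trouble in $\kappa_r$; there the exponent $N_0-m_0=0$ removes the $q_r$ power. The case $n=M$ is handled the same way, since the exponent $n-M=0$. Thus $C$ is non-zero in all cases.

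Keeping track of the global sign is routine bookkeeping, because only non-vanishing of $C$ is asserted.
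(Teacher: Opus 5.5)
Your proposal is correct and follows the paper's proof. The paper likewise obtains the first three identities by evaluating the bracket numerator $\bar k\prod_i(P^2-q_i^2)$ at $P^2=0$, $1$ and $\xi_a^2$, and obtains the last by substituting the definition of $\kappa_i$ together with these identities, handling $m_0=N_0$ and $n=M$ through the vanishing exponent exactly as you do. Your explicit form of $C$ is a sign times $(4\bar k)^{d+1}$ and powers of $(m_0-N_0)/\bar k$, $(M-n)/\bar k$ and $m_a/\bar k$, which also makes its non-vanishing transparent.
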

\begin{proof}
The first three formulae are obtained by comparing the numerator of the bracket in the proof of Lemma~\ref{lem:BCD-critical-derivative} with $\bar k\prod_i(P^2-q_i^2)$ at $P=0$, $P=1$, and $P=\xi_a$, respectively. The final expression arises by writing $\kappa_i$ as in
Lemma~\ref{lem:BCD-critical-derivative} and using the three identities already established; when $m_0=N_0$ (resp.\ $n=M$) the first (resp.\ second) identity reads $0=0$ and is not needed, the corresponding power of $\prod_i q_i^2$ (resp.\ $\prod_i(q_i^2-1)$) in $\prod_i\kappa_i$ being zero.
\end{proof}

\subsubsection{The determinant}

\begin{thm}\label{thm:BCD-determinant}
%In the coordinates $(a_0,\xi_1,\dots,\xi_d)$ on $D$, 
In the frame of linear coordinates on $D$ we have
%converted to the $x$-frame by a unit Jacobian factor $\alpha$,
\begin{equation}\label{eq:BCD-determinant}
\det\eta_D
=\alpha\,a_0^{\,d+1}\,
\prod_{a=1}^{d}\xi_a^{\,2(m_a+m_0-N_0)}\,\big(\xi_a^2-1\big)^{m_a+n-M}
\prod_{1\le a<b\le d}\big(\xi_a^2-\xi_b^2\big)^{m_a+m_b},
\end{equation}
where $\alpha$ is a unit independent of $a_0$.
\end{thm}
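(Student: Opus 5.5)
I will follow the proof of Theorem~\ref{thm:An-determinant}, passing from the canonical chart to the linear chart on $D$ in two steps. First I go from the canonical coordinates $(u_0,\dots,u_d)$ to the Landau--Ginzburg coordinates $(a_0,\xi_1,\dots,\xi_d)$. Then I go from these to linear coordinates. Lemma~\ref{lem:BCD-canonical-coords} gives
\[\det\eta_D(u)=\pm 2^{d+1}\prod_i\kappa_i^{-1}.\]
Hence
\[\det\eta_D(a_0,\xi)=(\det B)^{-2}\det\eta_D(u),\]
where $B$ is the $(d+1)\times(d+1)$ matrix with rows indexed by $r$ and columns $(\partial_{u_r}a_0,\partial_{u_r}\xi_1,\dots,\partial_{u_r}\xi_d)$. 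Its entries are given by Proposition~\ref{prop:BCD-jacobi-matrix}.

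The main step is computing $\det B$.

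\emph{Column reduction.} The $a_0$-column has the form
\[\frac{\delta_{r0}a_0}{u_0}+\frac{4a_0}{\kappa_r}\sum_j\frac{m_j\xi_j^2(\xi_j^2-1)}{(q_0^2-\xi_j^2)(q_r^2-\xi_j^2)}.\]
The sum is a linear combination of the $\xi_j$-columns, which are
\[\frac{2\xi_j(\xi_j^2-1)}{(q_r^2-\xi_j^2)\kappa_r}.\]
The coefficients $2m_j\xi_j a_0/(q_0^2-\xi_j^2)$ do not depend on $r$. Subtracting this combination replaces the $a_0$-column by $\delta_{r0}a_0/u_0$. Expanding along that column gives
\[\det B=\pm\frac{a_0}{u_0}\det B',\]
where $B'$ is the $d\times d$ matrix with rows $r=1,\dots,d$.

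\emph{Cauchy determinant.} After pulling out the row factors $1/\kappa_r$ and the column factors $2\xi_a(\xi_a^2-1)$, $B'$ is a Cauchy matrix in $q_r^2$ and $\xi_a^2$. Its determinant is
\[\frac{\prod_{1\le r<s\le d}(q_r^2-q_s^2)\prod_{a<b}(\xi_a^2-\xi_b^2)\cdot(\pm1)}{\prod_{r\ge1,\,a}(q_r^2-\xi_a^2)}.\]
I expect this to be the main obstacle. It leaves the asymmetric factors $1/u_0$ and $\kappa_0$ together with the products over $r\ge1$, and they must recombine into the symmetric quantity of Lemma~\ref{lem:BCD-product-identities}. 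My plan is to write
\[\prod_{r\ge1}(q_r^2-\xi_a^2)=\frac{\prod_{r\ge0}(q_r^2-\xi_a^2)}{q_0^2-\xi_a^2},\qquad \prod_{1\le r<s}(q_r^2-q_s^2)=\frac{\prod_{r<s}(q_r^2-q_s^2)}{\prod_{s\ge1}(q_0^2-q_s^2)}.\]
I then expect the leftover $q_0$-dependent factors to combine exactly into $u_0\kappa_0^{-1}$, up to a constant. The reason is that $\kappa_0$ contains $\prod_{i\ne0}(q_0^2-q_i^2)\prod_a(q_0^2-\xi_a^2)^{m_a-1}$, and $u_0=\lambda_D(q_0)$ supplies $a_0\prod_a(q_0^2-\xi_a^2)^{m_a}$ together with the powers of $q_0$ and $q_0^2-1$. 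If this fails, I would instead check it on the rank-one case $d=0$, and for general $d$ use the symmetry of $\det B$ in the nodes, since relabelling which node is $q_0$ is allowed.

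\emph{Assembly.} With $\det B$ in symmetric form, I substitute the three product identities of Lemma~\ref{lem:BCD-product-identities} for $\prod_{i,a}(\xi_a^2-q_i^2)$. I then insert the formula there for $\prod_i\kappa_i/\prod_{i<j}(q_i^2-q_j^2)^2$. Collecting exponents should give
\[a_0^{d+1}\prod_a\xi_a^{2(m_a+m_0-N_0)-2}(\xi_a^2-1)^{m_a+n-M-2}\prod_{a<b}(\xi_a^2-\xi_b^2)^{m_a+m_b},\]
where the factor $a_0^{d+1}$ comes from $a_0^{d+1}$ in the $\kappa$-product and $a_0^{-2}$ in $(\det B)^{-2}$.

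\emph{Change to linear coordinates.} Finally I pass from $(a_0,\xi_a)$ to linear coordinates. Since $a_0$ is a unit multiple of $\re^{d_{\bar k}x_{l+1}}$ and $\xi_a=\cosh(v/2)$ for a linear form $v$ on $D$, the Jacobian is triangular. Its determinant is a unit times $a_0\prod_a\xi_a'$, and $\xi_a'\propto\sinh(v/2)$ satisfies $(\xi_a')^2\propto\xi_a^2-1$. Squaring contributes $a_0^2\prod_a(\xi_a^2-1)$ up to a constant. The remaining discrepancy of $\xi_a^{2}(\xi_a^2-1)$ per $a$ should be absorbed by the column factors $\xi_a(\xi_a^2-1)$ in $\det B'$, which enter squared in the denominator. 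I would track this bookkeeping carefully, fixing the net power of $a_0$ to be $d+1$ via its $r$-independence, exactly as in Theorem~\ref{thm:An-determinant}. All leftover constants and exponentials of $x_{l+1}$ independent of $a_0$ are collected into the unit $\alpha$.
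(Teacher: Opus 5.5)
Your proposal follows the paper's proof step for step. You reduce the $a_0$-column to $\delta_{r0}a_0/u_0$ using the $r$-independent coefficients $2m_j\xi_ja_0/(q_0^2-\xi_j^2)$, apply the Cauchy formula to the remaining $d\times d$ block, insert Lemma~\ref{lem:BCD-product-identities}, and finish with the Jacobian to linear coordinates. The step you flagged as the main obstacle works exactly as you anticipated. Directly from the definitions, $\prod_a(q_0^2-\xi_a^2)\big/\prod_{i\neq0}(q_0^2-q_i^2)=4\bar k\,u_0/\kappa_0$, so the factor $1/u_0$ cancels and
\[
\det B=c\,a_0\prod_{r=0}^{d}\kappa_r^{-1}\prod_{a=1}^{d}\xi_a(\xi_a^2-1)\,\frac{\prod_{r<s}(q_r^2-q_s^2)\prod_{a<b}(\xi_a^2-\xi_b^2)}{\prod_{r,a}(q_r^2-\xi_a^2)},\qquad c\neq0,
\]
is symmetric in the nodes. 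The fallback is unnecessary.

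The proposal goes wrong in the final count, and the patch you offer for it is invalid. Your assembled expression has the wrong power of each of $a_0$, $\xi_a$ and $\xi_a^2-1$. For $a_0$ it even contradicts your own accounting, since $a_0^{d+1}\cdot a_0^{-2}=a_0^{d-1}$. Here is the correct count.
\begin{itemize}
\item Squaring the third identity of Lemma~\ref{lem:BCD-product-identities} gives $\prod_{r,a}(q_r^2-\xi_a^2)^2\propto\prod_a\xi_a^4(\xi_a^2-1)^2\prod_a\prod_{b\neq a}(\xi_a^2-\xi_b^2)^2$.
\item The column rescalings contribute $\prod_a\xi_a^{-2}(\xi_a^2-1)^{-2}$ to $(\det B)^{-2}$.
\item Combine these with the $\kappa$-identity and with $\prod_a\prod_{b\neq a}(\xi_a^2-\xi_b^2)^{m_a+1}=\pm\prod_{a<b}(\xi_a^2-\xi_b^2)^{m_a+m_b+2}$.
\end{itemize}
The correct intermediate is then
\[
\det\eta_D(a_0,\xi)\propto a_0^{\,d-1}\prod_{a}\xi_a^{\,2(m_a+m_0-N_0)}\big(\xi_a^2-1\big)^{m_a+n-M-1}\prod_{a<b}\big(\xi_a^2-\xi_b^2\big)^{m_a+m_b}.
\]
This is the paper's statement that one obtains the target up to the factor $a_0^{-2}\prod_a(\xi_a^2-1)^{-1}$. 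Your squared Jacobian $a_0^2\prod_a(\xi_a^2-1)$, which you computed correctly, then yields the theorem exactly, with nothing left to absorb.

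The absorption you propose could not have worked anyway. The column factors $2\xi_a(\xi_a^2-1)$ are already inside $\det B$, and in $(\det B)^{-2}$ they enter as $\xi_a^{-2}(\xi_a^2-1)^{-2}$. They lower exponents, so they cannot supply a missing $\xi_a^2(\xi_a^2-1)$. Nor is the power of $a_0$ fixed by any $r$-independence argument; it is simply $(d+1)-2+2$. No idea is missing, but the bookkeeping must be redone as above for the argument to close.
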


\begin{proof}
As in Theorem~\ref{thm:An-determinant}, $\det\eta_D(\xi)=(\det B)^{-2}\det\eta_D(u)$ with $\det\eta_D(u)=\prod_i 2(-1)^{\bar k}\kappa_i^{-1}$ by Lemma~\ref{lem:BCD-canonical-coords}. The row of $B$ corresponding to $a_0$ (Proposition~\ref{prop:BCD-jacobi-matrix}) is not of Cauchy shape, but subtracting from it the combination $\sum_{j}2m_j\xi_j(q_0^2-\xi_j^2)^{-1}\,(\partial_{u_r}\xi_j)_r$ of the remaining rows leaves the single non-zero entry $a_0/u_0$ in the column $r=0$, whence
\begin{equation*}
\det B=\frac{a_0}{u_0}\,\det\big(\partial_{u_i}\xi_a\big)_{1\le a\le d,\;1\le i\le d},
\end{equation*}
the latter being a Cauchy determinant in the variables $(\xi_a^2,q_i^2)_{i\neq0}$ up to rescaling the rows by $2\xi_a(\xi_a^2-1)$ and the columns by $\kappa_i^{-1}$. By the Cauchy formula,
\begin{equation*}
(\det B)^{-2}\det\eta_D(u)=c\,
\frac{u_0^2\prod_{i\neq0}(q_0^2-q_i^2)^2\prod_{i\neq0}\prod_a(q_i^2-\xi_a^2)^2 a_0^{2}\prod_{i=0}^d\kappa_i}{\kappa_0^2\prod_{i<j}(q_i^2-q_j^2)^2 \prod_a\xi_a^2(\xi_a^2-1)^2\prod_{a<b}(\xi_a^2-\xi_b^2)^2}
\end{equation*}
for a non-zero constant $c$. Using the definitions of $u_0=\lambda_D(q_0)$ and $\kappa_0$ and assembling with Lemma \ref{lem:BCD-product-identities} and the sign identity used in Theorem~\ref{thm:An-determinant} yields \eqref{eq:BCD-determinant} up to the factor $a_0^{-2}\prod_a(\xi_a^2-1)^{-1}$, which is exactly supplied by the passage from the coordinates $(a_0,\xi)$ to the $x$-frame: $x_{l+1}\mapsto a_0$ has Jacobian proportional to $a_0$, and the change of variables $\xi_j=\cosh(v_j/2)$ (up to sign) has triangular Jacobian with diagonal entries proportional to $(\xi_j^2-1)^{1/2}$.
\end{proof}

We see that $\xi_a^2-\xi_b^2\equiv\mathsf{e}_{v_a-v_b}\mathsf{e}_{v_a+v_b}$ carries the
mirrors $\{v_a=\pm v_b\}$ with total exponent $m_a+m_b$, exactly as at a generic wall in the finite
Coxeter-group setting. As far as the remaining factors are concerned, since $p_j=c_{v_j}$, we have the following equivalences:
%$\mathsf{e}_{2\gamma}=2\,\mathsf{e}_\gamma c_\gamma$,
\[
\xi_a^2-1\equiv\mathsf{e}_{v_a}^{2},\qquad
\xi_a\equiv c_{v_a},\qquad
\xi_a^2\big(\xi_a^2-1\big)\equiv\mathsf{e}_{2v_a}^{2}\,.
\]
% $c_{v_a}$, vanishing on the component $\{\re^{v_a}=-1\}$ of $\{\mathsf{e}_{2v_a}=0\}$, is not a
%unit. 
Write $k_a:=m_a+m_0-N_0$ and $j_a:=m_a+n-M$ for the two exponents in
\eqref{eq:BCD-determinant}, the pole-free value $m_a$ corrected by the order of $\lambda_D$ along
$P=0$ and along $P^2=1$; for the canonical markings $M,N_0\le1$, so both are non-negative, and
$j_a-k_a=N_0-M-m_0+n$. Pairing $\min(j_a,k_a)$ copies of $\xi_a^2$ with as many of $\xi_a^2-1$,
\[
\xi_a^{2k_a}\big(\xi_a^2-1\big)^{j_a}\;\equiv\;
\mathsf{e}_{2v_a}^{\,2\min(j_a,k_a)}\cdot
\begin{cases}
\mathsf{e}_{v_a}^{\,2(j_a-k_a)}, & j_a\ge k_a,\\[2pt]
c_{v_a}^{\,2(k_a-j_a)}, & k_a\ge j_a,
\end{cases}
\]
which for the linear strata ($m_0=0$) gives $\mathsf{e}_{2v_a}^{2(m_a-1)}\mathsf{e}_{v_a}^{2(n+1)}$, $\mathsf{e}_{2v_a}^{2m_a}\mathsf{e}_{v_a}^{2n}$
and $\mathsf{e}_{2v_a}^{2(m_a-1)}\mathsf{e}_{v_a}^{2n}$ in types $B_l$, $C_l$, $D_l$ respectively, and $\mathsf{e}_{2v_a}^{2m_a}c_{v_a}^{2m_0}$ for the translates of the long-root strata in type $C_l$ ($n=0$) mentioned above. Each factor is of the form 
$\mathsf{e}_\beta|_D$ for a transverse root $\beta$, and all identifications below hold up to a sign
and hence up to a unit. Writing $i,i'$ for indices among the $m_a$ with $p_\bullet^2=\xi_a^2$:
$\mathsf{e}_{v_a}=\mathsf{e}_{\epsilon_i}$ occurs only in type $B$, where $\epsilon_i$ is a short root, and the $2n$ roots $\epsilon_i\pm\epsilon_j$ with $p_j^2=1$, i.e. $v_j=0$, each restrict to $v_a$, accounting for $\mathsf{e}_{v_a}^{2n}$;
$\mathsf{e}_{2v_a}$ is the long root $2\epsilon_i$ in type $C$, while in types $B,D$ its exponent is
non-zero only for $m_a\ge2$, and then whichever of $\epsilon_i\pm\epsilon_{i'}$, $i\neq i'$, avoids
$\RR_D$ restricts to $2v_a$; and the $2m_0$ roots $\epsilon_i\pm\epsilon_j$ with $p_j=0$ each restrict
to $c_{v_a}$, since $\re^{v_j}=-1$ there, accounting for $c_{v_a}^{2m_0}$. As
\eqref{eq:main-factorization} constrains only the total exponent at each restricted root, exhibiting
one such $\beta$ per factor suffices. The prefactors $a_0^{\,d+1}$ and $\alpha$ are units. We have
deduced the following 
\begin{cor}\label{cor:BCD-main-thm}
Theorem~\ref{thm:main} holds for $\RR \in \{B_l,C_l,D_l\}$.
\end{cor}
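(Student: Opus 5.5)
The corollary follows by assembling Theorem~\ref{thm:BCD-determinant} with the dictionary between Landau--Ginzburg variables and trigonometric forms. Theorem~\ref{thm:main} only requires that $\det\eta_D$ be, modulo units, a product of non-negative powers of restricted forms $\mathsf{e}_{\beta|_D}$ with $\beta\in\RR_+\setminus\RR_D$. I will therefore check three things: that every factor in \eqref{eq:BCD-determinant} is a unit or such a restricted form, that every exponent is non-negative, and that no stray non-unit factor survives.

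\emph{Units.} By \eqref{eq:BCD-superpotential-P}, $a_0$ is a constant multiple of $\re^{d_{\bar k}x_{l+1}}$, so $a_0^{d+1}$ is a unit. The factor $\alpha$ is a unit by the statement of Theorem~\ref{thm:BCD-determinant}.

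\emph{Mixed factors.} Take $S\subset\Delta$, so that $m_0=0$ and $D$ is a linear stratum. Each $\xi_a$ equals $\pm\cosh(v_a/2)$, where $v_a$ is a linear form on $D$. Hence
\[
\xi_a^2-\xi_b^2\equiv\mathsf{e}_{v_a-v_b}\,\mathsf{e}_{v_a+v_b}.
\]
The forms $v_a\mp v_b$ are restrictions of roots $\epsilon_i\mp\epsilon_j$, with $i$ in block $a$ and $j$ in block $b$, and these roots are not in $\RR_D$ because $a\neq b$. So these factors carry the exponent $m_a+m_b\ge0$.

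\emph{Single-block factors.} For the factor $\xi_a^{2k_a}(\xi_a^2-1)^{j_a}$, I use $\xi_a^2-1\equiv\mathsf{e}_{v_a}^2$ and $\xi_a^2(\xi_a^2-1)\equiv\mathsf{e}_{2v_a}^2$. With the canonical values of $(M,N_0)$ and $m_0=0$, this gives $\mathsf{e}_{2v_a}^{2(m_a-1)}\mathsf{e}_{v_a}^{2(n+1)}$, $\mathsf{e}_{2v_a}^{2m_a}\mathsf{e}_{v_a}^{2n}$ and $\mathsf{e}_{2v_a}^{2(m_a-1)}\mathsf{e}_{v_a}^{2n}$ in types $B$, $C$, $D$ respectively. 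All these exponents are non-negative since $m_a\ge1$. The only potential non-unit outside the form $\mathsf{e}_\beta$ is a power of $c_{v_a}$, which arises only when $k_a>j_a$. That case requires $m_0>0$, which does not occur for $S\subset\Delta$.

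\emph{Root realisations.} Each remaining trigonometric form must be realised as a restricted root. In type $B$, $v_a$ is the restriction of the short root $\epsilon_i$. In type $C$, $2v_a$ is the restriction of the long root $2\epsilon_i$. When $n>0$, $v_a$ is also the restriction of $\epsilon_i\pm\epsilon_j$ with $v_j=0$. When $m_a\ge2$ in types $B$ and $D$, $2v_a$ is the restriction of $\epsilon_i+\epsilon_{i'}$ (or of $\epsilon_i-\epsilon_{i'}$, according to signs), with $i,i'$ in the same block; this is the one of the two not lying in $\RR_D$. Because \eqref{eq:main-factorization} constrains only the total exponent at each restricted form, producing one such $\beta$ per factor suffices.

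\emph{Main obstacle.} The delicate step is the type-$D$ bookkeeping near the branch nodes, namely when $S\supseteq\{\alpha_{l-1},\alpha_l\}$ and when $m_a=1$. There I must confirm two things. First, that $\mathsf{e}_{v_a}^{2n}$ is realised by $\epsilon_i\pm\epsilon_j$ with $v_j=0$, so that no short root is needed. Second, that the $m_a-1$ exponent of $\mathsf{e}_{2v_a}$ vanishes exactly when no appropriate root exists. Once these checks are done, Theorem~\ref{thm:main} follows for $\RR\in\{B_l,C_l,D_l\}$.
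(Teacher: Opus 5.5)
Your proposal is correct and follows the paper's own argument essentially step for step. Specifically: $a_0^{d+1}$ and $\alpha$ are units, $\xi_a^2-\xi_b^2\equiv\mathsf{e}_{v_a-v_b}\mathsf{e}_{v_a+v_b}$, the single-block factors reduce via $\xi_a^2-1\equiv\mathsf{e}_{v_a}^2$ and $\xi_a^2(\xi_a^2-1)\equiv\mathsf{e}_{2v_a}^2$ to the same type-by-type exponents, and one transverse root is exhibited per factor. The checks you flag as the main obstacle are already handled by your root-realisation paragraph: the exponent $2(m_a-1)$ vanishes when $m_a=1$, and the $\epsilon_i\pm\epsilon_j$ with $v_j=0$ are roots in all three types. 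Restricting to $m_0=0$ is legitimate, since Theorem~\ref{thm:main} only concerns $S\subset\Delta$.
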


%\begin{rmk}\label{rem:residue-general}
%The residue method applies to any meromorphic superpotential; in particular an analogous derivation is expected to hold for the generalised extensions of \cite{Zuo2020,MaZuo2022}.
%\end{rmk}

\section{Exceptional series}

%This part establishes the factorisation of $\det\eta_D$ described in Theorem~\ref{thm:main} for strata of codimension one, two, and three (the last under the simply laced hypothesis), uniformly in the Dynkin type, together with explicit formulas for the determinant in terms of Jacobian minors. The argument follows the strategy introduced in \cite[§6--7]{AntoniouFS:2020} for finite Coxeter groups, developed here from first principles in the trigonometric setting. The letters $\lambda,\nu,\theta$ denote simple roots in Section 10. We do not attempt to identify the resulting exponents $n_{\beta,D}$ with Coxeter numbers of root subsystems: such an identification, while true for several of the classical strata treated in Part I and for the mirrors within $\mathcal{R}_D$ itself, does not hold for a general transverse root $\beta$, as counterexamples show.

\subsection{Preliminaries}

%\subsubsection{Jacobian minors}
\subsubsection{$\RR=G_2$}
Our study in the previous section immediately implies that \cref{thm:main} holds for $\RR=G_2$. Recall that the LG superpotential for $\RR=G_2$ is obtained from the case $\RR=D_4$ upon restriction to the triality invariant locus $\tilde x_1=\tilde x_3=\tilde x_4$. As a bonus, the arguments of \cref{thm:BCD-determinant,cor:BCD-main-thm}, applied verbatim to this case, yield the following
\begin{cor}
\cref{thm:main} holds for $\RR=G_2$.
\end{cor}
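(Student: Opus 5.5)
The plan is to view the $G_2$ Frobenius manifold as the triality-invariant submanifold of the $D_4$ one, and then restrict the $D_4$ residue computation, or more precisely redo it verbatim. By the remark preceding the statement, the LG superpotential of $\cM$ for $\RR=G_2$ is the restriction of \eqref{eq:BCD-superpotential-mu} (type $D_4$, $\bar k=2$, $(\ell_1,\ell_2)=(2,2)$) to the locus $\tilde x_1=\tilde x_3=\tilde x_4$. On that locus the four zeros $p_j^2$ of \eqref{eq:BCD-superpotential-P} are constrained: the torus coordinates $a_j$ are expressed through the two $G_2$ coordinates, and the residue formula \eqref{eq:residue-formulas} with $(\varsigma,\phi)=(P,P^2-1)$ still computes $\eta$. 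I would first list the strata $D_S$, $S\subset\Delta(G_2)=\{\alpha_1,\alpha_2\}$. For $S=\Delta$ we have $D$ equal to the $x_{l+1}$-line, where $\det\eta_D$ is a unit. So only the two codimension-one strata $S=\{\alpha_1\}$ and $S=\{\alpha_2\}$ need treatment, together with the trivial case $S=\emptyset$, which is \eqref{eq:detbig}.

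For each codimension-one stratum, the next step is to identify the corresponding coincidence pattern $(m_0,n;m_1,\dots,m_d)$ of \cref{sec:BCD-case} satisfied by the restricted superpotential. A $G_2$ root restricts to a $D_4$ root, or to an orbit of $D_4$ roots under triality, so a $G_2$ mirror imposes several simultaneous $D_4$ coincidences. The resulting $\lambda_D$ then has exactly the shape \eqref{eq:BCD-stratum-superpotential}, with the $\xi_a$ restricted further by the triality constraint. The point is that Lemmas \ref{lem:BCD-critical-derivative}--\ref{lem:BCD-product-identities} and the proof of Theorem~\ref{thm:BCD-determinant} use only this shape of $\lambda_D$ and the independence of its parameters $(a_0,\xi_a)$ on $D$. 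I would therefore parametrise $D$ by the distinct independent $\xi_a$ together with $a_0$. On the triality-invariant locus some $\xi$'s of the $D_4$ pattern coincide, which simply merges blocks and increases the multiplicities $m_a$. Running the canonical-coordinate computation and the Cauchy determinant unchanged then gives \eqref{eq:BCD-determinant} with the merged multiplicities.

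Finally, I would convert to the linear frame, exactly as at the end of the proof of Theorem~\ref{thm:BCD-determinant}, and read off each factor. The factors $\xi_a^2-\xi_b^2$, $\xi_a^2-1$ and $\xi_a$ are unit multiples of products of $\mathsf{e}_{v}$, $\mathsf{e}_{2v}$ and $c_v$ for linear forms $v$ on $D$. Each such $v$ is the restriction of a $D_4$ root, and therefore of a $G_2$ root $\beta\notin\RR_D$. The factors $c_{v}$ arise only when $m_0\neq0$, which does not happen for linear strata. What remains are non-negative powers of $\mathsf{e}_{\beta|_D}$, as required by \eqref{eq:main-factorization}.

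The main obstacle is justifying that the triality restriction preserves the genericity assumptions. Concretely, the $d+1$ nodes $q_i$ must remain distinct and must avoid $\xi_a$ and $0,\pm1$ generically on $D$. In addition, the merged blocks must give an independent coordinate system on $D$, so that the Cauchy matrix $B$ is invertible. I would check this directly on the two codimension-one strata, where $\lambda_D$ is explicit and $d$ is at most $1$. If the check fails for one of the strata, the fallback is to compute $\eta_D$ directly from the $2\times2$ residue matrix.
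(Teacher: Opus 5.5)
Your route is the paper's. Its proof is only the assertion that the $D_4$ superpotential, restricted to the triality-fixed plane, can be run verbatim through \cref{thm:BCD-determinant}, and the difficulty below is not addressed there either.

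The mechanism you give for why this works, that triality only merges $\xi$-blocks, is false. On one of the two codimension-one strata the Cauchy-matrix argument genuinely breaks, so your fallback is not optional.

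In the weights $v_j$ of the vector representation ($a_j=\re^{v_j}$, Bourbaki labelling), the triality-fixed plane is $\{v_4=0,\ v_1=v_2+v_3\}$. The first condition makes $P^2-p_4^2$ cancel the pole $(P^2-1)^{M}$, $M=1$. The second is a relation among $p_1,p_2,p_3$ that is not a coincidence. Put $a=v_2$, $b=v_3$, so that $\alpha_1=a$ is short and $\alpha_2=b-a$ is the long, marked root.

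\textbf{The short-root stratum is fine.} $\Pi_{\alpha_1}=\{v_2=v_4=0,\ v_1=v_3\}$ is an honest $D_4$ stratum of the same dimension ($n=2$, $m_1=2$, $d=1$). There \eqref{eq:BCD-determinant} applies and gives $\det\eta_D\equiv\mathsf{e}_b^4\,\mathsf{e}_{2b}^2$.

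\textbf{The long-root stratum is where it fails.} $\Pi_{\alpha_2}=\{v_2=v_3=a,\ v_1=2a,\ v_4=0\}$ gives, with $z=P^2$ and $c=\cosh a$,
\[
\lambda_D=a_0\,z^{-1}\big(z-\xi_1^2\big)^2\big(z-\xi_2^2\big),\qquad \xi_1^2=\tfrac{1+c}{2},\qquad \xi_2^2=c^2 .
\]
So $\xi_2=2\xi_1^2-1$: nothing merges, and $(a_0,\xi_1,\xi_2)$ is not a coordinate system on the two-dimensional $D$. Correspondingly $\lambda_D$ has three nodes: $z=-c/2$, $z=c(c+1)/2$, and the branch point $z=1$, which is no longer a pole. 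The last two have identically equal critical values $a_0(1+c)(1-c)^3/4$. Consequences:
\begin{itemize}
\item the matrix $B$ of \cref{thm:BCD-determinant} is $2\times 3$, so there is no Cauchy determinant to evaluate;
\item \cref{lem:BCD-canonical-coords} does not apply, since the metric entry of the merged canonical coordinate is a sum of two residues;
\item the formula you would write down carries $a_0^{d+1}=a_0^{3}$ on a two-dimensional stratum.
\end{itemize}

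The fallback does work, but it is needed. Write $u_\pm$ for the two canonical coordinates on $\Pi_{\alpha_2}$ and sum both residues for the merged one. You get
\[
\eta(\partial_{u_-},\partial_{u_-})\,\eta(\partial_{u_+},\partial_{u_+})\propto a_0^{-2}(c+2)^{-4}(1+2c)^{-1}(1-c)^{-2},
\]
and $\det\partial(u_-,u_+)/\partial(a_0,c)\propto a_0(1+2c)^2(1-c)^2(c+2)^2$. The non-cyclotomic factor $c+2$ cancels only in the product. In the linear frame,
\[
\det\eta_D\;\propto\;a_0^2\,\sinh^2\!a\,(1+2\cosh a)^3(1-\cosh a)^2\;\equiv\;\mathsf{e}_a\,\mathsf{e}_{2a}^2\,\mathsf{e}_{3a}^3 ,
\]
as \cref{thm:main} requires.

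A shorter proof bypasses the Landau--Ginzburg model altogether. \cref{thm:codim1-factorization} is proved for arbitrary $\RR$. In rank two, $J_m$ is the single entry $\partial_{\alpha_{m'}}t'$ ($m'\neq m$), where $t'$ is a unit multiple of $Y_1+\mathrm{const}$ and $Y_1$ is the seven-dimensional character. Differentiating its six short-root exponentials gives
\[
J_2|_{\Pi_{\alpha_2}}\propto\sinh a\,(1+2\cosh a)\equiv\mathsf{e}_{2a}\mathsf{e}_{3a}/\mathsf{e}_a,
\qquad
J_1|_{\Pi_{\alpha_1}}\equiv\mathsf{e}_{2b}.
\]
Inserted into \eqref{eq:codim1-factorization}, these reproduce both results above.
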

\subsubsection{Jacobian minors and the contravariant Saito metric}

Throughout this section we differentiate in the coordinates $(\tilde x_1,\dots,\tilde x_l;x_{l+1})$ of \eqref{eq:tildex-coords}, for which $\partial/\partial\tilde x_i=\partial_{\alpha_i}$, and we abbreviate $\tilde x_{l+1}:=x_{l+1}$. Moreover,  Fourier monomials continue to be written as $\re^{\mu(x)}$, that is, with exponents in the frame $(x_1,\dots,x_l)$ of \cref{sec:killing-normalization}, so that $\mu=\sum_i n_i\alpha_i$ contributes $\re^{\sum_i n_ix_i}$. The two frames are related by the constant matrix $\partial x_i/\partial\tilde x_j=(\alpha_i,\alpha_j)$, so the Gram determinant of $\eta$ computed in the $\tilde x$-frame and in the linear chart $(x_1,\dots,x_l;x_{l+1})$ of \cref{sec:intro-strata} differ by the non-zero constant $\det\big((\omega^\vee_i,\omega^\vee_j)\big)^{2}$, which is immaterial for \eqref{eq:main-factorization}.

Since $t_{l+1}=x_{l+1}$ by \eqref{eq:killing-normalization}, the row of the Jacobian matrix $T=(\partial t_i/\partial\tilde x_j)_{i,j\le l+1}$ corresponding to $t_{l+1}$ equals $(0,\dots,0,1)$. The cofactor expansion along this row gives $\det T=\pm \,J$, where
\begin{equation*}
J:=\det\big(\partial_{\alpha_j}t_i\big)_{1\le i,j\le l}
\end{equation*}
is the determinant of the finite block. In the following we order the flat coordinates so that $t_{\bar k} = t_1$ occupies the first position among $t_1,\dots,t_l$ to be compatible with the anti-diagonal form \eqref{eq:killing-normalization} of the Gram matrix of the Saito metric, and let $J_m$ denote the minor of the finite block obtained by deleting the first row and $m$-th column.

\begin{prop}\label{prop:cramer-representation}
The identity field has no component in the extended direction, and
\begin{equation}\label{eq:cramer-representation}
e \;=\; \sum_{i=1}^{l}e^{i}\,\partial_{\alpha_i},
\qquad
e^{i}=-(-1)^{i}\,J^{-1}J_i .
\end{equation}
\end{prop}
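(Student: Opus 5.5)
The plan is to compute $e=\partial_{t_{\bar k}}=\partial_{t_1}$ in the chart $(\tilde x_1,\dots,\tilde x_l;x_{l+1})$ by inverting the Jacobian matrix $T=(\partial t_i/\partial\tilde x_j)_{i,j\le l+1}$ with Cramer's rule. By the chain rule,
\[
e=\partial_{t_1}=\sum_{j=1}^{l+1}\frac{\partial\tilde x_j}{\partial t_1}\,\frac{\partial}{\partial\tilde x_j}
=\sum_{j=1}^{l+1}(T^{-1})_{j1}\,\frac{\partial}{\partial\tilde x_j},
\]
and we have $\partial/\partial\tilde x_j=\partial_{\alpha_j}$ for $j\le l$ by \eqref{eq:tildex-coords}. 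This is valid on the complement of the discriminant, where $\det T=\pm J\neq0$ by \eqref{disc}.

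\emph{Step 1 (no extended component).} The $j=l+1$ coefficient is $\partial x_{l+1}/\partial t_1$, computed with $t_2,\dots,t_{l+1}$ held fixed. Since $x_{l+1}=t_{l+1}$ by \eqref{eq:killing-normalization}, this coefficient vanishes identically. Equivalently, $(T^{-1})_{l+1,1}$ is a cofactor of $T$ in which the row $(0,\dots,0,1)$ of $t_{l+1}$ survives while the last column is deleted. That submatrix therefore has a zero row, so the cofactor is $0$.

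\emph{Step 2 (Cramer's rule for $j\le l$).} We have $(T^{-1})_{j1}=(-1)^{1+j}M_{1j}/\det T$, where $M_{1j}$ is the minor of $T$ obtained by deleting row $1$ (the row of $t_1=t_{\bar k}$) and column $j$. The submatrix defining $M_{1j}$ still contains the row $(0,\dots,0,1)$ of $t_{l+1}$, now as its last row, with the entry $1$ in its last column. Expanding along that row picks up the sign $(-1)^{l+l}=+1$ and leaves the finite-block matrix with row $1$ and column $j$ deleted, that is, $M_{1j}=J_j$. The same expansion applied to $T$ itself gives $\det T=J$ with sign $+1$, for the same reason. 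The entries $\partial t_i/\partial x_{l+1}$ of the last column, which need not vanish, play no role in either expansion. Hence
\[
e^j=(-1)^{1+j}J^{-1}J_j=-(-1)^jJ^{-1}J_j ,
\]
which is \eqref{eq:cramer-representation}.

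The only point requiring care, and the place where an error is most likely, is the sign bookkeeping. It depends on the chosen ordering, in which $t_{\bar k}$ is placed first and $t_{l+1}$ last, on the extended row sitting in the last position in both expansions, and on the convention $\det T=+J$ (rather than $\pm J$) under that ordering. I would check this once on $\RR=A_1$ by direct computation. Nothing else is needed beyond \eqref{eq:killing-normalization} and \eqref{disc}.
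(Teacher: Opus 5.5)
Your proof is correct and is essentially the paper's argument, since both are Cramer's rule. The paper defines the candidate field $X=\sum_i(-1)^{1+i}J^{-1}J_i\,\partial_{\alpha_i}$ and checks $X(t_j)=\delta_{j1}$ by Laplace expansion with a repeated row of the $l\times l$ finite block, whereas you invert the full $(l+1)\times(l+1)$ Jacobian via the adjugate and reduce each cofactor to $J_j$ by expanding along the row $(0,\dots,0,1)$ of $t_{l+1}$; your sign bookkeeping ($(-1)^{l+l}=+1$ in the minors and $\det T=+J$ under the stated ordering) is right, so $e^j=(-1)^{1+j}J^{-1}J_j$ follows as you claim.
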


\begin{proof}
Define $X:=\sum_i(-1)^{1+i}J^{-1}J_i\,\partial_{\alpha_i}$. For $j\le l$, $j\neq\bar k$, the quantity $\sum_i(-1)^{1+i}J_i\,\partial_{\alpha_i}t_j$ is the Laplace expansion, along its first row, of the determinant of the matrix obtained from the finite block by replacing the row of $t_{\bar k}=t_1$ with a second copy of the row of $t_j$; having two equal rows, this determinant vanishes, so $X(t_j)=0$. For $j=\bar k$ the same expansion reproduces $J$ itself, so $X(t_{\bar k})=1$. Finally $X(t_{l+1})=0$ since $\partial_{\alpha_i}t_{l+1}=0$. As $X$ contains no $\partial_{x_{l+1}}$, and the vector field $\partial_{t_{1}}$ likewise has vanishing $x_{l+1}$-component, the equality $X=\partial_{t_{1}}=e$ follows from agreement on the coordinate system $(t_1,\dots,t_{l+1})$.
\end{proof}

%\subsection{The contravariant Saito metric}

\begin{prop}\label{prop:contravariant-metric}
In the coordinates $\tilde x_i$, the finite components of the contravariant Saito metric are
\begin{equation}\label{eq:contravariant-metric}
\eta^{ij}
=-\,\partial_{\omega^\vee_j}e^i-\partial_{\omega^\vee_i}e^j
=(-1)^{j}\,\partial_{\omega^\vee_i}\frac{J_j}{J}
+(-1)^{i}\,\partial_{\omega^\vee_j}\frac{J_i}{J},
\qquad 1\le i,j\le l.
\end{equation}
\end{prop}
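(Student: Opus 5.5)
The plan is to obtain the formula from Dubrovin's relation between the Saito metric and the intersection form, $\eta^{\alpha\beta}=\mathcal{L}_e\,\gamma^{\alpha\beta}$. This relation holds on any Frobenius manifold. In our case the identity is $e=\partial_{t_{\bar k}}$ and $\gamma$ is the intersection form described in \cref{sec:killing-normalization}. The point is that $\gamma$ is flat with constant Gram matrix in the linear chart, so its Lie derivative involves only derivatives of the components of $e$. Those components are in turn given explicitly by \cref{prop:cramer-representation}.

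\emph{Step 1: $\gamma$ in the $\tilde x$-frame.} The intersection form is the Killing form on $\mathfrak h$ plus a constant on the extended factor, so it is block diagonal. Its finite block has constant entries $\gamma(\partial_{\alpha_i},\partial_{\alpha_j})=(\alpha_i,\alpha_j)$, because $\partial/\partial\tilde x_i=\partial_{\alpha_i}$ by \eqref{eq:tildex-coords}. The finite block of the inverse is therefore $\gamma^{ij}=(\omega^\vee_i,\omega^\vee_j)$, the inverse of the Cartan-type Gram matrix. Any overall normalisation constant is fixed by the convention \eqref{eq:killing-normalization}, and I expect it to be absorbed there.

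\emph{Step 2: Lie derivative.} For a contravariant $2$-tensor with constant components in a chart,
\[
(\mathcal L_e\gamma)^{ij}=e^k\partial_k\gamma^{ij}-\gamma^{kj}\partial_k e^i-\gamma^{ik}\partial_k e^j
=-\sum_k\gamma^{jk}\partial_{\alpha_k}e^i-\sum_k\gamma^{ik}\partial_{\alpha_k}e^j .
\]
By \eqref{eq:coweight-derivative}, $\sum_k(\omega^\vee_j,\omega^\vee_k)\partial_{\alpha_k}=\partial_{\omega^\vee_j}$, so the right-hand side equals $-\partial_{\omega^\vee_j}e^i-\partial_{\omega^\vee_i}e^j$. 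This is the first equality in \eqref{eq:contravariant-metric}. There is no contamination from the extended direction, for two reasons: $\gamma$ has no mixed components, and by \cref{prop:cramer-representation} $e$ has no $\partial_{x_{l+1}}$-component. Hence $k$ runs only over $1,\dots,l$.

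\emph{Step 3: substitute the Cramer formula.} Insert $e^i=-(-1)^iJ_i/J$ from \eqref{eq:cramer-representation}. This gives $-\partial_{\omega^\vee_j}e^i=(-1)^i\partial_{\omega^\vee_j}(J_i/J)$, and symmetrically in $i,j$, which yields the second equality.

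\emph{Main obstacle.} The substantive point is justifying $\eta^{\alpha\beta}=\mathcal L_e\gamma^{\alpha\beta}$ in the present conventions, including its sign and normalisation. I would invoke it from \cite{DZ1} and \cite{MR2070050}, where $\gamma^{\alpha\beta}=E^\epsilon c_\epsilon^{\alpha\beta}$ and $\mathcal L_e\gamma=\eta$ is standard. I would then check the sign against the normalisation \eqref{eq:killing-normalization}, for instance through the $(l+1,\bar k)$-type entries or a rank-one case such as $A_1$.
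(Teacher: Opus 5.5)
Your proposal is correct and follows the paper's proof essentially step for step. The paper likewise writes $(\mathcal{L}_e\gamma)^{ij}$ for the constant, block-diagonal $\gamma$ in the $\tilde x$-chart, and uses $\gamma^{aj}=(\omega^\vee_a,\omega^\vee_j)$ with \eqref{eq:coweight-derivative} to obtain $-\partial_{\omega^\vee_j}e^i-\partial_{\omega^\vee_i}e^j$. It then substitutes \eqref{eq:cramer-representation}, and it simply takes the relation $\eta=\mathcal{L}_e\gamma$ from \cite{DZ1,MR2070050} for granted, so your proposed sign check is more care than the paper itself takes.
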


\begin{proof}
For the Lie derivative of the contravariant tensor $\gamma$ in the coordinates $\tilde x$,
\begin{equation*}
(\mathcal{L}_e\gamma)^{ij}=e\big(\gamma^{ij}\big)-\gamma^{aj}\,\partial_{\tilde x_a}e^i-\gamma^{ia}\,\partial_{\tilde x_a}e^j .
\end{equation*}
The components of $e$ in the $\tilde x$-system are $e^i$ ($i\le l$) and $0$ ($i=l+1$) by Proposition~\ref{prop:cramer-representation} and \eqref{eq:tildex-coords}. The tensor $\gamma$ is constant in $\tilde x$ and block-diagonal, so the first term vanishes and the sums over $a$ run over $a\le l$ for $i,j\le l$. In the frame $\{\partial_{\alpha_a}\}$ the covariant components of $\gamma$ are $\gamma_{ab}=(\alpha_a,\alpha_b)$, hence the contravariant components in the coordinates $\tilde x$ are $\gamma^{aj}=(\omega^\vee_a,\omega^\vee_j)$, and by \eqref{eq:coweight-derivative}, $\gamma^{aj}\partial_{\tilde x_a}=\partial_{\omega^\vee_j}$. The second equality in \eqref{eq:contravariant-metric} is \eqref{eq:cramer-representation}.
\end{proof}

%\subsection{The restriction of $\eta$ to a stratum}

\begin{lem}\label{lem:stratum-coords}
For $I\subset\{1,\dots,l\}$ and $D=\bigcap_{q\in I}\Pi_{\alpha_q}$, the functions $\tilde x_i$ ($i\notin I$) together with $x_{l+1}$ restrict to a coordinate system on $D\times\mathbb{C}$.
\end{lem}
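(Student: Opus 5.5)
The claim is linear algebra. For $I\subset\{1,\dots,l\}$ the stratum $D=\bigcap_{q\in I}\Pi_{\alpha_q}$ is the linear subspace of $\mathfrak{h}$ cut out by the independent equations $x_q=(\alpha_q,x)=0$, $q\in I$. So $D$ has dimension $l-|I|$, and $D\times\bbC$ has dimension $l-|I|+1$. Since the functions $\tilde x_i$ and $x_{l+1}$ are linear, it suffices to show that the $l-|I|+1$ linear forms $\{\tilde x_i\}_{i\notin I}\cup\{x_{l+1}\}$ restrict to linearly independent forms on $D\times\bbC$. Equivalently, they should separate points there, i.e.\ their common kernel in $D\times\bbC$ should be zero.

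The coordinate $x_{l+1}$ lives on the $\bbC$ factor and the $\tilde x_i$ live on $\mathfrak{h}$, so the extended direction splits off. It therefore remains to show that $\{\tilde x_i|_D\}_{i\notin I}$ is a basis of $D^*$.

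We write $x=\sum_i x_i\,\omega^\vee_i$. On $D$ we have $x_q=0$ for $q\in I$, so $D$ is spanned by $\{\omega^\vee_j\}_{j\notin I}$, and the $x_j$, $j\notin I$, are coordinates on $D$. In these coordinates,
\[
\tilde x_i\big|_D=(\omega^\vee_i,x)=\sum_{j\notin I}(\omega^\vee_i,\omega^\vee_j)\,x_j,\qquad i\notin I .
\]
The change of coordinates from $(x_j)_{j\notin I}$ to $(\tilde x_i)_{i\notin I}$ is therefore given by the principal submatrix $G_{I^c}=\big((\omega^\vee_i,\omega^\vee_j)\big)_{i,j\notin I}$ of the Gram matrix of the fundamental coweights. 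This is the only point needing care. I would settle it by noting that the Killing form is positive definite on the real span of the coroots. Hence the full Gram matrix of the linearly independent vectors $\omega^\vee_1,\dots,\omega^\vee_l$ is positive definite, so every principal submatrix is positive definite, and in particular $\det G_{I^c}\neq0$.

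Thus $\{\tilde x_i|_D\}_{i\notin I}$ is obtained from a coordinate system on $D$ by an invertible constant linear map, and is itself a coordinate system on $D$. Adjoining $x_{l+1}$ gives coordinates on $D\times\bbC$. As a by-product, the Gram determinants of $\eta_D$ in these coordinates and in any linear chart on $D$ differ by the non-zero constant $\det(G_{I^c})^{2}$, which is immaterial for \eqref{eq:main-factorization}.
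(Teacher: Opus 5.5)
Your proof is correct and follows essentially the same route as the paper: you identify $D$ with the span of $\{\omega^\vee_j\}_{j\notin I}$ and reduce the independence of the restricted $\tilde x_i$ to nondegeneracy of the principal Gram submatrix $\big((\omega^\vee_i,\omega^\vee_j)\big)_{i,j\notin I}$. Your appeal to positive definiteness of the Killing form on the real span is the precise justification needed to pass to a principal submatrix, and is slightly sharper than the paper's wording that the form is merely nondegenerate on the real span.
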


\begin{proof}
One has $D=\langle\omega^\vee_i:i\notin I\rangle$, since $(\omega^\vee_i,\alpha_q)=0$ for $i\notin I$, $q\in I$, and the dimensions agree. A linear relation $\sum_{i\notin I}a_i\tilde x_i=0$ on $D$, evaluated at the points $x=\omega^\vee_j$ ($j\notin I$), reads $\sum_i a_i(\omega^\vee_i,\omega^\vee_j)=0$ for all $j$; the Gram matrix of the coweights $\{\omega^\vee_i\}_{i\notin I}$ is nondegenerate (they are linearly independent and the form is nondegenerate on the real span), so $a_i=0$.
\end{proof}

\begin{prop}
%[Jacobi minor identity]
\label{prop:jacobi-minor-identity}
Let $M$ be an invertible symmetric $(l+1)\times(l+1)$ matrix, $I\subset\{1,\dots,l+1\}$, $T$ its complement. Then
\begin{equation*}
\det\big((M^{-1})_{TT}\big)=\frac{\det(M_{II})}{\det M}.
\end{equation*}
\end{prop}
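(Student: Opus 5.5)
This is Jacobi's complementary minor identity. I will prove it by a Schur-complement computation after permuting indices so that $I$ comes first. Since $M$ is symmetric, I conjugate by a permutation matrix $P$ that puts the indices of $I$ first and those of $T$ last. This changes neither $\det M$ nor the relevant principal submatrices of $M$ and $M^{-1}$, since $(PMP^{\mathsf T})^{-1}=PM^{-1}P^{\mathsf T}$ and principal minors are permuted accordingly. I may therefore write
\[
M=\begin{pmatrix} A & B\\ B^{\mathsf T} & C\end{pmatrix},\qquad A=M_{II},\quad C=M_{TT}.
\]
Symmetry is not essential here; it only makes the notation cleaner.

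First I treat the generic case $\det A\neq 0$. The block factorisation
\[
M=\begin{pmatrix} 1 & 0\\ B^{\mathsf T}A^{-1} & 1\end{pmatrix}\begin{pmatrix} A & 0\\ 0 & S\end{pmatrix}\begin{pmatrix} 1 & A^{-1}B\\ 0 & 1\end{pmatrix},\qquad S:=C-B^{\mathsf T}A^{-1}B,
\]
gives $\det M=\det A\,\det S$, so $S$ is invertible because $M$ is. Inverting the three factors shows that the $TT$ block of $M^{-1}$ is exactly $S^{-1}$. Hence $\det\big((M^{-1})_{TT}\big)=\det S^{-1}=\det A/\det M=\det(M_{II})/\det M$.

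It remains to remove the assumption $\det A\neq0$. I would do this by a density argument. Both sides, multiplied by $\det M$, are polynomial identities in the entries of $M$: the left side becomes $\det M\cdot\det\big(\operatorname{adj}(M)_{TT}\big)/(\det M)^{|T|}$. A cleaner route is to apply the generic case to $M+\varepsilon\,\mathrm{diag}(1_I,0_T)$ and let $\varepsilon\to0$. For small $\varepsilon\neq0$ the block $A+\varepsilon$ is invertible, and $M+\varepsilon\,\mathrm{diag}(1_I,0_T)$ stays invertible near $\varepsilon=0$. Both sides are continuous in $\varepsilon$ there, so the identity passes to the limit.

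The degenerate case $I=\emptyset$ (or $T=\emptyset$) holds with the convention that an empty determinant equals $1$. The only step that needs care is this perturbation limit, and it is routine.
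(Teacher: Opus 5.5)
Your proof is correct and follows the same Schur-complement route as the paper, which simply quotes $\det M=\det(M_{II})\det S$ and $(M^{-1})_{TT}=S^{-1}$ with $S=M_{TT}-M_{TI}M_{II}^{-1}M_{IT}$. Your perturbation $M+\varepsilon\,\mathrm{diag}(1_I,0_T)$ also covers the case $\det(M_{II})=0$, which the paper's one-line proof tacitly excludes; your aside about a polynomial identity via the adjugate is garbled as written, but the argument does not rely on it.
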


\begin{proof}
The result follows from the Schur complement factorisation $\det M=\det(M_{II})\det\big(M_{TT}-M_{TI}M_{II}^{-1}M_{IT}\big)$, and the block-inverse formula $(M^{-1})_{TT}=\big(M_{TT}-M_{TI}M_{II}^{-1}M_{IT}\big)^{-1}$.
\end{proof}

\begin{thm}
%[Determinant formula]
\label{thm:determinant-formula}
Let $I=\{i_1,\dots,i_\kappa\}\subset\{1,\dots,l\}$ and $D=\bigcap_{q\in I}\Pi_{\alpha_q}$. Set
\begin{equation}\label{eq:P-definition}
P \;=\; J^{2}\,\det\big(\eta^{i_ai_b}\big)_{1\le a,b\le\kappa},
\end{equation}
with $\eta^{ij}$ as in \eqref{eq:contravariant-metric}. Then $P$ has a well-defined restriction $P_D=P|_D$, and the determinant of $\eta_D$ in the coordinates of Lemma~\ref{lem:stratum-coords} satisfies
\begin{equation}\label{eq:detD-formula}
\det\eta_D=\varepsilon\, P_D,
\end{equation}
where $\varepsilon$ is a fixed unit depending only on the chosen normalisations.
\end{thm}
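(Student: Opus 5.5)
The plan is to compute $\det\eta_D$ in the chart of Lemma~\ref{lem:stratum-coords} from the ambient covariant Gram matrix, and then use Proposition~\ref{prop:jacobi-minor-identity} to trade it for a minor of the contravariant metric.

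\textbf{Step 1.} The embedding $D\times\bbC\hookrightarrow\mathfrak{h}\oplus\bbC$ is linear. In the ambient chart $(\tilde x_1,\dots,\tilde x_l;x_{l+1})$ the stratum is cut out by $x_q=0$ for $q\in I$. These conditions are not coordinate hyperplanes in the $\tilde x$-chart. I will therefore pass to the mixed chart $(x_q)_{q\in I}\cup(\tilde x_i)_{i\notin I}\cup\{x_{l+1}\}$.

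This mixed chart is a valid linear chart. Its Jacobian relative to the $\tilde x$-chart is constant, with determinant equal to $\det\big((\alpha_q,\alpha_{q'})\big)_{q,q'\in I}\neq0$. In this chart $D$ is a coordinate subspace, so $\eta_D$ is the $TT$ block $M_{TT}$ of the covariant matrix $M=\eta$. Here $T=\{i\notin I\}\cup\{l+1\}$.

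\textbf{Step 2.} Apply Proposition~\ref{prop:jacobi-minor-identity} to $M^{-1}=\eta^{-1}$ with index set $I$. This gives
\[
\det(M_{TT})=\det\big((M^{-1})_{II}\big)\,\det M .
\]
By \eqref{eq:detbig}, $\det M$ equals $J^2$ up to a constant; the constant comes from the anti-diagonal $\eta$ in the $t$-chart and the constant change of frame.

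It remains to identify $(M^{-1})_{II}$, the contravariant components in the mixed chart. Since $dx_q=\sum_j(\alpha_q,\alpha_j)\,d\tilde x_j$, these are $\eta(dx_q,dx_{q'})$. I must check how this relates to $(\eta^{i_ai_b})$ in \eqref{eq:contravariant-metric}.

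My expectation is that the normalisation there is already adapted to this. The derivation in Proposition~\ref{prop:contravariant-metric} uses $\gamma^{aj}\partial_{\tilde x_a}=\partial_{\omega^\vee_j}$, and the indices $i,j$ effectively label the dual basis to $\partial_{\omega^\vee}$, that is $dx_j$. So $\eta^{ij}$ as written is $\eta(dx_i,dx_j)$. I will verify this carefully. If it fails, the discrepancy is a constant matrix, and it changes the determinant only by a nonzero constant, which is absorbed into $\varepsilon$.

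\textbf{Step 3.} Combining the two steps,
\[
\det\eta_D=\varepsilon\,\big(J^2\det(\eta^{i_ai_b})\big)\big|_D .
\]
Here $\varepsilon$ collects the constants from the chart changes and any exponential prefactor in $J$ (a unit).

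\textbf{Main obstacle: restriction to $D$.} Individually, $J^2$ vanishes on $D$ and $\eta^{ij}$ has poles along the mirrors, since $e^i=\pm J_i/J$. So $P$ must be shown to be a regular Fourier polynomial, so that $P|_D$ is meaningful.

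I would argue as follows. $M$ is polynomial in $e^{\pm x}$, being built from $\partial t$'s. By Proposition~\ref{prop:jacobi-minor-identity} applied with the roles of $I$ and $T$ swapped, $P$ equals, up to a constant, the principal $TT$-minor of $M$ in the mixed chart. That minor is manifestly regular, so its restriction is well-defined and equals $\det\eta_D$ directly. Equivalently, $J^2\eta^{ij}$ is a cofactor of $M$ and hence regular; a $\kappa\times\kappa$ minor of $\operatorname{adj}M$ is $\det M^{\kappa-1}$ times a complementary minor, which again gives regularity.

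Writing this identity as an equality of rational functions on the complement of the discriminant and then extending by continuity completes the proof.
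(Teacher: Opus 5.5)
The gap is in Step~2. The matrix $(\eta^{ij})$ of Proposition~\ref{prop:contravariant-metric} consists of the contravariant components in the $\tilde x$-chart, $\eta^{ij}=\eta(d\tilde x_i,d\tilde x_j)$. There, $e^i=d\tilde x_i(e)$ is the component along $\partial_{\alpha_i}=\partial_{\tilde x_i}$. Also, $\partial_{\omega^\vee_j}=\sum_a\gamma^{aj}\partial_{\tilde x_a}$ is the $\gamma$-gradient of $\tilde x_j$: it is a direction of differentiation, not a label for the covector $dx_j$. The paper's proof says explicitly that $Q=(\eta^{rs})$ is the contravariant matrix in the $\tilde x$-coordinates.

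In your chart $(x_I,\tilde x_T,x_{l+1})$, the block entering the Jacobi identity is instead $\eta(dx_q,dx_{q'})=\sum_{j,k\le l}(\alpha_q,\alpha_j)(\alpha_{q'},\alpha_k)\,\eta^{jk}$. That is $(GQG)_{II}$ with $G=\big((\alpha_i,\alpha_j)\big)$, and it involves every finite component $\eta^{jk}$, not only those with $j,k\in I$. So your fallback is false: the discrepancy is not a constant matrix that merely rescales the determinant, and $\det\big((GQG)_{II}\big)$ is not a constant multiple of $\det(Q_{II})$. As written, your argument proves $\det\eta_D\propto J^2\det\big(\eta(dx_q,dx_{q'})\big)\big|_D$. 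That is a different formula from the stated one, and not the one used afterwards, e.g.\ $\det\eta_D=\varepsilon J^2\eta^{mm}|_D$ with $\eta^{mm}=2(-1)^m\partial_{\omega^\vee_m}(J_m/J)$ in codimension one.

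The missing ingredient is that $\partial_{\tilde x_q}t_k=\partial_{\alpha_q}t_k$ vanishes on $\Pi_{\alpha_q}\supseteq D$ for $q\in I$, by $s_{\alpha_q}$-invariance of $t_k$ (Proposition~\ref{prop:divisibility}). The paper works directly in the $\tilde x$-chart. This vanishing kills the $d\tilde x_q$-terms in the pull-back of every $dt_k$. Hence the tangential block $M_{TT}|_D$ of the $\tilde x$-chart Gram matrix $M$ is the Gram matrix of $\eta_D$, even though $D$ is not a coordinate subspace there. Jacobi's identity then produces exactly $Q_{II}$.

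Your mixed chart makes the restriction trivial but moves the difficulty to the contravariant side. Repairing it needs the same fact, in the form ``the rows and columns of $M$ indexed by $I$ vanish on $D$''. By Cauchy--Binet, $\det\big((GQG)_{II}\big)=\sum_{R,R'}\det G_{IR}\,\det Q_{RR'}\,\det G_{IR'}$, summed over $\kappa$-element subsets $R,R'\subseteq\{1,\dots,l\}$. By Jacobi's complementary-minor formula, $J^2\det Q_{RR'}=\pm\det M_{\bar R'\bar R}$, with complements taken in $\{1,\dots,l+1\}$. Each such term is regular and vanishes on $D$ unless $R=R'=I$, leaving $\det(G_{II})^2\,J^2\det(Q_{II})|_D$. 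Your regularity argument is fine once it is run in the $\tilde x$-chart, where $P=\pm\det M_{TT}$.
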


\begin{proof}
The Gram matrix of the Saito metric \eqref{eq:killing-normalization} in the $\tilde x$-coordinates is
%$\eta=\sum_{i}dt^i\,dt^{\,l+2-i}$ gives 
\[\eta_{rs}=\sum_i \partial_{\tilde x_r}t^i\,\partial_{\tilde x_s}t^{\,l+2-i}\,,\]
where the coefficients are Fourier polynomials. For $r=i_a\in I$ and any invariant $t^i$, Proposition~\ref{prop:divisibility} below gives $\partial_{\tilde x_r}t^i=\partial_{\alpha_{i_a}}t^i$ divisible by $\mathsf{e}_{\alpha_{i_a}}$, hence vanishing on $D$. Consequently, the pull-back of each differential $dt^i$ to $D$ involves only the tangential differentials $d\tilde x_r$ ($r\notin I$) and $dx_{l+1}$, and the matrix of $\eta_D$ in the coordinates of Lemma~\ref{lem:stratum-coords} is the tangential submatrix $C=(\eta_{rs})_{r,s\in T}\big|_D$, $T=(\{1,\dots,l\}\setminus I)\cup\{l+1\}$.

Let $Q=(\eta^{rs})_{r,s\le l+1}$ be the full contravariant matrix in the $\tilde x$-coordinates, so that $(\eta_{rs})=Q^{-1}$. By Proposition~\ref{prop:jacobi-minor-identity},
\begin{equation*}
\det C=\frac{\det\big((\eta^{i_ai_b})_{a,b}\big)}{\det Q}.
\end{equation*}
Now, \[\det Q^{-1}=\det(\eta_{rs})=\big(\det(\partial t/\partial\tilde x)\big)^{2}\det(\eta_{\alpha\beta})=\pm J^2\,,\] using \eqref{eq:killing-normalization} and $\det T=\pm J$. Hence \[\det C=\pm J^{2}\det\big((\eta^{i_ai_b})\big)=\pm P\,;\] the left-hand side is the determinant of a matrix of polynomials restricted to $D$, so the restriction $P_D$ exists, and \eqref{eq:detD-formula} holds with $\varepsilon=\pm1$. The entries $\eta^{i_ai_b}$ appearing in $P$ are the finite ones of \eqref{eq:contravariant-metric}: indices in $I$ are finite indices, and the extended row and column of $Q$ do not enter the minor.
\end{proof}

The individual entries of the minor in \eqref{eq:P-definition} may be singular on $D$. The restriction of $P$ is computed below by restricting one transverse direction at a time in a fixed order, with the existence of the total limit being guaranteed by Theorem~\ref{thm:determinant-formula}.

%\section{Factorisation, divisibility, and combinatorial preliminaries}

%\subsection{Factorisation of the Jacobian}
Recall that there
%\begin{prop}\label{prop:jacobian-factorization}
%There 
is a proportionality of Fourier polynomials \cite{B02}
\begin{equation}\label{eq:jacobian-factorization}
J \propto \re^{(d_1+\dots +d_l)x_{l+1}}\prod_{\beta\in\mathcal{R}_+}\mathsf{e}_\beta = \re^{(l+1) d_{\bar k} x_{l+1}/2} \prod_{\beta\in\mathcal{R}_+}\mathsf{e}_\beta.
\end{equation}
%\end{prop}

%\begin{proof}
%By \cite{DZ1} the invariants $t_1,\dots,t_l$ are obtained from the fundamental characters $\chi_1,\dots,\chi_l$ of $\mathcal{R}$ by an invertible transformation which is triangular over the invariant ring with unit coefficients (powers of $w_0$); such a transformation multiplies the Jacobian by a unit. For the fundamental characters, the classical computation of Steinberg identifies $\det(\partial_{\alpha_j}\chi_i)$ with the Weyl denominator: each $\chi_i=\sum_{w\in\mathcal{W}}\re^{w\omega_i(x)}+\dots$ is $\mathcal{W}$-invariant, so the determinant changes by $\det(w)=\pm1$ under the Weyl action on $x$ and is therefore skew-invariant; every skew-invariant Fourier polynomial vanishes on each mirror $\{\re^{\beta(x)}=1\}$ and, by primitivity of $\beta$ in the root lattice, is divisible by $\mathsf{e}_\beta$; distinct mirrors give coprime factors, so the Weyl denominator $\prod_{\beta\in\mathcal{R}_+}\mathsf{e}_\beta$ (itself skew) divides the determinant; finally, the extreme exponents of both sides equal $\rho=\frac12\sum_{\beta>0}\beta$ after symmetrisation, so the quotient is a unit.
%\end{proof}

%\subsection{Divisibility and vanishing of the minors}

We will repeatedly need to identify the ideal cut out by a trigonometric form. We record the relevant statement once, in the generality in which it will be used: the exponent is allowed to be non-primitive, which is what occurs upon restriction to a stratum.

\begin{lem}\label{lem:cyclotomic-ideal}
Let $\Lambda$ be a lattice of characters of a complex torus, and let $0\neq\delta\in\Lambda$. Then the ideal of the reduced hypersurface $\{\re^{\delta}=1\}$ in the ring of Fourier polynomials with exponents in $\Lambda$ is generated by $\re^{\delta}-1$, equivalently by $\mathsf{e}_\delta$, whether or not $\delta$ is primitive in $\Lambda$.
\end{lem}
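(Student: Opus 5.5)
Write $\delta=k\delta_0$ with $\delta_0$ primitive in $\Lambda$ and $k\ge1$. Since $\delta_0$ is primitive, it extends to a basis $\delta_0=\lambda_1,\lambda_2,\dots,\lambda_n$ of $\Lambda$. The ring of Fourier polynomials is then the Laurent polynomial ring $\bbC[z_1^{\pm1},\dots,z_n^{\pm1}]$ with $z_i=\re^{\lambda_i}$, and the torus is $(\bbC^\times)^n$. In these coordinates $\re^{\delta}-1=z_1^k-1$.

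First, factor $z_1^k-1=\prod_{\zeta^k=1}(z_1-\zeta)$. The roots $\zeta$ are distinct, so this polynomial is squarefree. Each factor $z_1-\zeta$ generates a prime ideal, since the quotient is $\bbC[z_2^{\pm1},\dots,z_n^{\pm1}]$, a domain. Distinct factors are coprime, because their difference is a non-zero constant. Hence the ideal $(z_1^k-1)=\bigcap_\zeta (z_1-\zeta)$ is an intersection of primes and therefore radical. By the Nullstellensatz, which applies because the Laurent ring is a finitely generated $\bbC$-algebra, the ideal of functions vanishing on $\{\re^\delta=1\}$ is the radical of $(\re^\delta-1)$. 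That radical is $(\re^\delta-1)$ itself. This holds for every $k$, primitive or not.

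Second, $\mathsf{e}_\delta=\re^{-\delta/2}(\re^\delta-1)$. When $\delta\in2\Lambda$, the factor $\re^{-\delta/2}$ is a unit of the ring, so the two elements generate the same ideal. Otherwise $\re^{-\delta/2}$ lies in the ring with exponents in $\tfrac12\Lambda$, where it is again a unit; this is the convention for units in \cref{sec:killing-normalization}, which allows exponents divided by $m$. Equivalently, $\mathsf{e}_\delta\equiv \re^\delta-1$, so "generated by $\mathsf{e}_\delta$" is meant up to units.

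The only step needing care is the radicality for non-primitive $\delta$. It reduces to separability of $z^k-1$ in characteristic zero, so I do not expect a real obstacle. The remaining bookkeeping is the basis extension for the primitive part, which follows from the elementary divisor theorem.
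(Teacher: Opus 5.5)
Your proof is correct and follows essentially the same route as the paper: write $\delta$ as a positive multiple of a primitive vector, complete that vector to a basis so that $\re^{\delta_0}$ becomes a coordinate, and use $\re^{\delta}-1=\prod_{\zeta^k=1}(\re^{\delta_0}-\zeta)$, a product of pairwise comaximal prime factors, to get the (radical) ideal of the union of the components. Your explicit appeal to the Nullstellensatz, and your remark that $\re^{-\delta/2}$ is a unit only in the ring with exponents in $\tfrac12\Lambda$ (consistent with the paper's convention for units), spell out steps the paper leaves implicit.
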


\begin{proof}
Write $\delta=r\gamma$ with $\gamma\in\Lambda$ primitive and $r\in\bbZ_{>0}$. Being primitive, $\gamma$ can be completed to a $\bbZ$-basis of $\Lambda$, so that $\re^{\gamma}$ is one of a system of coordinates on the torus; the loci $\{\re^{\gamma}=\zeta\}$, $\zeta^r=1$, are therefore smooth irreducible hypersurfaces with principal, pairwise coprime ideals $(\re^{\gamma}-\zeta)$. Since $\{\re^{\delta}=1\}$ is their disjoint union, its ideal is generated by
\[
\prod_{\zeta^r=1}\big(\re^{\gamma}-\zeta\big)=\re^{r\gamma}-1=\re^{\delta}-1\,,
\]
which differs from $\mathsf{e}_\delta$ by a unit.
\end{proof}

\begin{prop}\label{prop:divisibility}
Let $p$ be a $\widetilde{\mathcal{W}}^{(\bar k)}$-invariant Fourier polynomial and $\alpha\in\mathcal{R}$. Then $\mathsf{e}_\alpha$ divides $\partial_\alpha p$.
\end{prop}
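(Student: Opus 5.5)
The plan is to use the reflection $s_\alpha\in\mathcal{W}\subset\widetilde{\mathcal{W}}^{(\bar k)}$. The idea is to show that $\partial_\alpha p$ is anti-invariant under $s_\alpha$ and then conclude with \cref{lem:cyclotomic-ideal}.

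First, since $p$ is invariant under $s_\alpha$ (acting on $\mathfrak{h}$ and trivially on the extended factor), we have $p(s_\alpha x)=p(x)$. Differentiate this identity along $\alpha$ (more precisely along the vector dual to $\alpha$ under $(\,,)$, which is what $\partial_\alpha$ denotes). Since $s_\alpha$ is linear and sends this direction to its negative, the chain rule gives
\[
(\partial_\alpha p)(s_\alpha x)=-(\partial_\alpha p)(x),
\]
so $\partial_\alpha p$ is $s_\alpha$-anti-invariant.

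Second, on the mirror $\Pi_\alpha$ the map $s_\alpha$ acts as the identity. Anti-invariance then forces $\partial_\alpha p=-\partial_\alpha p$ there, so $\partial_\alpha p$ vanishes on $\{\alpha(x)=0\}$. We want vanishing on the whole reduced hypersurface $\{\re^{\alpha}=1\}$. Its components are the affine mirrors $\{\alpha(x)=2\pi\mathrm{i}\,n\}$, and each of these is the fixed locus of an affine reflection $t\circ s_\alpha$ in $\widetilde{\mathcal{W}}^{(\bar k)}$, where $t$ is a translation by $2\pi\mathrm{i}$ times a suitable multiple of the coroot $\alpha^\vee$. Translations leave $\partial_\alpha$ unchanged, so the same argument applies: $\partial_\alpha p$ is anti-invariant under $t\circ s_\alpha$ and hence vanishes on that affine mirror.

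The step I expect to be the main obstacle is checking that every component $\{\alpha(x)=2\pi\mathrm{i}\,n\}$ is actually reached. This needs the translation lattice to contain $\bbZ\alpha^\vee$ and $\alpha(\alpha^\vee)=2$, which yields only the even $n$. Two routes cover the odd $n$:
\begin{itemize}
\item If $\alpha$ is primitive in the weight lattice, a weight-lattice element $\mu$ with $(\alpha,\mu)=1$ exists, and the extended lattice $Q^\vee+\bbZ\omega^\vee_{\bar k}$ described in the paper supplies the required translations.
\item Alternatively, factor $\re^{\alpha}-1=\prod_{\zeta}(\re^{\gamma}-\zeta)$ as in the proof of \cref{lem:cyclotomic-ideal}, and show vanishing separately on each factor's locus; these loci are unions of affine mirrors of $\widetilde{\mathcal{W}}^{(\bar k)}$.
\end{itemize}
The vanishing on $\{\re^{\alpha}=1\}$ is used together with the character-lattice structure of Fourier polynomials in $(x;x_{l+1})/m$.

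Finally, $\partial_\alpha p$ is a Fourier polynomial that vanishes on the reduced hypersurface $\{\re^{\alpha}=1\}$. By \cref{lem:cyclotomic-ideal}, the ideal of that hypersurface is generated by $\mathsf{e}_\alpha$, so $\mathsf{e}_\alpha$ divides $\partial_\alpha p$.
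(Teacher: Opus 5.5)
Your argument is the paper's own: anti-invariance of $\partial_\alpha p$ under each affine reflection of $\widetilde{\mathcal{W}}^{(\bar k)}$ with linear part $s_\alpha$ gives vanishing on the union $\{\re^{\alpha}=1\}$ of their mirrors. Then \cref{lem:cyclotomic-ideal} is applied to the weight lattice with $\delta=\alpha$, which, as that lemma allows, need not be primitive.

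The real problem is the step you single out as the main obstacle, which rests on a miscalculation. Let $t$ be the translation by $c\,\alpha^\vee$. The fixed locus of $t\circ s_\alpha$ is $\{\alpha(x)=c\}$, since $x-\alpha(x)\alpha^\vee+c\,\alpha^\vee=x$ if and only if $\alpha(x)=c$. What equals $\{\alpha(x)=\alpha(c\,\alpha^\vee)\}=\{\alpha(x)=2c\}$ is the \emph{translate} of $\Pi_\alpha$ by $t$, and that is what you seem to have computed. Now take $c=2\pi\mathrm{i}\,n$. The element $t_{2\pi\mathrm{i}n\alpha^\vee}\circ s_\alpha$ preserves $p$ for every $n\in\bbZ$: $p$ is $s_\alpha$-invariant, it is $2\pi\mathrm{i}\,Q^\vee$-periodic because it is a Fourier polynomial in weights, and $\alpha^\vee\in Q^\vee$. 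Its mirror is exactly $\{\alpha(x)=2\pi\mathrm{i}\,n\}$. So every component of $\{\re^{\alpha}=1\}$, odd $n$ included, is reached directly, as the paper asserts.

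Hence neither of your patches is needed, and as written neither stands on its own. The first patch has three problems:
\begin{itemize}
\item It is conditional on $\alpha$ being primitive in the weight lattice. This fails for the long roots of $C_l$, where $\alpha(Q^\vee)=2\bbZ$.
\item It misidentifies the relevant object. Primitivity gives some $v\in Q^\vee$ with $\alpha(v)=1$, not a weight $\mu$ with $(\alpha,\mu)=1$.
\item The mechanism it would supply is plain periodicity of $\partial_\alpha p$, not a reflection. Composing $s_\alpha$ with a translation not parallel to $\alpha^\vee$ gives a map with no fixed points.
\end{itemize}
The second patch is circular: asserting that the loci $\{\re^{\gamma}=\zeta\}$ are unions of mirrors of $\widetilde{\mathcal{W}}^{(\bar k)}$ is precisely the claim you had doubted. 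Replace the parity discussion by the computation above and the proof is complete.
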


\begin{proof}
Invariance under the reflection $s_\alpha$ gives $p(x)=p(s_\alpha x)$. Applying $\partial_\alpha$ and using $s_\alpha\alpha=-\alpha$ yields $(\partial_\alpha p)(x)=-(\partial_\alpha p)(s_\alpha x)$, so $\partial_\alpha p$ vanishes on the fixed locus $\{\alpha(x)=0\}$. The same argument applies verbatim to every affine reflection of $\widetilde{\mathcal{W}}^{(\bar k)}$ with linear part $s_\alpha$: these are the reflections in the affine hyperplanes $\{\alpha(x)\in2\pi\mathrm{i}\,\bbZ\}$, and their union is precisely the subvariety $\{\re^{\alpha(x)}=1\}$ of the torus. Hence $\partial_\alpha p$ vanishes on the whole of $\{\re^{\alpha(x)}=1\}$, and \cref{lem:cyclotomic-ideal} applied to the weight lattice and $\delta=\alpha$ gives the claim.
\end{proof}

\begin{prop}\label{prop:minor-divisibility}
Let $U=\langle\alpha_i:i\neq m\rangle$. Then $\mathsf{e}_\alpha$ divides $J_m$ for every $\alpha\in\mathcal{R}\cap U$. Moreover $J_m$ vanishes on every stratum $\Pi_\beta\cap\Pi_\gamma$ with $\beta,\gamma\in\mathcal{R}$, $\beta\neq\pm\gamma$.
\end{prop}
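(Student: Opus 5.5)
The plan is to read $J_m$ as the determinant of a linear map restricted to a hyperplane of directions, and to deduce both claims from Proposition~\ref{prop:divisibility}. For a point $x$ and a vector $v=\sum_j c_j\alpha_j\in\mathfrak{h}^*$ (identified with a direction in $\mathfrak{h}$ via the form), set
\[
L_x(v):=\big(\partial_v t_i(x)\big)_{2\le i\le l}\in\bbC^{l-1},\qquad \partial_v=\sum_j c_j\,\partial_{\alpha_j}.
\]
This is linear in $v$. By definition, $J_m$ is the determinant of $L_x$ restricted to $U=\langle\alpha_j:j\neq m\rangle$, computed in the basis $\{\alpha_j\}_{j\neq m}$: its columns are $L_x(\alpha_j)$ for $j\neq m$, and its rows are those of $t_i$ with $i\neq 1$ (the row of $t_{\bar k}=t_1$ is deleted). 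Both statements become assertions about the columns of this matrix.

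\emph{Divisibility.} Take $\alpha\in\mathcal{R}\cap U$ and write $\alpha=\sum_{j\neq m}c_j\alpha_j$ with integer $c_j$, not all zero. Choose $j_0\neq m$ with $c_{j_0}\neq 0$.
\begin{enumerate}
\item Multiply column $j_0$ by $c_{j_0}$ and add $c_j$ times column $j$ for every $j\neq m,j_0$. The new column $j_0$ is $(\partial_\alpha t_i)_{i\ge2}$, and the determinant becomes $c_{j_0}J_m$.
\item Each $t_i$ is a $\widetilde{\mathcal{W}}^{(\bar k)}$-invariant Fourier polynomial, so by Proposition~\ref{prop:divisibility} every entry $\partial_\alpha t_i$ of the new column is divisible by $\mathsf{e}_\alpha$.
\item Expanding along that column gives $\mathsf{e}_\alpha\mid c_{j_0}J_m$. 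Since $c_{j_0}$ is a non-zero constant, $\mathsf{e}_\alpha\mid J_m$.
\end{enumerate}

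\emph{Vanishing.} Let $\beta,\gamma\in\mathcal{R}$ with $\beta\neq\pm\gamma$, so they are linearly independent, and let $x\in\Pi_\beta\cap\Pi_\gamma$.
\begin{enumerate}
\item By Proposition~\ref{prop:divisibility}, $\partial_\beta t_i$ is divisible by $\mathsf{e}_\beta$, which vanishes on $\Pi_\beta$. Hence $L_x(\beta)=0$, and likewise $L_x(\gamma)=0$, so $\ker L_x\supseteq\langle\beta,\gamma\rangle$.
\item A dimension count gives $\dim\langle\beta,\gamma\rangle+\dim U=2+(l-1)>l$. So there is a non-zero $v\in U\cap\langle\beta,\gamma\rangle$, and it satisfies $L_x(v)=0$.
\item Writing $v$ in the basis $\{\alpha_j\}_{j\neq m}$ of $U$ gives a non-zero vector in the kernel of the matrix defining $J_m$ at $x$. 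Therefore $J_m(x)=0$.
\end{enumerate}
Since $x\in\Pi_\beta\cap\Pi_\gamma$ was arbitrary, $J_m$ vanishes on the stratum.

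There is no serious obstacle. The only point needing care is that $\mathsf{e}_\alpha$ is not made into a zero divisor by clearing the constant $c_{j_0}$. This is harmless because the $c_j$ are integers and we only divide by the non-zero number $c_{j_0}$. For the vanishing claim, only pointwise vanishing on the linear stratum is needed, not divisibility, so the dimension argument suffices as it stands.
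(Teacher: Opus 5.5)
Your proof is correct and follows the paper's argument. For the first claim you realise $\partial_\alpha$ as a column combination of the matrix defining $J_m$ and apply Proposition~\ref{prop:divisibility}. For the second you use a non-zero element of $U\cap\langle\beta,\gamma\rangle$ to obtain a column combination, equivalently a kernel vector, that vanishes on $\Pi_\beta\cap\Pi_\gamma$. Your explicit handling of the scalar $c_{j_0}$ and the dimension count simply spell out steps the paper states in one line each.
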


\begin{proof}
Write $\alpha=\sum_{i\neq m}c_i\alpha_i$. The linear combination of the columns of the matrix defining $J_m$ with coefficients $c_i$ has entries $\partial_\alpha t_j$, all divisible by $\mathsf{e}_\alpha$ by Proposition~\ref{prop:divisibility}; expanding the determinant along this combination gives the first claim. For the second, some nontrivial combination $a_1\beta+a_2\gamma$ lies in $U$; since $\partial_\beta t_j$ and $\partial_\gamma t_j$ both vanish on $\Pi_\beta\cap\Pi_\gamma$, so does the corresponding column combination, hence so does $J_m$.
\end{proof}

%\subsection{Restriction of the minors}

\begin{prop}\label{prop:minor-restriction}
For every $\beta\in\mathcal{R}_+\setminus\{\alpha_m\}$, $\mathsf{e}_{\beta|_{D_m}}$ divides $J_m\big|_{D_m}$.
\end{prop}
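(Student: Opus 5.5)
The plan is to treat $D_m=\Pi_{\alpha_m}$ as a torus in its own right and apply Lemma~\ref{lem:cyclotomic-ideal} there. This reduces the statement to a set-theoretic vanishing: $J_m|_{D_m}$ must vanish on the whole reduced hypersurface $\{\re^{\beta|_{D_m}}=1\}\subset D_m$.

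Two preliminary remarks fix the setting.
\begin{itemize}
\item Restriction to $D_m=\langle\omega^\vee_i:i\neq m\rangle$, times the $x_{l+1}$-line, sends Fourier polynomials with exponents in the weight lattice (and in $x_{l+1}/m$) to Fourier polynomials whose exponents lie in the image lattice $\Lambda_D$ of restricted characters. So $J_m|_{D_m}$ lives in the ring to which Lemma~\ref{lem:cyclotomic-ideal} applies.
\item Since $\beta\in\mathcal{R}_+\setminus\{\alpha_m\}$ and $\mathcal{R}$ is reduced, $\beta$ is not proportional to $\alpha_m$. Hence $\delta:=\beta|_{D_m}$ is a non-zero element of $\Lambda_D$, possibly non-primitive, which is exactly the generality allowed by Lemma~\ref{lem:cyclotomic-ideal}.
\end{itemize}

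Next I would split according to the coefficient $c$ of $\alpha_m$ in $\beta$.

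If $c=0$, then $\beta\in U=\langle\alpha_i:i\neq m\rangle$. The first part of Proposition~\ref{prop:minor-divisibility} gives $J_m=\mathsf{e}_\beta\cdot F$ with $F$ a Fourier polynomial, and restricting yields $\mathsf{e}_{\beta|_{D_m}}\mid J_m|_{D_m}$ directly.

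If $c\neq0$, set $\gamma:=\beta-c\,\alpha_m\in U$. The column combination of the matrix defining $J_m$ with the coefficients of $\gamma$ has entries
\[
\partial_\gamma t_j=\partial_\beta t_j-c\,\partial_{\alpha_m}t_j .
\]
By Proposition~\ref{prop:divisibility}, $\partial_\beta t_j$ vanishes on all of $\{\re^{\beta(x)}=1\}$, and $\partial_{\alpha_m}t_j$ vanishes on all of $\{\re^{\alpha_m(x)}=1\}$; this uses the affine reflections, not only the linear ones. Hence the combined column vanishes on the intersection, and expanding the determinant along it shows that $J_m$ vanishes there. Now take $x\in D_m$ with $\re^{\beta(x)}=1$. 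Then $\alpha_m(x)=0$, so $x$ lies in this intersection, and therefore $J_m|_{D_m}$ vanishes on $\{\re^{\delta}=1\}\subset D_m$. Lemma~\ref{lem:cyclotomic-ideal}, applied to $\Lambda_D$ and $\delta$, then gives $\mathsf{e}_\delta\mid J_m|_{D_m}$.

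The step needing most care is the second case. It is an affine strengthening of the second clause of Proposition~\ref{prop:minor-divisibility}, and it is essential that the vanishing hold on every component $\{\beta(x)\in 2\pi\mathrm{i}k\}$ of the restricted hypersurface, not just the linear one. Otherwise Lemma~\ref{lem:cyclotomic-ideal} would only give divisibility by a cyclotomic factor of $\re^{\delta}-1$. This is why I would invoke Proposition~\ref{prop:divisibility} itself, which already covers all affine mirrors, rather than the linear statement. I would also check that the ideal-theoretic conclusion is compatible with the extra $x_{l+1}$ torus factor. This should be harmless, because $\delta$ involves no $x_{l+1}$ and Lemma~\ref{lem:cyclotomic-ideal} is stated for an arbitrary lattice.
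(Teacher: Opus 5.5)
Your proposal is correct and follows essentially the same route as the paper. Both arguments use the column combination for the pair $(\alpha_m,\beta)$, the vanishing of both columns on the full affine mirror sets via Proposition~\ref{prop:divisibility}, the observation that the intersection meets $D_m$ in the whole reduced hypersurface $\{\re^{\beta|_{D_m}}=1\}$, and Lemma~\ref{lem:cyclotomic-ideal} on the restricted lattice. Your case split on the $\alpha_m$-coefficient of $\beta$ is harmless but unnecessary, since the $c\neq0$ argument with $\gamma=\beta-c\,\alpha_m$ covers $c=0$ verbatim.
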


\begin{proof}
By Proposition~\ref{prop:divisibility}, $\mathsf{e}_{\alpha_m}$ divides $\partial_{\alpha_m}t_i$ and $\mathsf{e}_{\beta}$ divides $\partial_{\beta}t_i$; in particular, both columns vanish on every component of $\{\re^{\alpha_m(x)}=1\}$ and $\{\re^{\beta(x)}=1\}$, respectively. The column-combination argument in the proof of Proposition~\ref{prop:minor-divisibility}, applied to the pair $(\alpha_m,\beta)$, then shows that $J_m$ vanishes at every point of the subvariety
\[
\big\{\re^{\alpha_m(x)}=1\big\}\cap\big\{\re^{\beta(x)}=1\big\}
\]
of the torus, whose shadow on $D_m$ is the full reduced hypersurface $\{\re^{\beta|_{D_m}}=1\}$. By \cref{lem:cyclotomic-ideal}, applied to the restricted weight lattice of $D_m$ and to $\delta=\beta|_{D_m}$, the ideal of that hypersurface is generated by $\mathsf{e}_{\beta|_{D_m}}$.
%; note that this does not require $\beta|_{D_m}$ to be primitive, which in general it is not.
\end{proof}

%\begin{proof}
%By Proposition~\ref{prop:minor-divisibility}, $J_k$ vanishes on $\Pi_{\alpha_k}\cap\Pi_\beta$, which is the zero locus in $D_k$ of the restricted form $\beta|_{D_k}$. Since $\beta|_{D_k}$ is primitive in the restricted weight lattice, $\mathsf{e}_{\beta|_{D_k}}$ generates the corresponding ideal in the Fourier ring of $D_k$, giving the claim.
%\end{proof}

\begin{lem}\label{lem:no-proportional-roots}
Let $\mathcal{R}$ be simply laced, and let $\alpha,\beta,\beta'\in\mathcal{R}$ with $\beta,\beta'\notin\{\pm\alpha\}$. If $\beta-c\,\beta'\in\mathbb{R}\alpha$ for some $c\in\mathbb{R}$, then $c\in\{1,-1\}$.
\end{lem}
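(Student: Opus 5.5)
The plan is to argue by lengths and a rank-two subsystem. Suppose $\beta-c\beta'=t\alpha$ with $c,t\in\mathbb{R}$. If $c=0$, then $\beta\in\mathbb{R}\alpha$. In a reduced root system this forces $\beta=\pm\alpha$, which is excluded. So we may assume $c\neq0$. The relation then says that $\beta,\beta',\alpha$ span a space of dimension at most two. Since $\beta\notin\mathbb{R}\alpha$, the dimension is exactly two, so everything happens inside the plane $V_2=\langle\alpha,\beta\rangle$, with $\beta'$ also lying in $V_2$.

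Next I would use the classification of rank-two root subsystems. The set $\mathcal{R}\cap V_2$ is a rank-two root system, and since $\mathcal{R}$ is simply laced it is of type $A_2$ or $A_1\times A_1$: there is no $B_2$ or $G_2$, because all roots have equal length.
\begin{itemize}
\item In the $A_1\times A_1$ case, the only roots are $\pm\alpha,\pm\gamma$ with $\gamma\perp\alpha$. Then $\beta,\beta'\in\{\pm\gamma\}$, so $\beta=\pm\beta'$. Since $\beta\notin\mathbb{R}\alpha$, we get $c=\pm1$ (and $t=0$).
\item In the $A_2$ case, the roots are $\pm\alpha,\pm\gamma,\pm(\alpha+\gamma)$ for a suitable $\gamma$ with $(\alpha,\gamma)=-1$, normalising all roots to norm $2$.
\end{itemize}

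A cleaner uniform approach is to project orthogonally to $\alpha$. Let $\pi$ denote the orthogonal projection onto $\alpha^\perp$. The hypothesis gives $\pi(\beta)=c\,\pi(\beta')$, with $\pi(\beta)\neq0$. In a simply laced system with all roots of norm $2$, we have $(\beta,\alpha^\vee)=(\beta,\alpha)\in\{-1,0,1\}$ for $\beta\neq\pm\alpha$. Hence
\[
|\pi(\beta)|^2 = 2-\tfrac{(\beta,\alpha)^2}{2}\in\{2,\tfrac32\},
\]
and similarly for $\beta'$. Moreover, $\pi(\beta)$ and $\pi(\beta')$ lie on the same line in the one-dimensional space $V_2\cap\alpha^\perp$.

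The key step, and the only real obstacle, is to rule out $|c|=\sqrt{4/3}$ or $\sqrt{3/4}$, which arise when the two projection lengths differ. For this I would use the $A_2$ description above. The projections of $\pm\gamma$ and $\pm(\alpha+\gamma)$ to $\alpha^\perp$ all have the same length $\sqrt{3/2}$: since $(\gamma,\alpha)=-1$ and $(\alpha+\gamma,\alpha)=1$, all four have $(\cdot,\alpha)^2=1$. In the $A_1\times A_1$ case the projections of $\pm\gamma$ both have length $\sqrt2$. So in either case, all roots in $V_2\setminus\{\pm\alpha\}$ have projections of the same length, and therefore $|c|=1$, i.e. $c\in\{1,-1\}$.

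Thus the whole proof reduces to the observation that the simply laced rank-two root systems are $A_2$ and $A_1\times A_1$, which can be cited from Bourbaki \cite{B02}.
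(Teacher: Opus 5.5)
Your argument is correct and is essentially the paper's. Both reduce to the rank-two subsystem $\mathcal{R}\cap\langle\alpha,\beta\rangle$ (the paper uses $\langle\alpha,\beta'\rangle$, which makes your separate $c=0$ case unnecessary), use that a simply laced rank-two system is of type $A_1\times A_1$ or $A_2$, and conclude from the explicit list of roots. The only difference is cosmetic: you get $|c|=1$ from the fact that all roots other than $\pm\alpha$ have projections of equal length onto $\alpha^\perp$, whereas the paper reads $c=\pm1$ directly off the coefficient of $\beta'$ in each listed root.
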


\begin{proof}
Let $W=\mathrm{span}(\alpha,\beta')$, a two-dimensional subspace, so that the hypothesis says exactly $\beta\in\mathcal{R}\cap W$. Since $\mathcal{R}$ is simply laced, $(\alpha,\beta')\in\{0,\pm1\}$ (as $\beta'\ne\pm\alpha$), and $\mathcal{R}\cap W$ is a rank-two simply laced root system, hence of type $A_1\times A_1$ or $A_2$; in either case it is completely determined by $\alpha,\beta'$. If $(\alpha,\beta')=0$, $\mathcal{R}\cap W=\{\pm\alpha,\pm\beta'\}$ exactly (type $A_1\times A_1$). If $(\alpha,\beta')=\mp1$, set $\gamma=\alpha\pm\beta'$; then $(\gamma,\gamma)=(\alpha,\alpha)\pm2(\alpha,\beta')+(\beta',\beta')=2\mp2+2=2$, so $\gamma$ is itself a root, and $\mathcal{R}\cap W=\{\pm\alpha,\pm\beta',\pm\gamma\}$ exactly (type $A_2$), since a rank-two simply laced root system containing $\alpha,\beta'$ with this inner product has exactly six roots. In both cases, since $\beta\in\mathcal{R}\cap W$ and $\beta\ne\pm\alpha$, either $\beta=\pm\beta'$, giving $\beta-(\pm1)\beta'=0=0\cdot\alpha$ and hence $c=\pm1$; or (in the second case) $\beta=\pm\gamma=\pm(\alpha\pm\beta')$, giving $\beta-(\pm1)\beta'=\pm\alpha$ (same signs throughout) and hence again $c=\pm1$. No other roots lie in $W$, so these are the only possibilities.
\end{proof}

Let $\cA_m=\cA \cap D_m$ be the restriction of the root hyperplane arrangement $\cA$ to $D_m$. Note that non-proportional roots $\beta$, $\beta' \in \RR$ may restrict to proportional roots on $D_m$, and the proportionality factor need not have absolute value 1. For $H \in \cA_m$, we shall write $\beta_H \in D_m^*$ for the {\it longest} restricted positive root $\beta|_{D_m} \in D_m^*$ with kernel $H$.

\begin{prop}
    Let $\RR\in \{E_6,E_7,E_8, F_4\}$. Then, for $m \in \llbracket 1,l\rrbracket$,

    \beq
    J_m\Big|_{D_m} \propto \re^{(l-1) d_{\bar k} x_{l+1}/2} \prod_{H \in \cA_m} \mathsf{e}_{\beta_H}\,.
    \label{eq:Jkequiv}
    \eeq
    \label{prop:Jkequiv}
\end{prop}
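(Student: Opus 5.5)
Here $D_m=\Pi_{\alpha_m}$ is the codimension-one stratum. The plan has two halves. First, Proposition~\ref{prop:minor-restriction} gives that every $\mathsf{e}_{\beta|_{D_m}}$, $\beta\in\RR_+\setminus\{\alpha_m\}$, divides $J_m|_{D_m}$. Second, a degree and weight count shows that the resulting product of distinct hyperplane factors, each taken with its longest restricted root, already exhausts $J_m|_{D_m}$ up to a constant and the exponential prefactor.

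\textbf{Step 1: divisibility by the product.} For each $H\in\cA_m$, Proposition~\ref{prop:minor-restriction} applied to a root $\beta$ with $\beta|_{D_m}=\beta_H$ (the longest one) shows $\mathsf{e}_{\beta_H}\mid J_m|_{D_m}$. Every other $\beta'$ with kernel $H$ restricts to $\beta_H/r$ with $r\in\bbZ_{>0}$. Hence the ideal of $\{\re^{\beta'|_{D_m}}=1\}$ is contained in that of $\{\re^{\beta_H}=1\}$, so $\mathsf{e}_{\beta_H}$ alone captures all these vanishings, by \cref{lem:cyclotomic-ideal}. Distinct $H$ give pairwise coprime factors, since their zero loci are distinct irreducible components up to torsion translates. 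It follows that $\prod_{H\in\cA_m}\mathsf{e}_{\beta_H}$ divides $J_m|_{D_m}$ in the Fourier ring of $D_m\times\bbC$.

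\textbf{Step 2: bounding the quotient.} I would show that the quotient is a unit by comparing Newton polytopes, using the $\cW$-weight structure of the $t_i$. The $x_{l+1}$-dependence is handled directly: each $t_i$ with $i\le l$ is homogeneous of degree $d_i$ in $\re^{x_{l+1}}$, up to the triangular structure of \cite{DZ1}. The minor $J_m$ omits the row of $t_{\bar k}$, so its $x_{l+1}$-weight is $\sum_i d_i-d_{\bar k}$. By \eqref{eq:jacobian-factorization} we have $\sum_i d_i=(l+1)d_{\bar k}/2$, so this weight equals $(l-1)d_{\bar k}/2$, matching the prefactor in \eqref{eq:Jkequiv}.

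For the $x$-dependence, I would bound the extreme exponents of $J_m$ in each restricted direction. The highest monomial of $\partial_{\alpha_j}t_i$ is governed by the dominant weight $\omega_i$, so the Newton polytope of $J_m|_{D_m}$ sits inside the projection of $\sum_{i\neq\bar k}\mathrm{conv}(\cW\omega_i)$. Meanwhile $\prod_H\mathsf{e}_{\beta_H}$ has Newton polytope $\sum_H[-\beta_H/2,\beta_H/2]$. I would then check that their widths agree along every direction of $D_m^*$, for instance by evaluating on the coweight $\omega_j^\vee$, $j\neq m$. This forces the quotient to be a monomial, i.e.\ a unit. Finally, since $J_m$ is anti-invariant under the parabolic Weyl group $\cW_{\{i\neq m\}}$, the restricted expression is anti-invariant under its action on $D_m$, which pins the unit down to the symmetric form shown.

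\textbf{Main obstacle.} The width comparison in Step 2 is the hard part. It is a finite but type-specific combinatorial identity relating $\sum_H\langle\beta_H,\omega_j^\vee\rangle$ to $\langle\sum_{i\neq\bar k}\omega_i,\omega_j^\vee\rangle$-type data after restriction. I do not see a uniform proof of it, and since $\RR$ ranges only over $E_6,E_7,E_8,F_4$ and $m\in\llbracket1,l\rrbracket$, I would verify it case by case. The check amounts to enumerating the restricted arrangements $\cA_m$ with their longest restricted roots and computing the top degree of $J_m|_{D_m}$ along a generic one-parameter direction. If the polytope argument proves too crude, the fallback is to compute the leading coefficient of $J_m|_{D_m}$ along a generic ray and confirm it matches the product.
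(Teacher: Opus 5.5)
Your Step~1 is the paper's argument: Proposition~\ref{prop:minor-restriction} for each $\beta_H$, plus pairwise coprimality of the primitive factors. Your $x_{l+1}$ count $\sum_i d_i-d_{\bar k}=(l-1)d_{\bar k}/2$ is also how the paper fixes the prefactor. Your fallback in Step~2 is exactly the paper's proof, which compares the actual $\re^{\pm x_i}$-degrees ($i\neq m$) of $J_m|_{D_m}$ and of $\prod_H\mathsf{e}_{\beta_H}$ by direct inspection.

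The gap is in the mechanism you put forward first: the bound $\sum_{i\neq\bar k}\mathrm{conv}(\cW\omega_i)$ is not sharp when $m\neq\bar k$. Its vertex $\rho-\omega_{\bar k}$ can only come from the top monomials $\re^{\omega_i}$ of the rows. The resulting coefficient is, up to non-zero constants, the minor of the diagonal matrix $\big((\omega_i,\alpha_j)\big)$ with row $\bar k$ and a different column $m$ deleted, hence zero.

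Concretely, take $E_6$ with $m=1$ in the paper's labelling ($\bar k=3$). Your bound gives top $\re^{x_2}$-degree $\langle\rho-\omega_3,\omega_2^\vee\rangle=15-4=11$. By contrast, $\prod_{H\in\cA_1}\mathsf{e}_{\beta_H}$ has top $\re^{x_2}$-degree $10$. Indeed, the $15$ positive roots orthogonal to $\alpha_1$ contribute $5\cdot 2=10$ to $\sum_H\langle\beta_H,\omega_2^\vee\rangle$, and the $10$ pairs $\{\delta,\delta+\alpha_1\}$ contribute $1$ each, for a total of $20$.

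So the width identity you propose to verify case by case is false. No bound assembled row by row can see the cancellations that bring $J_m|_{D_m}$ down to the product. You must compute the degrees of $J_m|_{D_m}$ itself, which is your fallback and the paper's route. Once both the $\re^{+x_i}$- and $\re^{-x_i}$-degrees agree for all $i\neq m$, the quotient is already a constant up to the $x_{l+1}$-monomial, so no symmetry argument is needed at the end.

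Three smaller inaccuracies:

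\begin{enumerate}
\item The claim that the top monomial of $\partial_{\alpha_j}t_i$ is governed by $\omega_i$ is false. In the paper's $F_4$ formulas, $t_2$ (the flat coordinate pairing with $Y_3$) contains $-Y_4^2$, whose top weight $2\omega_4=\omega_3+\alpha_4$ lies outside $\mathrm{conv}(\cW\omega_3)$. The containment for $J_m$ is nonetheless true, because $e=\partial_{t_{\bar k}}$ only shifts $Y_{\bar k}$. This makes $J_m$ proportional to a minor of $(\partial_{\alpha_j}Y_i)$.
\item The parabolic subgroup $\cW_{\{i\neq m\}}$ does not preserve $D_m$: any $s_i$ with $\alpha_i$ adjacent to $\alpha_m$ moves $\Pi_{\alpha_m}$. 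So there is no induced anti-invariance on $D_m$.
\item In Step~1 the ideal containment is reversed. From $\{\re^{\beta'|_{D_m}}=1\}\subseteq\{\re^{\beta_H}=1\}$ one gets $(\mathsf{e}_{\beta_H})\subseteq(\mathsf{e}_{\beta'|_{D_m}})$. That remark is not needed for the proof anyway.
\end{enumerate}
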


\begin{proof}
By \cref{prop:minor-restriction}, we already know that each individual factor on the right-hand side of \eqref{eq:Jkequiv} divides $J_m$. The hyperplanes in the product are distinct, and we select exactly one restricted root per hyperplane: writing $\beta_H=r_H\gamma_H$ with $\gamma_H$ primitive, the irreducible factors $\re^{\gamma_H}-\zeta$, $\zeta^{r_H}=1$, of distinct $\mathsf{e}_{\beta_H}$ involve non-proportional $\gamma_H$ and are therefore pairwise coprime. Hence $J_m$ is divisible by the entire product. 
At the same time, a direct inspection of each case reveals that 
\[
\deg_{\re^{\pm x_i}} J_m = \deg_{\re^{\pm x_i}} \prod_{H \in \cA_m} \mathsf{e}_{\beta_H}\,, \quad i \in \llbracket 1, l\rrbracket \setminus \{m\}
%\label{eq:degmatch}
\]
as Fourier polynomials in $(x_1, \dots, x_l)$, which in particular have identical Newton polytopes. We conclude that $J_m$ is a non-zero constant multiple of the right-hand side of \eqref{eq:Jkequiv} up to a monomial in $\re^{x_{l+1}}$, which is easily computed as $\re^{(l-1) d_{\bar k}/2 x_{l+1}}$ from the definition of $J_m$.
\end{proof}

The right-hand side of \eqref{eq:Jkequiv} is a product of \emph{non-zero} linear forms on $D_m$, one for each hyperplane of $\cA_m$, and is in particular not identically zero there. We record the consequence separately: it is what guarantees that the component $e^m$ of the identity field is genuinely singular along $\Pi_{\alpha_m}$, and hence that the reduced minors of \cref{defn:reduced-minors} below do not acquire a further factor of $\mathsf{e}_{\alpha_m}$.

\begin{cor}\label{prop:minor-nonvanishing}
Let $\RR$ be as in \cref{prop:Jkequiv}. Then $J_m$ does not vanish identically on $\Pi_{\alpha_m}$. Equivalently, $\mathsf{e}_{\alpha_m}\nmid J_m$ for every $m$.
\end{cor}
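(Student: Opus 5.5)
This follows directly from \cref{prop:Jkequiv}, so the plan is short. By \eqref{eq:Jkequiv}, the restriction $J_m|_{D_m}$, where $D_m=\Pi_{\alpha_m}$, equals a non-zero constant times $\re^{(l-1)d_{\bar k}x_{l+1}/2}$ times the product $\prod_{H\in\cA_m}\mathsf{e}_{\beta_H}$. The factors of this product are trigonometric forms of restricted roots $\beta_H\in D_m^*$.

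I first check that each $\beta_H$ is non-zero on $D_m$. The hyperplanes $H\in\cA_m$ are by definition of the form $\Pi_\beta\cap D_m$ with $\beta\neq\pm\alpha_m$. Such a $\beta$ is not proportional to $\alpha_m$, so $\beta|_{D_m}\neq0$. Hence each $\mathsf{e}_{\beta_H}$ is a non-zero Fourier polynomial on $D_m$: it is a unit multiple of $\re^{\beta_H}-1$ with $\beta_H\neq0$. The exponential prefactor is a unit. The ring of Fourier polynomials on $D_m$ (with exponents in the restricted lattice, extended by $x_{l+1}$) is an integral domain, so the product of non-zero elements is non-zero. Therefore $J_m|_{D_m}\not\equiv0$.

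For the equivalent formulation, I use \cref{lem:cyclotomic-ideal} with $\delta=\alpha_m$. The ideal of $\{\re^{\alpha_m}=1\}$ is generated by $\mathsf{e}_{\alpha_m}$, and this locus contains $\Pi_{\alpha_m}$ as its component through the origin. If $\mathsf{e}_{\alpha_m}$ divided $J_m$, then $J_m$ would vanish on $\Pi_{\alpha_m}$, contradicting the previous step. Conversely, suppose $J_m$ vanishes identically on $\Pi_{\alpha_m}$. By invariance of the setting under the translations in $\widetilde{\mathcal{W}}^{(\bar k)}$, it then vanishes on every component $\{\alpha_m(x)\in2\pi\mathrm{i}\bbZ\}$. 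Alternatively, this direction can be avoided entirely, since only the first implication is needed for the reduced minors. \Cref{lem:cyclotomic-ideal} then gives divisibility.

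The only point needing care is the second step, namely that no $H\in\cA_m$ is degenerate, i.e.\ that $\beta|_{D_m}\neq0$. This is immediate from the definition of $\cA_m$ as the set of restricted hyperplanes. Everything else is contained in \cref{prop:Jkequiv}.
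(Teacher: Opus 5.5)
Your proposal is correct and follows the paper's proof: both read off $J_m|_{D_m}\not\equiv 0$ from \eqref{eq:Jkequiv}, since every $\beta_H$ with $H\in\cA_m$ is a non-zero form on $D_m$, so the product of the non-zero factors $\mathsf{e}_{\beta_H}$ is non-zero. The paper leaves the ``equivalently'' clause implicit, and your extra argument for it is a harmless addition: the needed direction is trivial, and the converse uses translation periodicity (which holds because $\alpha_m(Q^\vee)=\bbZ$ for these types) together with \cref{lem:cyclotomic-ideal}.
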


\begin{proof}
Immediate from \eqref{eq:Jkequiv}: every $\beta_H$, $H\in\cA_m$, restricts to a non-zero linear form on $D_m$, so \[\prod_{H\in\cA_m}\mathsf{e}_{\beta_H}\not\equiv0\] on $D_m$, whence $J_m|_{D_m}\not\equiv0$.
\end{proof}

\begin{defn}\label{defn:reduced-minors}
We define the reduced Jacobian and reduced minors as
\begin{equation*}
I := J\prod_{\alpha\in\Delta}\mathsf{e}_\alpha^{-1},
\qquad
I_m := J_m\prod_{\alpha\in\Delta\setminus\{\alpha_m\}}\mathsf{e}_\alpha^{-1}.
\end{equation*}
\end{defn}
These are Fourier polynomials by Proposition \ref{prop:minor-divisibility}, with $I\equiv\prod_{\beta\in\mathcal{R}_+\setminus\Delta}\mathsf{e}_\beta$. By \eqref{eq:trig-derivative}, $\partial_{\omega^\vee_i}\mathsf{e}_{\alpha_m}=\delta_{im}\,c_{\alpha_m}$, and therefore
\begin{equation}\label{eq:reduced-minor-derivative}
\partial_{\omega^\vee_i}\frac{J_m}{J}
=\partial_{\omega^\vee_i}\frac{I_m}{\mathsf{e}_{\alpha_m}I}
=-\,\delta_{im}\,\frac{c_{\alpha_m}}{\mathsf{e}_{\alpha_m}^{2}}\,\frac{I_m}{I}
+\frac{1}{\mathsf{e}_{\alpha_m}}\,\partial_{\omega^\vee_i}\frac{I_m}{I}.
\end{equation}
The factor $c_{\alpha_m}$, absent in the polynomial theory, is an even function of $\alpha_m(x)$, equal to $1$ on $\{\alpha_m(x)=0\}$.

%\subsection{The adjacent-minor identity}

\begin{prop}\label{prop:adjacent-minor}
Let $\alpha_r,\alpha_s\in\Delta$ with $(\alpha_r,\alpha_s)\neq0$, and $D=D_{r,s}=\Pi_{\alpha_r}\cap\Pi_{\alpha_s}$. Then,
\begin{equation}\label{eq:adjacent-minor-identity}
I_s\big|_{D}=(-1)^{\,r-s-1}\,I_r\big|_{D}.
\end{equation}
\end{prop}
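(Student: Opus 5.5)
We mimic the polynomial adjacent-minor identity of \cite[§6]{AntoniouFS:2020}, using a column operation. Since $(\alpha_r,\alpha_s)\neq0$, the roots $\alpha_r,\alpha_s$ generate a rank-two subsystem in which $\gamma:=\alpha_r+\alpha_s$ is a root. The minors $J_r$ and $J_s$ are obtained from the same $(l-1)\times l$ matrix $N=(\partial_{\alpha_j}t_i)_{i\neq 1}$ by deleting column $r$, respectively column $s$. In $J_s$ we replace the column $\partial_{\alpha_r}t_\bullet$ by $\partial_{\alpha_r}t_\bullet+\partial_{\alpha_s}t_\bullet=\partial_{\gamma}t_\bullet$, which leaves the determinant unchanged, and in $J_r$ we replace $\partial_{\alpha_s}t_\bullet$ by $\partial_\gamma t_\bullet$ in the same way. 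The two resulting matrices then share all columns except that the column $\partial_\gamma t_\bullet$ sits in position $r$ in one and in position $s$ in the other, with the other column differing. This is not yet an equality, so instead I would work on $D$ directly.

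On $D$, both $\partial_{\alpha_r}t_\bullet$ and $\partial_{\alpha_s}t_\bullet$ vanish by Proposition~\ref{prop:divisibility}, so $J_r|_D=J_s|_D=0$ (consistent with Proposition~\ref{prop:minor-divisibility}). The identity must therefore be read after division by $\prod_{\alpha\in\Delta\setminus\{\alpha_m\}}\mathsf{e}_\alpha$. Write $\partial_{\alpha_q}t_i=\mathsf{e}_{\alpha_q}\,f_{q,i}$ for $q\in\{r,s\}$, and $\partial_\gamma t_i=\mathsf{e}_\gamma\, g_i$. Factoring $\mathsf{e}_{\alpha_r}$ out of column $r$ of $J_s$ and $\mathsf{e}_{\alpha_s}$ out of column $s$ of $J_r$, together with the remaining common factors $\mathsf{e}_{\alpha}$, $\alpha\neq\alpha_r,\alpha_s$, we get
\[
I_s=\det\big(\dots,f_{r,\bullet}\ (\text{pos. } r),\dots\big)/(\dots),\qquad I_r=\det\big(\dots,f_{s,\bullet}\ (\text{pos. } s),\dots\big)/(\dots),
\]
with identical remaining columns. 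So it suffices to show that $f_{r,\bullet}|_D=-f_{s,\bullet}|_D$ as column vectors (up to a unit equal to $1$ on $D$), after which moving the column from position $r$ to position $s$ contributes the sign $(-1)^{r-s-1}$ in the conventions of the cofactor ordering.

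For the key step, restrict to the rank-two subspace and use the invariance of $t_i$ under $s_\gamma$. On $D$ the Hessian of $t_i$ in the plane $\langle\alpha_r,\alpha_s\rangle$ is $W(A_2)$-invariant (the parabolic subgroup of type $A_2$ fixes $D$ pointwise, or $B_2$ or $G_2$ in non-simply laced cases), hence a multiple of the Killing form restricted to that plane. Since $f_{q,i}|_D=\lim \partial_{\alpha_q}t_i/\mathsf{e}_{\alpha_q}=\partial_{\omega^\vee_q}$-type second derivatives, precisely $f_{q,i}|_D=\tfrac{1}{(\alpha_q,\alpha_q)}\,\mathrm{Hess}(t_i)(\alpha_q,\alpha_q^\vee)$ computed via \eqref{eq:trig-derivative} with $c_{\alpha_q}|_D=1$, the invariant form gives $f_{r,i}|_D$ and $f_{s,i}|_D$ both proportional to a single function $h_i$, with a constant ratio depending only on root lengths. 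The main obstacle is fixing this ratio and the resulting sign: for the simply laced case it should be $-1$ after accounting for the orientation convention hidden in $(-1)^{r-s-1}$. For a non-simply laced pair, a length ratio would appear, and I would check whether the normalisation in Definition~\ref{defn:reduced-minors} absorbs it or whether the proposition is implicitly used only where $\|\alpha_r\|=\|\alpha_s\|$.
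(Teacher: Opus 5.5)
\textbf{Verdict: there is a genuine gap.} Your bookkeeping of the minors is fine, but the decisive step is left open, and the target you set for it has the wrong sign. Take $r<s$ and remove the common factor $\prod_{\alpha\in\Delta\setminus\{\alpha_r,\alpha_s\}}\mathsf{e}_\alpha$. Then $I_s$ and $I_r$ are determinants of matrices with identical columns, except that $f_{r,\bullet}$ sits at position $r$ in the first and $f_{s,\bullet}$ at position $s-1$ in the second. Moving that column past the $s-r-1$ intermediate columns already produces $(-1)^{s-r-1}=(-1)^{r-s-1}$. So what must be shown is $f_{r,\bullet}|_D=+f_{s,\bullet}|_D$. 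Your proposed $f_{r,\bullet}|_D=-f_{s,\bullet}|_D$ would give $(-1)^{r-s}$, the opposite of the claim. You also expect a ratio depending on root lengths and suggest the statement may hold only for equal lengths. In fact the ratio is exactly $1$ for every pair with $(\alpha_r,\alpha_s)\neq0$, and the proposition is needed for the $B_2$ pair of $F_4$ in \cref{thm:codim2-determinant}.

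\textbf{Where the length factor comes from, and how to fix it.} The spurious length factor comes from your normalisation. Since $\partial_{\alpha_q}t_i$ vanishes identically on $\Pi_{\alpha_q}$, only the component of $\omega^\vee_q$ along $\alpha_q$ contributes. Hence $f_{q,i}|_{\Pi_{\alpha_q}}=\partial_{\omega^\vee_q}\partial_{\alpha_q}t_i=\mathrm{Hess}(t_i)(\alpha_q,\alpha_q)/(\alpha_q,\alpha_q)$. Your $\mathrm{Hess}(t_i)(\alpha_q,\alpha_q^\vee)/(\alpha_q,\alpha_q)$ is off by the factor $2/(\alpha_q,\alpha_q)$. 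With the corrected formula, your invariance idea does close the argument:
\begin{itemize}
\item At a point of $D$, the Hessian restricted to $P=\langle\alpha_r,\alpha_s\rangle$ is invariant under the dihedral group $\langle s_{\alpha_r},s_{\alpha_s}\rangle$, which fixes $D$ pointwise.
\item This group acts absolutely irreducibly on $P$ precisely because $(\alpha_r,\alpha_s)\neq0$.
\item So the restricted Hessian equals $h_i\,(\cdot,\cdot)|_P$, and $f_{r,i}|_D=f_{s,i}|_D=h_i$ exactly, with no length factor and no sign.
\end{itemize}

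\textbf{Comparison with the paper.} The repaired route is a valid alternative, but the paper's argument needs no representation theory. It computes $\partial_{\alpha_s}\partial_{\alpha_r}t_i$ in both orders using $\partial_{\alpha_s}\mathsf{e}_{\alpha_r}=(\alpha_r,\alpha_s)\,c_{\alpha_r}$, which gives $(\alpha_r,\alpha_s)Q_{ri}=(\alpha_r,\alpha_s)Q_{si}$ on $D$. The symmetric pairing appears on both sides, which is why no length ratio or sign can arise, and $(\alpha_r,\alpha_s)\neq0$ is used only to cancel it. As written, however, your proposal does not establish the identity.
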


\begin{proof}
By Proposition~\ref{prop:divisibility}, write \[\partial_{\alpha_m}t_i=\mathsf{e}_{\alpha_m}Q_{mi}\] with $Q_{mi}$ Fourier polynomials, and let \[v_m=(\partial_{\alpha_m}t_2,\dots,\partial_{\alpha_m}t_{l})^{\mathsf T}\,,\quad Q_m=(Q_{m2},\dots,Q_{m,l})^{\mathsf T}\] denote the columns entering the minors, the row of $t_{\bar k}=t_1$ having been deleted. Computing the mixed second derivative in both orders,
\begin{equation*}
\partial_{\alpha_s}\partial_{\alpha_r}t_i
=(\alpha_r,\alpha_s)\,c_{\alpha_r}Q_{ri}+\mathsf{e}_{\alpha_r}\,\partial_{\alpha_s}Q_{ri}
=(\alpha_r,\alpha_s)\,c_{\alpha_s}Q_{si}+\mathsf{e}_{\alpha_s}\,\partial_{\alpha_r}Q_{si},
\end{equation*}
and on $D$ the $\mathsf{e}$-terms vanish and the $c$-factors equal $1$, so $(\alpha_r,\alpha_s)\neq0$ gives $Q_{ri}|_D=Q_{si}|_D$. Assume now $r<s$. Then $\mathsf{e}_{\alpha_r}^{-1}J_s=\det A_{rs}$, where $A_{rs}$ has columns \[v_1,\dots,v_{r-1},Q_r,v_{r+1},\dots,\widehat{v_s},\dots,v_l\] and $\mathsf{e}_{\alpha_s}^{-1}J_r=\det A_{sr}$, where $A_{sr}$ has columns $v_1,\dots,\widehat{v_r},\dots,v_{s-1},Q_s,v_{s+1},\dots,v_l$. On $D$ the two matrices have the same columns up to the cyclic permutation moving position $r$ to position $s$, of sign $(-1)^{s-r-1}$. Multiplying both minors by $\prod_{\alpha\in\Delta\setminus\{\alpha_r,\alpha_s\}}\mathsf{e}_\alpha^{-1}$ yields \eqref{eq:adjacent-minor-identity}.
\end{proof}

\subsection{Codimension one}

Fix $m\in\{1,\dots,l\}$ and let $D=D_m=\Pi_{\alpha_m}$, with coordinates $\tilde x_i$ ($i\neq m$) and $x_{l+1}$.

\begin{thm}\label{thm:codim1-factorization}
The determinant of the restricted Saito metric satisfies
\begin{equation}\label{eq:codim1-factorization}
\det\eta_D \;\equiv\; J_m\big|_D\;\prod_{\beta\in\mathcal{R}_+\setminus\{\alpha_m\}}\mathsf{e}_{\beta|_D}.
\end{equation}
\end{thm}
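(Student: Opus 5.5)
We apply Theorem~\ref{thm:determinant-formula} with $I=\{m\}$, $\kappa=1$, so that $\det\eta_D=\varepsilon\,P_D$ with $P=J^2\eta^{mm}$. By Proposition~\ref{prop:contravariant-metric},
\[
\eta^{mm}=2(-1)^m\,\partial_{\omega^\vee_m}\frac{J_m}{J},
\]
and we expand this with the reduced minors via \eqref{eq:reduced-minor-derivative}, taking $i=m$:
\[
\eta^{mm}=2(-1)^m\Big(-\frac{c_{\alpha_m}}{\mathsf{e}_{\alpha_m}^2}\frac{I_m}{I}+\frac{1}{\mathsf{e}_{\alpha_m}}\partial_{\omega^\vee_m}\frac{I_m}{I}\Big).
\]
Since $J=\mathsf{e}_{\alpha_m}\cdot\big(\prod_{\alpha\in\Delta\setminus\{\alpha_m\}}\mathsf{e}_\alpha\big) I$, multiplying by $J^2$ gives
\[
P=2(-1)^m\Big(\prod_{\alpha\neq\alpha_m}\mathsf{e}_\alpha\Big)^2\Big(-c_{\alpha_m} I_m I+\mathsf{e}_{\alpha_m}\big(I\,\partial_{\omega^\vee_m}I_m-I_m\,\partial_{\omega^\vee_m}I\big)\Big).
\]
Every term is a Fourier polynomial, so $P$ is regular and the restriction to $D=\{\alpha_m(x)=0\}$ can be taken directly.

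On $D$ we have $\mathsf{e}_{\alpha_m}=0$ and $c_{\alpha_m}=1$ by \eqref{eq:trig-derivative}, so the second bracket drops and
\[
P_D=\pm 2\Big(\prod_{\alpha\in\Delta\setminus\{\alpha_m\}}\mathsf{e}_{\alpha|_D}\Big)^2\, I_m|_D\, I|_D .
\]
Now $I_m\prod_{\alpha\neq\alpha_m}\mathsf{e}_\alpha=J_m$, so one factor of the simple-root product recombines with $I_m$ to give $J_m|_D$. The remaining factor combines with $I\equiv\prod_{\beta\in\mathcal{R}_+\setminus\Delta}\mathsf{e}_\beta$ to give $\prod_{\beta\in\mathcal{R}_+\setminus\{\alpha_m\}}\mathsf{e}_{\beta|_D}$, up to units. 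The unit $\varepsilon$ and the constants are absorbed into $\equiv$, which yields \eqref{eq:codim1-factorization}.

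The main point to check is that $I|_D$ equals the product of the restrictions $\mathsf{e}_{\beta|_D}$ up to a unit. Here we only restrict an identity of Fourier polynomials, $I=u\prod\mathsf{e}_\beta$ with $u$ a unit (from \eqref{eq:jacobian-factorization}), and the restriction of a unit is again a unit, so this step is immediate. We also need that no hidden cancellation makes the leading term vanish: the $1/\mathsf{e}_{\alpha_m}^2$ singularity is cleared exactly by $J^2$, with no leftover $\mathsf{e}_{\alpha_m}$ divisor. This is guaranteed formally by the explicit expansion above, and for the exceptional types Corollary~\ref{prop:minor-nonvanishing} additionally confirms that $J_m|_D\not\equiv0$, so the formula is non-degenerate.
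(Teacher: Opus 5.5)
Your proof is correct and is essentially the paper's argument. Both apply Theorem~\ref{thm:determinant-formula} with $I=\{m\}$ to reduce to $J^2\eta^{mm}=2(-1)^m\big(J\,\partial_{\omega^\vee_m}J_m-J_m\,\partial_{\omega^\vee_m}J\big)$, and both observe that on $D$ only the term in which $\partial_{\omega^\vee_m}$ falls on $\mathsf{e}_{\alpha_m}$ survives, with $c_{\alpha_m}\to1$. The only difference is bookkeeping: you factor out $\mathsf{e}_{\alpha_m}$ through the reduced minors and \eqref{eq:reduced-minor-derivative} before restricting, whereas the paper uses $J|_D=0$ and the Leibniz rule on the factorisation of $J$ directly.
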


\begin{proof}
By Theorem~\ref{thm:determinant-formula} with $I=\{m\}$ and Proposition~\ref{prop:contravariant-metric},
\begin{equation*}
\det\eta_D=\varepsilon\,J^{2}\eta^{mm}\big|_D
=2\varepsilon(-1)^{m}\,J^{2}\,\partial_{\omega^\vee_m}\frac{J_m}{J}\Big|_D
=2\varepsilon(-1)^{m}\big(J\,\partial_{\omega^\vee_m}J_m-J_m\,\partial_{\omega^\vee_m}J\big)\Big|_D .
\end{equation*}
As $J=u\,\mathsf{e}_{\alpha_m}\prod_{\beta\in\mathcal{R}_+\setminus\{\alpha_m\}}\mathsf{e}_\beta$ for a unit $u$, we have $J|_D=0$, which removes the first term. In the second term, expanding by the Leibniz rule, every contribution retains a factor $\mathsf{e}_{\alpha_m}$ except the one in which the derivative falls on $\mathsf{e}_{\alpha_m}$, and $\partial_{\omega^\vee_m}\mathsf{e}_{\alpha_m}=c_{\alpha_m}\to1$ on $D$ by \eqref{eq:trig-derivative}. Hence,
%using $\mathsf{e}_\beta|_D=\mathsf{e}_{\beta|_D}$,
\begin{equation*}
\partial_{\omega^\vee_m}J\big|_D= u\prod_{\beta\in\mathcal{R}_+\setminus\{\alpha_m\}}\mathsf{e}_{\beta|_D} .
\end{equation*}
Substituting gives \[\det\eta_D\equiv \,J_m|_D\prod_{\beta\in\mathcal{R}_+\setminus\{\alpha_m\}}\mathsf{e}_{\beta|_D},\] which is \eqref{eq:codim1-factorization}.
\end{proof}

\begin{cor}
    \cref{thm:main} holds for $\RR\in \{E_6,E_7,E_8,F_4\}$ and $\mathrm{codim}(D)=1$.
\end{cor}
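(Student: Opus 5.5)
The plan is to combine \cref{thm:codim1-factorization} with \cref{prop:Jkequiv}. Both are already proved for $\RR\in\{E_6,E_7,E_8,F_4\}$, and together they give the statement directly. Take a codimension-one stratum. By the $\widetilde{\mathcal{W}}^{(\bar k)}$-invariance argument of \cref{sec:intro-strata}, it may be assumed to be $D=D_m=\Pi_{\alpha_m}$ for some simple root $\alpha_m$. In this case $\mathcal{R}_D=\{\pm\alpha_m\}$, so the index set $\mathcal{R}_+\setminus\mathcal{R}_D$ in \eqref{eq:main-factorization} is $\mathcal{R}_+\setminus\{\alpha_m\}$.

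First, \cref{thm:codim1-factorization} gives
\[
\det\eta_D\equiv J_m\big|_D\prod_{\beta\in\mathcal{R}_+\setminus\{\alpha_m\}}\mathsf{e}_{\beta|_D}.
\]
The product already has the required shape, with each exponent contributing $1$. It remains to handle $J_m|_D$.

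Next, apply \cref{prop:Jkequiv}. It writes $J_m|_{D_m}$ as a non-zero constant times the unit $\re^{(l-1)d_{\bar k}x_{l+1}/2}$ times $\prod_{H\in\cA_m}\mathsf{e}_{\beta_H}$. Each $\beta_H$ is by definition the restriction $\beta|_{D_m}$ of some positive root $\beta\in\mathcal{R}_+$ whose restriction is non-zero. So $\beta\neq\alpha_m$, i.e.\ $\beta\in\mathcal{R}_+\setminus\mathcal{R}_D$, and $\mathsf{e}_{\beta_H}=\mathsf{e}_{\beta|_D}$. For each hyperplane $H$, choose one such representative root $\beta$.

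Merging the two products then gives \eqref{eq:main-factorization}. The exponent $n_{\beta,D}$ is $1$, plus $1$ when $\beta$ is the chosen representative of $\beta_H$ for its hyperplane. All exponents are non-negative integers, and the exponential prefactors are units, so they are absorbed by $\equiv$.

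I do not expect any real obstacle here. The only points needing care are these. First, several roots may have proportional restrictions, with the longest one chosen as $\beta_H$; this affects only how the factors are labelled, not their form, since each factor is literally $\mathsf{e}_{\beta|_D}$ for a transverse root $\beta$. Second, the reduction from an arbitrary codimension-one stratum to a simple-root mirror uses that every root is $\cW$-conjugate to a simple root and that units are preserved under this conjugation.
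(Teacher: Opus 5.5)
Your proposal is correct and follows the paper's proof. You substitute \eqref{eq:Jkequiv} into \eqref{eq:codim1-factorization} and note that each $\mathsf{e}_{\beta_H}$ equals $\mathsf{e}_{\beta|_D}$ for some positive root $\beta\neq\alpha_m$, so all exponents are non-negative integers and the exponential prefactors are units; your explicit count of $n_{\beta,D}$ and the reduction to simple-root mirrors (already done in \cref{sec:intro-strata}) are harmless additions.
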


\begin{proof}
    Substituting \eqref{eq:Jkequiv} into \eqref{eq:codim1-factorization} expresses $\det\eta_D$, up to a unit, as a product of trigonometric forms with non-negative integer exponents. Since each $\beta_H$ is itself the restriction to $D$ of a positive root, every factor is of the form $\mathsf{e}_{\beta|_D}$ for some $\beta\in\RR_+\setminus\{\alpha_m\}$, and the product is of the form \eqref{eq:main-factorization}.
\end{proof}
\subsection{Codimension two}
 \label{sec:codim2}
Let $\alpha_r,\alpha_s\in\Delta$ be distinct, $D=D_{r,s}$, $S=\langle\alpha_r,\alpha_s\rangle$, $\mathcal{R}_S=\mathcal{R}\cap S$, and
\begin{equation*}
d_S\coloneqq |\mathcal{R}_+\cap S|-2 .
\end{equation*}
By Theorem~\ref{thm:determinant-formula}, $\det\eta_D=\varepsilon\,J^2\big(\eta^{ss}\eta^{rr}-(\eta^{sr})^2\big)|_D$, the restriction being performed on $\{\alpha_s=0\}$ first and then letting $\alpha_r\to0$.
 
\begin{lem}\label{lem:rank2-positive-roots}
 We have $I\equiv \tilde f\,g$, with
\begin{equation*}
\tilde f=\prod_{\beta\in\mathcal{R}_+\cap S\setminus\{\alpha_r,\alpha_s\}}\mathsf{e}_\beta ,
\end{equation*}
and $g$ not identically zero on $D$.
\end{lem}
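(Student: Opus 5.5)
The plan is to start from the factorisation of the reduced Jacobian recorded after \cref{defn:reduced-minors}, namely $I\equiv\prod_{\beta\in\mathcal{R}_+\setminus\Delta}\mathsf{e}_\beta$, which comes from \eqref{eq:jacobian-factorization}. The positive roots outside $\Delta$ split into two disjoint sets: those lying in $S=\langle\alpha_r,\alpha_s\rangle$, and those not lying in $S$.

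\emph{Step 1.} The non-simple positive roots of $\mathcal{R}_+\cap S$ are exactly $\mathcal{R}_+\cap S\setminus\{\alpha_r,\alpha_s\}$. Their product is $\tilde f$. Defining
\[
g:=\prod_{\beta\in\mathcal{R}_+\setminus(\Delta\cup S)}\mathsf{e}_\beta
\]
then gives $I\equiv\tilde f\,g$ directly.

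\emph{Step 2.} It remains to check that $g\not\equiv0$ on $D=D_{r,s}$. Since $\mathcal{R}_D=\mathcal{R}\cap S$ by \eqref{eq:RD-decomposition}, every root $\beta$ appearing in $g$ lies outside $\langle\alpha_r,\alpha_s\rangle$. Hence $\beta|_D$ is a non-zero linear form on $D$.

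\emph{Step 3.} We have $\mathsf{e}_\beta|_D=\mathsf{e}_{\beta|_D}$, which is $2\sinh$ of half a non-zero linear form. This is not identically zero on $D$. The ring of Fourier polynomials on $D$ is an integral domain (a Laurent polynomial ring after choosing a lattice basis), so a finite product of such non-zero factors is non-zero. The unit ambiguity in $\equiv$ is harmless, since units never vanish.

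\emph{Main obstacle.} The only subtle point is the identification $\mathcal{R}_+\cap S\cap\Delta=\{\alpha_r,\alpha_s\}$. In particular, we must check that no other simple root lies in $S$. This holds by the linear independence of $\Delta$.
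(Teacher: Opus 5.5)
Your proposal is correct and follows essentially the paper's route: split $I\equiv\prod_{\beta\in\mathcal{R}_+\setminus\Delta}\mathsf{e}_\beta$ according to whether $\beta\in S$, and note that the factors with $\beta\notin S$ restrict to non-zero forms on $D$, so their product $g$ does not vanish identically there. The paper additionally records that each $\beta\in\mathcal{R}_+\cap S\setminus\{\alpha_r,\alpha_s\}$ has the form $p_\beta\alpha_r+q_\beta\alpha_s$ with $p_\beta,q_\beta\ge1$; this is not needed for the statement itself but is used in \cref{lem:log-derivative-limit}.
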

 
\begin{proof}
In a reduced rank-two root system with simple generators  $\{\alpha_r,\alpha_s\}$, every positive root other than $\alpha_r,\alpha_s$ has the form $\beta=p_\beta\alpha_r+q_\beta\alpha_s$ with $p_\beta,q_\beta\ge1$. A positive root with $q_\beta=0$ is a positive multiple of $\alpha_r$, hence equals $\alpha_r$ by reducedness; likewise for $p_\beta=0$. The factorisation follows from Definition~\ref{defn:reduced-minors}: the factors of $I$ vanishing identically on $D$ are exactly those with $\beta\in S$.
\end{proof}
 
\begin{lem}\label{lem:log-derivative-limit}
With restrictions in the stated order,
\begin{equation}\label{eq:log-derivative-limit}
\mathsf{e}_{\alpha_r}\, I^{-1}\,\partial_{\omega^\vee_r} I\,\Big|_{\substack{\alpha_s=0\\ \alpha_r=0}} \;=\; d_S .
\end{equation}
\end{lem}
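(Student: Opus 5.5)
Write $I\equiv \tilde f\,g$ as in Lemma~\ref{lem:rank2-positive-roots}. I would make this exact by absorbing the unit into $g$: set $I=u\,\tilde f\,g$ with $u$ a unit. Then
\[
\mathsf{e}_{\alpha_r}\,\frac{\partial_{\omega^\vee_r}I}{I}
=\mathsf{e}_{\alpha_r}\frac{\partial_{\omega^\vee_r}u}{u}
+\sum_{\beta\in\mathcal{R}_+\cap S\setminus\{\alpha_r,\alpha_s\}}\mathsf{e}_{\alpha_r}\frac{\partial_{\omega^\vee_r}\mathsf{e}_\beta}{\mathsf{e}_\beta}
+\mathsf{e}_{\alpha_r}\frac{\partial_{\omega^\vee_r}g}{g}.
\]
Each of the three groups is handled separately under the prescribed order of restriction.

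\emph{The unit and $g$ terms.} The logarithmic derivative of a unit $u\propto\re^{\mu(x)+\nu x_{l+1}}$ is the constant $(\mu,\omega^\vee_r)$. After multiplying by $\mathsf{e}_{\alpha_r}$ it therefore vanishes at $\alpha_r=0$. For $g$, I would first restrict to $\{\alpha_s=0\}$. Since $g$ is not identically zero on $D$, the function $g|_{\alpha_s=0}$ is a Fourier polynomial on $\Pi_{\alpha_s}$ that does not vanish identically on $\{\alpha_r=0\}$ there. The quotient $\partial_{\omega^\vee_r}g/g$ is therefore regular at generic points of $D$, and it is killed by the factor $\mathsf{e}_{\alpha_r}\to 0$. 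Two details need checking here. First, $\partial_{\omega^\vee_r}$ acts on a restriction to $\Pi_{\alpha_s}$ as a derivative tangent to it only up to an $\partial_{\alpha_s}$-component. Second, restricting $\alpha_s$ first and then setting $\alpha_r=0$ leaves the value unaffected at generic points of $D$, because everything involved is regular there.

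\emph{The main terms.} For $\beta=p_\beta\alpha_r+q_\beta\alpha_s$ with $p_\beta,q_\beta\ge1$, equation \eqref{eq:trig-derivative} gives $\partial_{\omega^\vee_r}\mathsf{e}_\beta=p_\beta\,c_\beta$. On $\{\alpha_s=0\}$ we have $\beta(x)=p_\beta\alpha_r(x)$, so each summand becomes
\[
p_\beta\,\frac{c_\beta\,\mathsf{e}_{\alpha_r}}{\mathsf{e}_{p_\beta\alpha_r}}
=p_\beta\,\frac{\cosh(p_\beta t/2)\,2\sinh(t/2)}{2\sinh(p_\beta t/2)},\qquad t=\alpha_r(x).
\]
As $t\to0$ this tends to $p_\beta\cdot\frac{1}{p_\beta}=1$. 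Summing over the $|\mathcal{R}_+\cap S|-2=d_S$ roots gives exactly $d_S$. This is the trigonometric counterpart of the polynomial computation in \cite{AntoniouFS:2020}: the $c_\beta$ factors tend to $1$, and the non-primitivity of $\beta|_{\Pi_{\alpha_s}}$ cancels against the coefficient $p_\beta$.

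\emph{Main obstacle.} The delicate step is justifying that the $g$-term vanishes. Lemma~\ref{lem:rank2-positive-roots} only guarantees $g\not\equiv0$ on $D$. So I must confirm that no factor of $g$ vanishes on $\{\alpha_s=0\}$ to an order that, after differentiation in $\omega^\vee_r$, could produce a $1/\mathsf{e}_{\alpha_r}$ singularity. A factor $\mathsf{e}_\beta$ with $\beta\notin S$ restricts to a non-zero form on $D$, so it is regular and non-vanishing at generic points of $D$. This settles the point, provided the limit is interpreted pointwise at generic points of $D$.
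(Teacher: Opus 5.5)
Your proposal is correct and follows the paper's proof. You log-differentiate the factorisation $I\equiv\tilde f\,g$ of Lemma~\ref{lem:rank2-positive-roots} and kill the $g$-term with the factor $\mathsf{e}_{\alpha_r}$, since $g$ is regular and generically non-zero on $D$; after restricting to $\alpha_s=0$, each of the $d_S$ terms $p_\beta\,\mathsf{e}_{\alpha_r}c_{p_\beta\alpha_r}/\mathsf{e}_{p_\beta\alpha_r}$ tends to $1$. Your explicit treatment of the unit factor and of the order of restriction only spells out steps the paper leaves implicit.
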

 
\begin{proof}
By Lemma~\ref{lem:rank2-positive-roots}, $\mathsf{e}_{\alpha_r}\partial_{\omega^\vee_r}\log I=\mathsf{e}_{\alpha_r}\partial_{\omega^\vee_r}\log g+\sum_\beta p_\beta\,\mathsf{e}_{\alpha_r}\,c_\beta\,\mathsf{e}_\beta^{-1}$, the sum over the $d_S$ roots of $\tilde f$, using \eqref{eq:trig-derivative}. The $g$-term vanishes in the limit since $g$ is regular and generically non-zero on $D$. After the restriction $\alpha_s=0$ each summand becomes $p_\beta\,\mathsf{e}_{\alpha_r}\,c_{p_\beta\alpha_r}/\mathsf{e}_{p_\beta\alpha_r}$, which tends to $p_\beta\cdot\alpha_r/(p_\beta\alpha_r)=1$ as $\alpha_r\to0$.
\end{proof}
 Let \[\widetilde\Delta\coloneqq \Delta\setminus\{\alpha_r,\alpha_s\}\,,\quad A\coloneqq J^2\eta^{ss}\eta^{rr}\,, B\coloneqq J\,\eta^{sr}\,.\]
\begin{lem}\label{lem:codim2-AB-limits}
The following equalities hold on $D$:
\begin{equation*}
A\big|_{D}=4(-1)^{r+s}(d_S+1)\,I_rI_s\prod_{\alpha\in\widetilde\Delta}\mathsf{e}_\alpha^2\Big|_{D},
\qquad
B\big|_{D}=-(-1)^{s}\,d_S\,I_s\prod_{\alpha\in\widetilde\Delta}\mathsf{e}_\alpha\Big|_{D}.
\end{equation*}
\end{lem}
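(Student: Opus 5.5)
The plan is to expand $A$ and $B$ by inserting \eqref{eq:contravariant-metric} and the reduced-minor derivative formula \eqref{eq:reduced-minor-derivative}, factoring out the trigonometric forms of the simple roots. Then I would take the limit in the prescribed order: first $\alpha_s\to0$, then $\alpha_r\to0$. By \cref{defn:reduced-minors},
\[
J=\mathsf{e}_{\alpha_r}\mathsf{e}_{\alpha_s}\,I\prod_{\alpha\in\widetilde\Delta}\mathsf{e}_\alpha .
\]
Write $E:=\prod_{\alpha\in\widetilde\Delta}\mathsf{e}_\alpha$, and $\partial_m:=\partial_{\omega^\vee_m}$ for brevity.

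\emph{Step 1: the diagonal entries.} From \eqref{eq:contravariant-metric}, $\eta^{mm}=2(-1)^m\partial_m(J_m/J)$. Using \eqref{eq:reduced-minor-derivative} with $i=m$ and multiplying by $J$, I obtain
\[
J\eta^{ss}=2(-1)^s\,\mathsf{e}_{\alpha_r}E\,X_s,\qquad
X_s:=-c_{\alpha_s}I_s+\mathsf{e}_{\alpha_s}\partial_sI_s-I_s\,\mathsf{e}_{\alpha_s}\partial_s\log I ,
\]
and symmetrically $J\eta^{rr}=2(-1)^r\mathsf{e}_{\alpha_s}E\,X_r$ with $r\leftrightarrow s$. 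The two factors $\mathsf{e}_{\alpha_r}$ and $\mathsf{e}_{\alpha_s}$ are simply absorbed, so that
\[
A=4(-1)^{r+s}E^2\,X_rX_s .
\]
It therefore suffices to show $X_s|_D=-I_s|_D$ and $X_r|_D=-(d_S+1)I_r|_D$.

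\emph{Step 2: evaluating $X_s$ and $X_r$.} In $X_s$, first set $\alpha_s=0$. The factor $c_{\alpha_s}$ becomes $1$, and $\mathsf{e}_{\alpha_s}\partial_sI_s$ vanishes because $I_s$ is a Fourier polynomial. For the last term I would use \cref{lem:rank2-positive-roots}: $\partial_s\log I$ is a sum of $\partial_s\log g$ and terms $q_\beta c_\beta/\mathsf{e}_\beta$ with $\beta=p_\beta\alpha_r+q_\beta\alpha_s$ and $p_\beta\ge1$. On $\{\alpha_s=0\}$ with $\alpha_r\neq0$, each $\mathsf{e}_\beta=\mathsf{e}_{p_\beta\alpha_r}$ is nonzero, so the extra factor $\mathsf{e}_{\alpha_s}$ kills these terms before $\alpha_r\to0$. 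Hence $X_s|_D=-I_s|_D$. In $X_r$ the same manipulations give $-I_r$ from the first two terms, and the third is exactly $-I_r$ times the quantity computed in \cref{lem:log-derivative-limit}, which equals $d_S$ in the stated order of limits. This yields $X_r|_D=-(d_S+1)I_r|_D$, and the formula for $A|_D$ follows.

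\emph{Step 3: the off-diagonal entry.} From \eqref{eq:contravariant-metric},
\[
\eta^{sr}=(-1)^r\partial_s(J_r/J)+(-1)^s\partial_r(J_s/J).
\]
Now \eqref{eq:reduced-minor-derivative} applies with $i\neq m$, so only the second term there survives. This gives
\[
B=(-1)^r\mathsf{e}_{\alpha_s}E\big(\partial_sI_r-I_r\partial_s\log I\big)+(-1)^s\mathsf{e}_{\alpha_r}E\big(\partial_rI_s-I_s\partial_r\log I\big).
\]
The first summand tends to $0$ when $\alpha_s\to0$ first, by the same argument as for $X_s$. In the second summand, $\mathsf{e}_{\alpha_r}\partial_rI_s\to0$, and $-I_s\,\mathsf{e}_{\alpha_r}\partial_r\log I\to-d_S I_s$ by \cref{lem:log-derivative-limit}. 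This gives the claimed $B|_D$.

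The main obstacle is the careful handling of the order of limits in the $\log I$ terms. I must check that $\mathsf{e}_{\alpha_s}\partial_s\log I$ genuinely vanishes when $\alpha_s\to0$ is taken first, while $\mathsf{e}_{\alpha_r}\partial_r\log I$ produces $d_S$. This requires that $\partial\log g$ stays regular generically on the intermediate locus $\{\alpha_s=0\}$, which holds because $g$ is not identically zero on $D$ (\cref{lem:rank2-positive-roots}) and hence not on $\{\alpha_s=0\}$. It also requires that all other factors are Fourier polynomials, which holds by \cref{prop:minor-divisibility}.
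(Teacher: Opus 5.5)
Your proposal is correct and follows the paper's own argument: you expand $J\eta^{rr}$, $J\eta^{ss}$ and $J\eta^{sr}$ via \eqref{eq:contravariant-metric} and \eqref{eq:reduced-minor-derivative}, restrict to $\alpha_s=0$ first, and then let $\alpha_r\to0$ using \cref{lem:log-derivative-limit}. One slip in your Step~1 display: it should read $J\eta^{ss}=2(-1)^s\,\mathsf{e}_{\alpha_s}^{-1}\mathsf{e}_{\alpha_r}E\,X_s$, which has a simple pole along $\Pi_{\alpha_s}$, and symmetrically for $J\eta^{rr}$; these reciprocal prefactors are what cancel in the product, so your formula $A=4(-1)^{r+s}E^2X_rX_s$ and everything after it still stand.
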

 
\begin{proof}
By Proposition~\ref{prop:contravariant-metric} and \eqref{eq:reduced-minor-derivative}, for $k\in\{r,s\}$,
\begin{equation*}
J\,\eta^{kk}
=\frac{2(-1)^{k}}{\mathsf{e}_{\alpha_k}}\Big(-c_{\alpha_k}\,I_k+\mathsf{e}_{\alpha_k}\big(\partial_{\omega^\vee_k}I_k-I_kI^{-1}\partial_{\omega^\vee_k}I\big)\Big)\prod_{\alpha\in\Delta\setminus\{\alpha_k\}}\mathsf{e}_\alpha ,
\end{equation*}
using $J=I\prod_{\alpha\in\Delta}\mathsf{e}_\alpha$; note that the left-hand side has a simple pole along $\Pi_{\alpha_k}$, the double pole of $\eta^{kk}$ from \eqref{eq:reduced-minor-derivative} being offset by the simple zero of $J$. Multiplying the two factors and using \[\prod_{\alpha\in\Delta\setminus\{\alpha_r\}}\mathsf{e}_\alpha\prod_{\alpha\in\Delta\setminus\{\alpha_s\}}\mathsf{e}_\alpha=\mathsf{e}_{\alpha_r}\mathsf{e}_{\alpha_s}\prod_{\alpha\in\widetilde\Delta}\mathsf{e}_\alpha^2\] the two prefactors $\mathsf{e}_{\alpha_r}^{-1}$, $\mathsf{e}_{\alpha_s}^{-1}$ are absorbed, and
\begin{equation*}
A=4(-1)^{r+s}\Big(-c_{\alpha_s}I_s+\mathsf{e}_{\alpha_s}(\cdots)\Big)\Big(-c_{\alpha_r}I_r+\mathsf{e}_{\alpha_r}(\cdots)\Big)\prod_{\alpha\in\widetilde\Delta}\mathsf{e}_\alpha^2 .
\end{equation*}
Restricting to $\alpha_s=0$, so that $c_{\alpha_s}\to1$ and $\mathsf{e}_{\alpha_s}\to0$, the first bracket becomes $-I_s$. Letting then $\alpha_r\to0$, so that   \[c_{\alpha_r}\to1, \quad \mathsf{e}_{\alpha_r}\partial_{\omega^\vee_r}I_r\to0, \quad \mathsf{e}_{\alpha_r}I_rI^{-1}\partial_{\omega^\vee_r}I\to d_S\,I_r\] by Lemma~\ref{lem:log-derivative-limit}, the second bracket becomes $-(d_S+1)I_r$. This proves the first formula. For $B$, \eqref{eq:reduced-minor-derivative} contributes no double-pole term since $r\neq s$, and
\begin{equation*}
B=\Big((-1)^{r}\mathsf{e}_{\alpha_s}\big(\partial_{\omega^\vee_s}I_r-I_rI^{-1}\partial_{\omega^\vee_s}I\big)
+(-1)^{s}\mathsf{e}_{\alpha_r}\big(\partial_{\omega^\vee_r}I_s-I_sI^{-1}\partial_{\omega^\vee_r}I\big)\Big)\prod_{\alpha\in\widetilde\Delta}\mathsf{e}_\alpha .
\end{equation*}
The first summand vanishes at $\alpha_s=0$; in the second, \[\mathsf{e}_{\alpha_r}\partial_{\omega^\vee_r}I_s\to0 \quad \mathrm{and} \quad \mathsf{e}_{\alpha_r}I_sI^{-1}\partial_{\omega^\vee_r}I\to d_S I_s\] by Lemma~\ref{lem:log-derivative-limit}.
\end{proof}
 
\begin{thm}\label{thm:codim2-determinant}
The determinant of the restricted Saito metric satisfies
\begin{equation*}
\det\eta_D \equiv 
%\;=\;\varepsilon\,(-1)^{r+s+1}\,(d_S+2)^2\, 
I_s\, I_r \prod_{\alpha\in\widetilde\Delta}\mathsf{e}_\alpha^2\,\Big|_{D}\,.
\end{equation*}
%$=h(\mathcal{R}_S)$ when $\mathcal{R}_S$ is irreducible, %(Proposition~\ref{prop:jacobian-factorization} applied to $\mathcal{R}_S$), 
%so the roots of $\mathcal{R}_D=\mathcal{R}_S$ itself contribute to $\det\eta_D$ exactly as in the finite Coxeter-group theory of Section~\ref{sec:afs-background}. The reduced minors $I_r,I_s$ are not further expanded as products over the transverse roots $\beta\in\mathcal{R}_+\setminus\mathcal{R}_S$: when $\mathcal{R}$ is simply laced, Corollary~\ref{cor:minor-full-divisibility} shows $J_r|_{D_r}$ and $J_s|_{D_s}$ are each divisible by the full product of transverse-root factors over their respective sign-classes, but not, so far as we can show, equal to it up to a unit (Remark~\ref{rem:no-product-divisibility}); for non-simply laced $\mathcal{R}$ only the individual divisibility of Proposition~\ref{prop:minor-restriction} is available.
\end{thm}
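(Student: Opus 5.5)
The plan is to start from Theorem~\ref{thm:determinant-formula} with $I=\{r,s\}$. It expresses the determinant as
\[
\det\eta_D=\varepsilon\,J^2\big(\eta^{ss}\eta^{rr}-(\eta^{sr})^2\big)\big|_D=\varepsilon\,\big(A-B^2\big)\big|_D ,
\]
where $A=J^2\eta^{ss}\eta^{rr}$ and $B=J\eta^{sr}$ as defined before Lemma~\ref{lem:codim2-AB-limits}. The restriction is taken by first setting $\alpha_s=0$ and then letting $\alpha_r\to0$. The existence of the total limit is guaranteed by Theorem~\ref{thm:determinant-formula}, so this iterated limit computes $P_D$. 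Lemma~\ref{lem:codim2-AB-limits} then supplies the two limits. Substituting them gives
\[
A-B^2\big|_D=\Big(4(-1)^{r+s}(d_S+1)\,I_rI_s-d_S^2\,I_s^2\Big)\prod_{\alpha\in\widetilde\Delta}\mathsf{e}_\alpha^2\Big|_D .
\]

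The key step is to rewrite $I_s^2$ in terms of $I_rI_s$ on $D$, and I would split into two cases.

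\emph{Case $(\alpha_r,\alpha_s)\neq0$.} Proposition~\ref{prop:adjacent-minor} gives $I_s|_D=(-1)^{r-s-1}I_r|_D$. Hence $I_s^2|_D=(-1)^{r+s+1}I_rI_s|_D$, and the bracket becomes
\[
(-1)^{r+s}\big(4(d_S+1)+d_S^2\big)I_rI_s=(-1)^{r+s}(d_S+2)^2\,I_rI_s .
\]
Since $(d_S+2)^2\neq0$, this is a non-zero constant times $I_rI_s\prod_{\widetilde\Delta}\mathsf{e}_\alpha^2|_D$, which is the claim. Note that $d_S+2=|\mathcal{R}_+\cap S|$, the Coxeter number of the rank-two subsystem, which matches the finite theory.

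\emph{Case $(\alpha_r,\alpha_s)=0$.} Here $\mathcal{R}_S=A_1\times A_1$, so $d_S=0$ and $B|_D=0$. Then $\det\eta_D\propto 4(-1)^{r+s}I_rI_s\prod\mathsf{e}_\alpha^2|_D$ directly.

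Some care is needed in Lemma~\ref{lem:codim2-AB-limits}, where the bracket terms $\mathsf{e}_{\alpha_r}\partial_{\omega^\vee_r}I_r$ and the other regular terms must genuinely vanish in the ordered limit. They do, because $I_r$, $I_s$ and $I$ are Fourier polynomials and $g$ is non-zero on $D$. The constant $\varepsilon$ and the signs are absorbed into $\equiv$.

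The main obstacle I expect is the sign bookkeeping in the adjacent case. I need to check that the sign from Proposition~\ref{prop:adjacent-minor} combines with the $(-1)^{r+s}$ in $A$ to give $4(d_S+1)+d_S^2$ rather than $4(d_S+1)-d_S^2$. The latter could vanish, for instance at $d_S$ with $d_S^2=4d_S+4$, which has no integer solution, so it would not in fact vanish. Either way the result is a non-zero constant multiple of $I_rI_s$, so the statement up to units is robust. I would also confirm that the symmetric role of $r,s$ (assumed $r<s$ in Proposition~\ref{prop:adjacent-minor}) does not affect this.
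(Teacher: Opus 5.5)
Your proposal is correct and matches the paper's proof essentially step for step. You combine Theorem~\ref{thm:determinant-formula} with Lemma~\ref{lem:codim2-AB-limits}, use Proposition~\ref{prop:adjacent-minor} in the joined case to rewrite $-d_S^2I_s^2$ as $(-1)^{r+s}d_S^2I_rI_s$ (so the bracket becomes $(-1)^{r+s}(d_S+2)^2I_rI_s$), and observe that the orthogonal case $d_S=0$ is immediate. Your sign check is right, so the worry about $4(d_S+1)-d_S^2$ is moot.
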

 
\begin{proof}
By Theorem~\ref{thm:determinant-formula} and Lemma~\ref{lem:codim2-AB-limits}, we have \[\det\eta_D=\varepsilon(A-B^2)|_D=\varepsilon\big(4(-1)^{r+s}(d_S+1)I_rI_s-d_S^2I_s^2\big)\prod\mathsf{e}_\alpha^2|_D.\] If $d_S>0$, then the simple roots $\alpha_r,\alpha_s$ are joined in the Dynkin diagram, and Proposition~\ref{prop:adjacent-minor} gives $I_s|_D=(-1)^{r-s-1}I_r|_D$, hence $I_s^2=(-1)^{r+s+1}I_rI_s$ on $D$, and the bracket becomes $(-1)^{r+s}(d_S^2+4d_S+4)I_rI_s$. If $d_S=0$ the same expression holds trivially.
\end{proof}
 
\begin{prop}
\label{prop:codim2-Ir-factorization}
Let $\mathcal{R}\in\{E_6,E_7,E_8,F_4\}$ and $D=D_{r,s}$ a codimension-two stratum. For $i\neq r$ put
\[
q^{(r)}_i:=\max\big\{q\ :\ p\,\alpha_r+q\,\alpha_i\in\mathcal{R}_+\ \text{for some }p\ge0\big\},
\qquad
N_r:=\big\{i\neq r\ :\ q^{(r)}_i=2\big\},
\]
so that $i\in N_r$ if and only if $\alpha_r+2\alpha_i\in\mathcal{R}$. Then
\begin{equation}\label{eq:codim2-Ir}
I_r\big|_D \;\equiv\; \prod_{i\in N_r\setminus\{s\}} c_{\alpha_i|_D}
\prod_{\beta\in\mathcal{R}_+\setminus\mathcal{R}_D}\mathsf{e}_{\beta|_D}^{\,r_{\beta,D}}\,,
\qquad r_{\beta,D}\ge0\,,
\end{equation}
and symmetrically for $I_s|_D$, with $N_s$ in place of $N_r$. 
\end{prop}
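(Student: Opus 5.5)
Since $I_r=J_r\prod_{\alpha\in\Delta\setminus\{\alpha_r\}}\mathsf{e}_\alpha^{-1}$, I will compute $I_r|_D$ by first restricting $J_r$ to $D_r=\Pi_{\alpha_r}$ via \cref{prop:Jkequiv}, then dividing out the simple-root factors, and only then setting $\alpha_s=0$. By \eqref{eq:Jkequiv},
\[
J_r\big|_{D_r}\propto \re^{(l-1)d_{\bar k}x_{l+1}/2}\prod_{H\in\cA_r}\mathsf{e}_{\beta_H},
\]
where $\beta_H$ is the longest positive restricted root with kernel $H$. For each $i\neq r$, the hyperplane $H_i=\{\alpha_i|_{D_r}=0\}$ lies in $\cA_r$. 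Its longest restricted root is $q^{(r)}_i\,\alpha_i|_{D_r}$, because the positive roots restricting to multiples of $\alpha_i|_{D_r}$ are exactly those of the form $p\alpha_r+q\alpha_i$. Consequently the factor of $J_r|_{D_r}$ attached to $H_i$ is $\mathsf{e}_{q^{(r)}_i\alpha_i}|_{D_r}$. If $q^{(r)}_i=1$ this is just $\mathsf{e}_{\alpha_i}$. If $q^{(r)}_i=2$, the double-angle identity $\mathsf{e}_{2\gamma}=2\,\mathsf{e}_\gamma c_\gamma$ gives $\mathsf{e}_{\alpha_i}c_{\alpha_i}$. In the exceptional types listed, $q^{(r)}_i\le 2$ always; for $F_4$ this is a short check, and $q=3$ occurs only in $G_2$.

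Dividing by $\prod_{i\neq r}\mathsf{e}_{\alpha_i}$ then gives
\[
I_r\big|_{D_r}\equiv\prod_{i\in N_r}c_{\alpha_i|_{D_r}}\prod_{H\in\cA_r\setminus\{H_i\}_{i\neq r}}\mathsf{e}_{\beta_H}.
\]
Every remaining factor is $\mathsf{e}_{\beta|_{D_r}}$ for some positive root $\beta\notin\langle\alpha_r,\alpha_i\rangle$, and the exponential prefactor is a unit.

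Next I restrict further to $D=D_r\cap\Pi_{\alpha_s}$, which is legitimate since $I_r$ is a Fourier polynomial. The $c$-factors with $i\neq s$ restrict to $c_{\alpha_i|_D}$. The factor $c_{\alpha_s}$ for $s\in N_r$ becomes $c_0=1$; this is why $s$ is excluded from the product in \eqref{eq:codim2-Ir}. By \eqref{eq:trig-additivity}, each remaining $\mathsf{e}_{\beta_H}$ restricts to $\mathsf{e}_{\beta|_D}$.

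The main obstacle is showing that none of these remaining factors vanishes identically on $D$. Such a vanishing would make the right-hand side of \eqref{eq:codim2-Ir} zero, or would produce a factor not of the stated form. The danger is a root $\beta\in\mathcal{R}_D=\mathcal{R}\cap\langle\alpha_r,\alpha_s\rangle$ whose restriction $\beta|_{D_r}$ is a nonzero multiple of $\alpha_s|_{D_r}$. Any such $\beta$ lies on $H_s$, whose factor has already been divided out. Hence every surviving factor comes from $\beta\notin\langle\alpha_r,\alpha_s\rangle=\langle S\rangle$, so $\beta\in\mathcal{R}_+\setminus\mathcal{R}_D$ and $\beta|_D\neq0$.

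I would check this bookkeeping carefully: the hyperplane $H\subset D_r$ determines the 2-plane $\langle\alpha_r,\beta\rangle$, and $H=H_s$ exactly when that plane equals $\langle\alpha_r,\alpha_s\rangle$. The surviving factors may repeat on $D$, since several $H$ can restrict to the same hyperplane of $D$; they then collect into the exponents $r_{\beta,D}\ge0$. The statement for $I_s$ follows by exchanging the roles of $r$ and $s$, restricting to $D_s$ first; the result is independent of the order of restriction because $I_s$ is polynomial.
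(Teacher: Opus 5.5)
Your proposal is correct and follows the paper's proof essentially step by step. You restrict $J_r$ to $D_r$ via \cref{prop:Jkequiv}, identify $\beta_{H_i}=q^{(r)}_i\,\alpha_i|_{D_r}$ and peel off $c_{\alpha_i}$ with the double-angle identity, and divide out the simple-root factors; you then set $\alpha_s=0$ and observe that only the removed hyperplane $H_s$ carries roots of $\langle\alpha_r,\alpha_s\rangle$, so every surviving factor is a non-zero transverse form.
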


\begin{rmk}
    If $\mathcal{R}$ is simply laced
then $N_r=\emptyset$; for $\mathcal{R}=F_4$, $N_r=\emptyset$ unless $\alpha_r$ is the long simple
root of the double bond, in which case $N_r$ consists of the adjacent short simple root.
\end{rmk}
\begin{proof}
By \cref{prop:Jkequiv}, $J_r|_{D_r}$ is, up to a unit, the square-free product
$\prod_{H\in\mathcal{A}_r}\mathsf{e}_{\beta_H}$, one factor per hyperplane of $\mathcal{A}_r$.
For $i\neq r$ write $H_i:=\Pi_{\alpha_i}\cap D_r\in\mathcal{A}_r$. A root $\beta$ restricts on
$D_r$ to a multiple of $\alpha_i|_{D_r}$ precisely when $\beta\in\langle\alpha_r,\alpha_i\rangle$;
by \cref{lem:rank2-positive-roots} the positive roots of
$\mathcal{R}\cap\langle\alpha_r,\alpha_i\rangle$ other than $\alpha_r,\alpha_i$ have the form
$p\alpha_r+q\alpha_i$ with $p,q\ge1$, and by \eqref{eq:trig-additivity} such a root restricts on
$D_r$ to $q\,\alpha_i|_{D_r}$. Hence $\beta_{H_i}=q^{(r)}_i\,\alpha_i|_{D_r}$. The rank-two
subsystems generated by two simple roots of $\mathcal{R}\in\{E_6,E_7,E_8,F_4\}$ are of type
$A_1\times A_1$, $A_2$ or $B_2$, so $q^{(r)}_i\in\{1,2\}$, with $q^{(r)}_i=2$ exactly when
$\alpha_r+2\alpha_i\in\mathcal{R}$; for simply laced $\mathcal{R}$ only $A_1\times A_1$ and $A_2$
occur and this never happens (cf.\ \cref{lem:no-proportional-roots}).

By \cref{defn:reduced-minors}, $I_r=J_r\prod_{\alpha\in\Delta\setminus\{\alpha_r\}}\mathsf{e}_\alpha^{-1}$
removes exactly one factor $\mathsf{e}_{\alpha_i|_{D_r}}$ at each $H_i$. Since
$\mathsf{e}_{2\gamma}=2\,\mathsf{e}_\gamma\,c_\gamma$,
\[
\frac{\mathsf{e}_{\beta_{H_i}}}{\mathsf{e}_{\alpha_i|_{D_r}}}\;\equiv\;
\begin{cases}
1, & i\notin N_r,\\[2pt]
c_{\alpha_i|_{D_r}}, & i\in N_r,
\end{cases}
\qquad\text{so}\qquad
I_r\big|_{D_r}\;\equiv\;\prod_{i\in N_r}c_{\alpha_i|_{D_r}}\prod_{H\in\mathcal{A}_r'}\mathsf{e}_{\beta_H},
\]
where $\mathcal{A}_r'=\mathcal{A}_r\setminus\{H_i:i\neq r\}$.

It remains to restrict to $\{\alpha_s=0\}$. If $s\in N_r$ then $c_{\alpha_s|_{D_r}}\to1$ by
\eqref{eq:trig-derivative}, which accounts for the omission of $i=s$ in \eqref{eq:codim2-Ir}. For
the remaining hyperplanes, $\beta_H|_D=0$ would force $\beta_H$ proportional to $\alpha_s|_{D_r}$,
i.e.\ $H=H_s$, which has been removed; equivalently, every root of
$S=\langle\alpha_r,\alpha_s\rangle$ contributes to $H_s$ and to no other hyperplane. Hence every
surviving $\beta_H$ is transverse to $S$ and restricts on $D$ to a non-zero form
$\mathsf{e}_{\beta_H|_D}$. Distinct surviving factors may collapse onto a common direction on $D$,
contributing additively to that direction's exponent, but none vanishes or degenerates into a
non-monomial unit. This gives \eqref{eq:codim2-Ir}.
\end{proof}

\begin{cor}\label{cor:codim2-full-factorization}
\cref{thm:main} holds for $\mathcal{R}\in\{E_6,E_7,E_8,F_4\}$ and $\mathrm{codim}(D)=2$.

\end{cor}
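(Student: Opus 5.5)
The plan is to combine \cref{thm:codim2-determinant} with \cref{prop:codim2-Ir-factorization}. By \cref{thm:codim2-determinant},
\[
\det\eta_D\equiv I_s\,I_r\prod_{\alpha\in\widetilde\Delta}\mathsf{e}_\alpha^2\Big|_D .
\]
I treat the three types of factors separately and show that each is, up to a unit, a product of $\mathsf{e}_{\beta|_D}$ with $\beta\in\RR_+\setminus\RR_D$ and non-negative exponents.

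\emph{Simple-root factors.} For $\alpha\in\widetilde\Delta$ we have $\alpha\notin\langle\alpha_r,\alpha_s\rangle$, so $\alpha\in\RR_+\setminus\RR_D$. Hence $\mathsf{e}_\alpha|_D=\mathsf{e}_{\alpha|_D}$ is a nonvanishing restricted form of the required shape, and it contributes exponent $2$.

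\emph{Reduced minors.} By \eqref{eq:codim2-Ir}, $I_r|_D$ and $I_s|_D$ are already products of $\mathsf{e}_{\beta|_D}^{r_{\beta,D}}$ over transverse roots, except for the factors $c_{\alpha_i|_D}$ with $i\in N_r\setminus\{s\}$, respectively $i\in N_s\setminus\{r\}$. In the simply laced case $N_r=N_s=\emptyset$, so nothing further is needed.

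\emph{The $c$-factors ($\RR=F_4$).} This is the main obstacle, since $c_\gamma$ is not of the form $\mathsf{e}_{\beta|_D}$. The plan is to pair each such factor with a matching $\mathsf{e}$-factor. First use $\mathsf{e}_{2\gamma}=2\,\mathsf{e}_\gamma c_\gamma$ to write
\[
c_{\alpha_i|_D}\,\mathsf{e}_{\alpha_i|_D}\propto\mathsf{e}_{2\alpha_i|_D}.
\]
By the remark after \cref{prop:codim2-Ir-factorization}, $N_r\neq\emptyset$ only when $\alpha_r$ is the long simple root of the double bond and $i$ is the adjacent short root, with $\alpha_r+2\alpha_i\in\RR$. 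When $i\neq s$, the index $i$ lies in $\widetilde\Delta$, so the factor $\mathsf{e}_{\alpha_i}^2|_D$ is available. One copy absorbs $c_{\alpha_i|_D}$ into $\mathsf{e}_{2\alpha_i|_D}$.

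It remains to realise $2\alpha_i|_D$ as a restricted root. Since $\alpha_r|_D=0$, \eqref{eq:trig-additivity} gives
\[
(\alpha_r+2\alpha_i)|_D=2\alpha_i|_D,
\]
and $\alpha_r+2\alpha_i$ is a positive root not lying in $\langle\alpha_r,\alpha_s\rangle$ because $i\neq s$. So the product is $\mathsf{e}_{\beta|_D}$ for a transverse positive root $\beta$.

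Since $N_r$ has at most one element, and likewise $N_s$, at most two copies are needed. These are the two copies of $\mathsf{e}_{\alpha_i}$ supplied by the square, or one copy each if $r$ and $s$ produce different indices $i$. So the supply of $\mathsf{e}_{\alpha_i}$ factors suffices.

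\emph{Conclusion.} Every factor is now of the form $\mathsf{e}_{\beta|_D}^{n}$ with $n\ge0$ and $\beta\in\RR_+\setminus\RR_D$, up to units. This is \eqref{eq:main-factorization}. Only the total exponent along each restricted direction matters, so collapses of several roots onto one direction of $D$ are harmless, as in the proof of \cref{prop:codim2-Ir-factorization}.
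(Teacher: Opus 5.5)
Your proposal is correct and follows the paper's proof essentially step for step. You substitute \eqref{eq:codim2-Ir} into \cref{thm:codim2-determinant}, observe that any surviving $c_{\alpha_i|_D}$ has $i\notin\{r,s\}$ so that $\mathsf{e}_{\alpha_i|_D}^2$ is available, and absorb the $c$-factor via $\mathsf{e}_{\alpha_i}c_{\alpha_i}\equiv\mathsf{e}_{2\alpha_i}$ and $2\alpha_i|_D=(\alpha_r+2\alpha_i)|_D$ for the transverse root $\alpha_r+2\alpha_i$. Your provision for two $c$-factors, like the paper's $c^2_{\alpha_i|_D}$ case, is harmless but never needed: in $F_4$ only the long simple root of the double bond has $N_r\neq\emptyset$, so at most one $c$-factor ever occurs.
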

%$\det\eta_D$ at a codimension-two stratum factorises, 
\begin{proof}
By \cref{thm:codim2-determinant},
$\det\eta_D\equiv I_r\,I_s\prod_{\alpha\in\widetilde\Delta}\mathsf{e}_\alpha^{2}\big|_D$ with
$\widetilde\Delta=\Delta\setminus\{\alpha_r,\alpha_s\}$. Insert \eqref{eq:codim2-Ir} for $I_r|_D$
and its analogue for $I_s|_D$. Every factor $c_{\alpha_i|_D}$ occurring there has
$i\notin\{r,s\}$, so $\alpha_i\in\widetilde\Delta$ and $\mathsf{e}_{\alpha_i|_D}^{2}$ is available.
Since $\mathsf{e}_{\alpha_i}c_{\alpha_i}\equiv\mathsf{e}_{2\alpha_i}$ and, by
\eqref{eq:trig-additivity}, $2\alpha_i|_D=(\alpha_r+2\alpha_i)|_D$ with
$\alpha_r+2\alpha_i\in\mathcal{R}_+\setminus\mathcal{R}_D$, we have
\[
c_{\alpha_i|_D}\,\mathsf{e}_{\alpha_i|_D}^{2}\;\equiv\;
\mathsf{e}_{\alpha_i|_D}\,\mathsf{e}_{(\alpha_r+2\alpha_i)|_D}\,,
\qquad
c_{\alpha_i|_D}^{2}\,\mathsf{e}_{\alpha_i|_D}^{2}\;\equiv\;
\mathsf{e}_{(\alpha_r+2\alpha_i)|_D}^{2}\,,
\]
with the second identity covering the case in which $I_r|_D$ and $I_s|_D$ contribute $c_{\alpha_i|_D}$
for the same $i$. The result is a product of forms $\mathsf{e}_{\beta|_D}$,
$\beta\in\mathcal{R}_+\setminus\mathcal{R}_D$, with non-negative integer exponents, which is
\eqref{eq:main-factorization}.
\end{proof}
\subsection{Codimension three in the simply laced case}

Assume $\mathcal{R}$ simply laced, and let $\lambda,\nu,\theta\in\Delta$ be distinct, $D=D_{\lambda,\nu,\theta}$, $\mathcal{R}_D=\mathcal{R}\cap\langle\lambda,\nu,\theta\rangle$. According to the subgraph of the Dynkin diagram spanned by $\lambda,\nu,\theta$, the system $\mathcal{R}_D$ is of type $A_3$, $A_2\times A_1$, or $A_1^{3}$. As in \cite{AntoniouFS:2020}, we write $\eta^{\alpha\beta}$, $\omega^\vee_{\alpha}$, $J_\alpha$, $e^\alpha$ for the objects labelled by the positions of $\alpha,\beta\in\{\lambda,\nu,\theta\}$ in the chosen ordering of $\Delta$.

\subsubsection{Strata of type $A_3$}\label{sec:codim3-A3}

Let $\lambda,\nu,\theta$ form a chain, so that $\mathcal{R}_+\cap\langle\lambda,\nu,\theta\rangle=\{\lambda,\nu,\theta,\lambda+\nu,\nu+\theta,\lambda+\nu+\theta\}$. By Proposition \ref{prop:minor-divisibility},
\begin{equation}\label{eq:A3-minor-factorization}
J=\mathsf{e}_\lambda\mathsf{e}_\nu\mathsf{e}_\theta\,\mathsf{e}_{\lambda+\nu}\mathsf{e}_{\nu+\theta}\mathsf{e}_{\lambda+\nu+\theta}\,\Pi,
\qquad
J_\lambda=\mathsf{e}_\nu\mathsf{e}_\theta\mathsf{e}_{\nu+\theta}K_\lambda,
\quad
J_\nu=\mathsf{e}_\lambda\mathsf{e}_\theta K_\nu,
\quad
J_\theta=\mathsf{e}_\lambda\mathsf{e}_\nu\mathsf{e}_{\lambda+\nu}K_\theta,
\end{equation}
where $\Pi$, up to a unit, is the ambient product $\prod_{\beta\in\mathcal{R}_+\setminus\mathcal{R}_D}\mathsf{e}_\beta$ over the roots complementary to $\mathcal{R}_D=\mathcal{R}\cap\langle\lambda,\nu,\theta\rangle$, nonvanishing at generic points of $D$, and $K_\lambda,K_\nu,K_\theta$ are Fourier polynomials. We order the simple roots so that $\sigma^{-1}(\lambda)$ is even and $\sigma^{-1}(\nu)=\sigma^{-1}(\lambda)+1$, $\sigma^{-1}(\theta)=\sigma^{-1}(\lambda)+2$; by Proposition~\ref{prop:cramer-representation} the relevant components of the identity field are then
\begin{equation*}
e^{\lambda}=-\frac{K_\lambda}{\mathsf{e}_\lambda\,\mathsf{e}_{\lambda+\nu}\,\mathsf{e}_{\lambda+\nu+\theta}\,\Pi},
\qquad
e^{\nu}=\frac{K_\nu}{\mathsf{e}_\nu\,\mathsf{e}_{\lambda+\nu}\,\mathsf{e}_{\nu+\theta}\,\mathsf{e}_{\lambda+\nu+\theta}\,\Pi},
\qquad
e^{\theta}=-\frac{K_\theta}{\mathsf{e}_\theta\,\mathsf{e}_{\nu+\theta}\,\mathsf{e}_{\lambda+\nu+\theta}\,\Pi}.
\end{equation*}
Restrictions are performed in the order: $\nu=0$ first, then $\theta\to0$, then $\lambda\to0$.

\begin{lem}\label{lem:A3-cofactor-relations}
The following relations hold:
\begin{equation*}
K_\nu\big|_{D_\nu}=\mathsf{e}_\lambda\,K_\theta+\mathsf{e}_\theta\,B\;\big|_{D_\nu},
\qquad
B\big|_{D}=K_\theta\big|_{D},
\qquad
K_\lambda\big|_{D}=K_\theta\big|_{D},
\end{equation*}
where $B$ is regular at generic points of $D$.
\end{lem}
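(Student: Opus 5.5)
The plan is to mimic the proof of Proposition~\ref{prop:adjacent-minor}, working with the columns of the finite Jacobian block with the $t_{\bar k}$ row deleted. By Proposition~\ref{prop:divisibility}, write $\partial_\alpha t_i=\mathsf{e}_\alpha Q_{\alpha i}$ for $\alpha\in\{\lambda,\nu,\theta\}$, with $Q_\alpha$ the corresponding column vectors. The three minors $J_\lambda,J_\nu,J_\theta$ share all columns except those labelled by $\lambda,\nu,\theta$; each has two of these three columns. Since the chosen ordering places $\lambda,\nu,\theta$ consecutively, the relative signs from column permutations are fixed once and for all.

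\emph{First relation.} On $D_\nu$ the mixed-derivative computation of Proposition~\ref{prop:adjacent-minor} for the adjacent pairs $(\lambda,\nu)$ and $(\nu,\theta)$ gives
\[
\partial_\lambda\partial_\nu t_i=(\lambda,\nu)\,c_\nu Q_{\nu i}+\mathsf{e}_\nu\partial_\lambda Q_{\nu i}=(\lambda,\nu)\,c_\lambda Q_{\lambda i}+\mathsf{e}_\lambda\partial_\nu Q_{\lambda i}.
\]
Restricting to $\nu=0$, this expresses $Q_\nu|_{D_\nu}$ as $c_\lambda Q_\lambda+\mathsf{e}_\lambda\partial_\nu Q_\lambda$, up to the factor $(\lambda,\nu)=-1$. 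The analogous identity with $\theta$ also holds. I would expand $J_\nu=\det(\dots,v_\lambda,\ ,v_\theta,\dots)$ and compare it with $J_\theta=\det(\dots,v_\lambda,v_\nu,\ ,\dots)$. The column $v_\nu=\mathsf{e}_\nu Q_\nu$ vanishes on $D_\nu$, so instead I work with the divided objects $J_\theta/(\mathsf{e}_\nu)$ and $J_\lambda/\mathsf{e}_\nu$ and substitute $Q_\nu|_{D_\nu}$. Using $v_\lambda=\mathsf{e}_\lambda Q_\lambda$ and $v_\theta=\mathsf{e}_\theta Q_\theta$, the minor $J_\nu|_{D_\nu}=\mathsf{e}_\lambda\mathsf{e}_\theta\det(\dots,Q_\lambda,Q_\theta,\dots)$ is rewritten by splitting $Q_\lambda$ via the $(\lambda,\nu)$ relation and $Q_\theta$ via the $(\nu,\theta)$ relation. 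The term containing $Q_\nu$ in the $\lambda$ slot reproduces $J_\theta/(\mathsf{e}_\nu\mathsf{e}_\lambda\mathsf{e}_{\lambda+\nu})$ up to sign, i.e.\ $K_\theta$ after dividing by $\mathsf{e}_\lambda\mathsf{e}_\theta$ and using $\mathsf{e}_{\lambda+\nu}|_{D_\nu}=\mathsf{e}_\lambda$ from \eqref{eq:trig-additivity}. This should yield the $\mathsf{e}_\lambda K_\theta$ term. The other terms carry a factor $\mathsf{e}_\theta$, and they define $B$. The $c$-factors equal $1$ on $D_\nu$ only at leading order. Since $c_\lambda$ is not $1$ on $D_\nu$, I expect to use $c_\lambda$ versus $\mathsf{e}_{\lambda+\nu}$ bookkeeping, and to absorb leftover non-unit corrections into $B$. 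This is the delicate point.

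\emph{Second and third relations.} For $B|_D=K_\theta|_D$, I would restrict the explicit determinant expression for $B$ further to $\theta=0$ and then $\lambda\to0$. Using $Q_\lambda|_D=Q_\nu|_D=Q_\theta|_D$ (all adjacent pairs in the chain), both $B$ and $K_\theta$ become the same determinant with one $Q$ column, up to the sign fixed by the ordering. For $K_\lambda|_D=K_\theta|_D$, I would divide $J_\lambda$ and $J_\theta$ by their displayed $\mathsf{e}$-factors and argue as in Proposition~\ref{prop:adjacent-minor}. After dividing, both become determinants containing two of the $Q$ columns, and these coincide on $D$. The cyclic permutation sign is absorbed by the parity convention on $\sigma^{-1}(\lambda)$.

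\emph{Main obstacle.} The hard part is extracting the extra factors $\mathsf{e}_{\nu+\theta}$ and $\mathsf{e}_{\lambda+\nu}$ in $J_\lambda$ and $J_\theta$. These are not simple-root factors, so the divided determinants must be handled through limits. I would take the restrictions in the stated order. At each step I would use l'Hôpital-type expansions $\mathsf{e}_{\lambda+\nu}\to\mathsf{e}_\lambda$ and $\mathsf{e}_{\nu+\theta}\to\mathsf{e}_\theta$ on $D_\nu$, and then the limit $\mathsf{e}_\theta/\mathsf{e}_{\nu+\theta}\to1$. Theorem~\ref{thm:determinant-formula} guarantees that the total limits exist, and regularity of $B$ at generic points of $D$ would follow from this.
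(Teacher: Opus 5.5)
Your first relation is essentially the paper's argument. On $D_{\nu,\theta}$ the column relation gives $Q_\nu=Q_\theta$, so $K_\nu=\mathsf{e}_\lambda K_\theta$ there (this is Proposition~\ref{prop:adjacent-minor} for $(\nu,\theta)$). One then simply \emph{defines} $B:=\mathsf{e}_\theta^{-1}(K_\nu-\mathsf{e}_\lambda K_\theta)$, which is regular near $D$, so no $c_\lambda$ bookkeeping is needed. Your slots are swapped, though: putting $Q_\nu$ in the $\lambda$ slot produces $\mathsf{e}_{\nu+\theta}K_\lambda$, which is an $\mathsf{e}_\theta$-term. The $\mathsf{e}_\lambda K_\theta$ term comes from substituting in the $\theta$ slot.

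The genuine gap is in the second and third relations. Write $m(a,b)$ for the minor whose two surviving columns from the triple are $a,b$. Then $K_\nu=m(Q_\lambda,Q_\theta)$, $\mathsf{e}_{\lambda+\nu}K_\theta=m(Q_\lambda,Q_\nu)$ and $\mathsf{e}_{\nu+\theta}K_\lambda=m(Q_\nu,Q_\theta)$. Your pair relations say that on $D_\nu$
\[
Q_\nu=c_\lambda Q_\lambda-\mathsf{e}_\lambda X=c_\theta Q_\theta-\mathsf{e}_\theta Y,
\]
with $X,Y$ proportional, with the same constant, to the transverse derivatives $\partial_\nu Q_\lambda$, $\partial_\nu Q_\theta$. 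Substituting gives, on $D_\nu$, $K_\theta=-m(Q_\lambda,X)$ and $K_\lambda=m(Q_\theta,Y)$, and $B|_{D_{\nu,\theta}}=m(Q_\lambda,Y)$. Hence, with $Q:=Q_\lambda|_D=Q_\nu|_D=Q_\theta|_D$,
\[
B|_D=m(Q,Y)|_D=K_\lambda|_D,\qquad K_\theta|_D=-m(Q,X)|_D .
\]
So the coincidence of the $Q$'s on $D$ does not make $B$ and $K_\theta$ ``the same determinant''. Any minor containing two of the $Q$'s just vanishes on $D$, and that $0=0$ is all your argument for the third relation delivers. Both remaining relations are in fact equivalent to $m(Q,X+Y)|_D=0$. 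Nothing in your proposal supplies this identity, and your further plan (l'H\^opital expansions plus existence of limits from Theorem~\ref{thm:determinant-formula}) cannot produce it either.

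The missing ingredient is invariance under the reflection in the non-simple root $\lambda+\nu+\theta$. The paper uses it through Proposition~\ref{prop:minor-restriction}: $J_\nu|_{D_\nu}$ is divisible by $\mathsf{e}_{(\lambda+\nu+\theta)|_{D_\nu}}=\mathsf{e}_{\lambda+\theta}$. To see that this is exactly what you need, note that on $\{\nu=0,\ \lambda+\theta=0\}$ one has $c_\lambda=c_\theta$ and $\mathsf{e}_\lambda=-\mathsf{e}_\theta$. Then your relations give $K_\nu=-c_\theta^{-1}\mathsf{e}_\theta\, m(X+Y,Q_\theta)$ there, so the divisibility forces $m(Q,X+Y)|_D=0$.

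The paper then writes $\mathsf{e}_\lambda K_\theta+\mathsf{e}_\theta B=\mathsf{e}_{\lambda+\theta}P$ on $D_\nu$. Restricting to $\theta=0$ gives $P=K_\theta$, and restricting to $\lambda=0$ gives $P=B$, so $B|_D=K_\theta|_D$. For the third relation it applies Proposition~\ref{prop:adjacent-minor} on $D_{\nu,\lambda}$, where $\mathsf{e}_{\nu+\theta}\to\mathsf{e}_\theta$ is a non-zero-divisor and can be cancelled. This gives $K_\nu=\mathsf{e}_\theta K_\lambda$, hence $K_\lambda=B$ on $D_{\nu,\lambda}$, and the result follows from the second relation.
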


\begin{proof}
Proposition~\ref{prop:adjacent-minor} applied to the pair $(\nu,\theta)$ gives $\mathsf{e}_\theta^{-1}J_\nu=\pm\,\mathsf{e}_\nu^{-1}J_\theta$ on $D_{\nu,\theta}$. Inserting \eqref{eq:A3-minor-factorization} and using $\mathsf{e}_{\lambda+\nu}|_{\nu=0}=\mathsf{e}_\lambda$ (property \eqref{eq:trig-additivity}) yields $K_\nu=\mathsf{e}_\lambda K_\theta$ on $D_{\nu,\theta}$. Hence $K_\nu-\mathsf{e}_\lambda K_\theta$ vanishes on $\{\theta=0\}$ inside $D_\nu$. Since $\mathsf{e}_{\theta|_{D_\nu}}$ vanishes to first order along the irreducible divisor $\{\theta|_{D_\nu}=0\}\supset D$, the quotient $B:=\mathsf{e}_\theta^{-1}(K_\nu-\mathsf{e}_\lambda K_\theta)$ is a rational function on $D_\nu$ regular at generic points of that divisor, hence of $D$ --- which is all that the restrictions below require --- and the first relation holds. 

Next, Proposition~\ref{prop:minor-restriction} applied to $J_\nu$ shows that $J_\nu|_{D_\nu}$ is divisible by $\mathsf{e}_{\lambda+\nu+\theta}|_{D_\nu}=\mathsf{e}_{\lambda+\theta}$, whence
\begin{equation*}
\mathsf{e}_\lambda K_\theta+\mathsf{e}_\theta B=\mathsf{e}_{\lambda+\theta}\,P
\quad\text{on }D_\nu
\end{equation*}
for some $P$, regular at generic points of $D$ by the same order-of-vanishing argument applied to the divisor $\{(\lambda+\theta)|_{D_\nu}=0\}\supset D$, along which the left-hand side vanishes. Restriction to $\{\theta=0\}$, where $\mathsf{e}_{\lambda+\theta}\to\mathsf{e}_\lambda$, gives $P=K_\theta$ on $D_{\nu,\theta}$; restriction to $\{\lambda=0\}$, where $\mathsf{e}_{\lambda+\theta}\to\mathsf{e}_\theta$, gives $P=B$ on $D_{\nu,\lambda}$. Hence $B|_D=P|_D=K_\theta|_D$. 

Finally, Proposition~\ref{prop:adjacent-minor} for the pair $(\nu,\lambda)$ gives $K_\nu=\mathsf{e}_\theta K_\lambda$ on $D_{\nu,\lambda}$. Combined with the first relation this yields $K_\lambda=B$ on $D_{\nu,\lambda}$, and therefore $K_\lambda|_D=K_\theta|_D$.
\end{proof}

Write $J=\mathsf{e}_\lambda\mathsf{e}_\nu\mathsf{e}_\theta\,\bar J$, $\bar J=\mathsf{e}_{\lambda+\nu}\mathsf{e}_{\nu+\theta}\mathsf{e}_{\lambda+\nu+\theta}\Pi$, and rescale rows and columns of the transverse minor:
\begin{equation}\label{eq:A3-rescaled-minor}
\det\eta_D
=\varepsilon\,\bar J^{\,2}\,
\det\begin{pmatrix}
\mathsf{e}_\lambda^2\eta^{\lambda\lambda} & \mathsf{e}_\lambda\mathsf{e}_\nu\eta^{\lambda\nu} & \mathsf{e}_\lambda\mathsf{e}_\theta\eta^{\lambda\theta}\\
\mathsf{e}_\lambda\mathsf{e}_\nu\eta^{\lambda\nu} & \mathsf{e}_\nu^2\eta^{\nu\nu} & \mathsf{e}_\nu\mathsf{e}_\theta\eta^{\nu\theta}\\
\mathsf{e}_\lambda\mathsf{e}_\theta\eta^{\lambda\theta} & \mathsf{e}_\nu\mathsf{e}_\theta\eta^{\nu\theta} & \mathsf{e}_\theta^2\eta^{\theta\theta}
\end{pmatrix}\Bigg|_{D}
=:\varepsilon\,\bar J^{\,2}\det A\big|_D .
\end{equation}

\begin{prop}\label{prop:A3-matrix-entries}
The entries $a_{ij}$ of $A$ are well defined at generic points of $D_\nu$ and take the following form there, with $s:=\mathsf{e}_{\lambda+\theta}$:
\begin{equation*}
a_{11}=2\,\mathsf{e}_\lambda^2\,\partial_{\omega^\vee_\lambda}\!\Big(\frac{K_\lambda}{\mathsf{e}_\lambda^2\,s\,\Pi}\Big),
\qquad
a_{22}=\frac{2K_\nu}{\mathsf{e}_\lambda\,\mathsf{e}_\theta\,s\,\Pi},
\qquad
a_{33}=2\,\mathsf{e}_\theta^2\,\partial_{\omega^\vee_\theta}\!\Big(\frac{K_\theta}{\mathsf{e}_\theta^2\,s\,\Pi}\Big),
\end{equation*}
\begin{equation*}
a_{12}=-\frac{\mathsf{e}_\lambda}{\mathsf{e}_\theta}\,\partial_{\omega^\vee_\lambda}\!\Big(\frac{K_\nu}{\mathsf{e}_\lambda\,s\,\Pi}\Big),
\qquad
a_{13}=\frac{\mathsf{e}_\lambda}{\mathsf{e}_\theta}\,\partial_{\omega^\vee_\lambda}\!\Big(\frac{K_\theta}{s\,\Pi}\Big)
+\frac{\mathsf{e}_\theta}{\mathsf{e}_\lambda}\,\partial_{\omega^\vee_\theta}\!\Big(\frac{K_\lambda}{s\,\Pi}\Big),
\qquad
a_{23}=-\frac{\mathsf{e}_\theta}{\mathsf{e}_\lambda}\,\partial_{\omega^\vee_\theta}\!\Big(\frac{K_\nu}{\mathsf{e}_\theta\,s\,\Pi}\Big).
\end{equation*}
\end{prop}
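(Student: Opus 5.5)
The plan is a direct substitution: insert the expressions for $e^\lambda,e^\nu,e^\theta$ (from Proposition~\ref{prop:cramer-representation} and \eqref{eq:A3-minor-factorization}) into the formula $\eta^{ij}=-\partial_{\omega^\vee_j}e^i-\partial_{\omega^\vee_i}e^j$ of Proposition~\ref{prop:contravariant-metric}. Then multiply by the rescaling factors $\mathsf{e}_i\mathsf{e}_j$ of \eqref{eq:A3-rescaled-minor} and restrict to $D_\nu$. Two pieces of trigonometric bookkeeping drive the argument.

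First, by \eqref{eq:trig-derivative} and the simply laced pairings $(\omega^\vee_\alpha,\beta)$, we have $\partial_{\omega^\vee_\lambda}\mathsf{e}_\nu=\partial_{\omega^\vee_\theta}\mathsf{e}_\nu=0$. Likewise $\partial_{\omega^\vee_\lambda}\mathsf{e}_\theta=\partial_{\omega^\vee_\theta}\mathsf{e}_\lambda=0$. So the factors $\mathsf{e}_\nu,\mathsf{e}_\theta$ (respectively $\mathsf{e}_\nu,\mathsf{e}_\lambda$) may be pulled in and out of $\partial_{\omega^\vee_\lambda}$ (respectively $\partial_{\omega^\vee_\theta}$) freely.

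Second, on $D_\nu$ we have $\mathsf{e}_{\lambda+\nu}\to\mathsf{e}_\lambda$, $\mathsf{e}_{\nu+\theta}\to\mathsf{e}_\theta$ and $\mathsf{e}_{\lambda+\nu+\theta}\to s=\mathsf{e}_{\lambda+\theta}$ by \eqref{eq:trig-additivity}. Because $\partial_{\omega^\vee_\lambda}$ and $\partial_{\omega^\vee_\theta}$ are tangent to $D_\nu$, the $\nu$-independent parts of the quotients commute with this restriction.

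With these, the diagonal entries $a_{11},a_{33}$ and the off-diagonal entries $a_{12},a_{13},a_{23}$ follow by the same short computation. Take $a_{13}$ as the model. Here $-\mathsf{e}_\lambda\mathsf{e}_\theta(\partial_{\omega^\vee_\theta}e^\lambda+\partial_{\omega^\vee_\lambda}e^\theta)$ becomes
\[\mathsf{e}_\lambda\mathsf{e}_\theta\,\partial_{\omega^\vee_\theta}\tfrac{K_\lambda}{\mathsf{e}_\lambda\mathsf{e}_{\lambda+\nu}\mathsf{e}_{\lambda+\nu+\theta}\Pi}+\mathsf{e}_\lambda\mathsf{e}_\theta\,\partial_{\omega^\vee_\lambda}\tfrac{K_\theta}{\mathsf{e}_\theta\mathsf{e}_{\nu+\theta}\mathsf{e}_{\lambda+\nu+\theta}\Pi}.\]
We pull $\mathsf{e}_\lambda^{-1}$ out of $\partial_{\omega^\vee_\theta}$ and $\mathsf{e}_\theta^{-1}$ out of $\partial_{\omega^\vee_\lambda}$. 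Then we restrict: $\mathsf{e}_{\lambda+\nu}\to\mathsf{e}_\lambda$ is $\partial_{\omega^\vee_\theta}$-constant on $D_\nu$, and $\mathsf{e}_{\nu+\theta}\to\mathsf{e}_\theta$ is $\partial_{\omega^\vee_\lambda}$-constant. This yields the stated formula. The signs come from the parity convention fixed for $\sigma^{-1}(\lambda)$, which produces the minus signs in $e^\lambda,e^\theta$.

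The delicate entry is $a_{22}=-2\mathsf{e}_\nu^2\partial_{\omega^\vee_\nu}e^\nu$, since $\partial_{\omega^\vee_\nu}$ is transverse to $D_\nu$. Write $e^\nu=K_\nu/(\mathsf{e}_\nu F)$ with $F=\mathsf{e}_{\lambda+\nu}\mathsf{e}_{\nu+\theta}\mathsf{e}_{\lambda+\nu+\theta}\Pi$. Then
\[\mathsf{e}_\nu^2\partial_{\omega^\vee_\nu}e^\nu=-c_\nu K_\nu/F+\mathsf{e}_\nu\,\partial_{\omega^\vee_\nu}(K_\nu/F).\]
On $D_\nu$ the second term vanishes, because $F$ is generically non-zero on $D_\nu$ ($\lambda,\theta$ and $\lambda+\theta$ do not vanish there and $\Pi$ is generic), and $c_\nu\to1$. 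This gives $a_{22}=2K_\nu/(\mathsf{e}_\lambda\mathsf{e}_\theta s\Pi)$.

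The same regularity remark handles the general well-definedness claim. Every remaining singular factor is $\mathsf{e}_\lambda$, $\mathsf{e}_\theta$, $s$ or $\Pi$, none of which vanishes identically on $D_\nu$. In $a_{12}$ and $a_{23}$ the potentially transverse derivative $\partial_{\omega^\vee_\nu}e^\lambda$ (respectively $\partial_{\omega^\vee_\nu}e^\theta$) is multiplied by $\mathsf{e}_\nu$, so it drops out at $\nu=0$. Only $\partial_{\omega^\vee_\lambda}e^\nu$ (respectively $\partial_{\omega^\vee_\theta}e^\nu$) survives, which reproduces the displayed quotients after cancelling $\mathsf{e}_\nu$.

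I expect the main obstacle to be sign and parity bookkeeping rather than anything conceptual. That covers matching the cyclic conventions of Proposition~\ref{prop:cramer-representation} with the chosen ordering, and checking the pairings $(\omega^\vee_\alpha,\beta)$ used to commute $\mathsf{e}$-factors past the derivatives.
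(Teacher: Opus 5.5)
Your proposal is correct and follows essentially the same route as the paper. You substitute the components $e^\lambda,e^\nu,e^\theta$ into Proposition~\ref{prop:contravariant-metric} and use the Leibniz rule so that only the $-c_\nu\mathsf{e}_\nu^{-2}$ term survives in $a_{22}$; you drop $\mathsf{e}_\nu\partial_{\omega^\vee_\nu}e^\lambda$ and $\mathsf{e}_\nu\partial_{\omega^\vee_\nu}e^\theta$ because $e^\lambda,e^\theta$ have no $\mathsf{e}_\nu$-pole, and you commute the remaining factors past $\partial_{\omega^\vee_\lambda},\partial_{\omega^\vee_\theta}$ before restricting via \eqref{eq:trig-additivity} (these vanishing pairings come from $(\omega^\vee_i,\alpha_j)=\delta_{ij}$, not from the simply laced hypothesis).
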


\begin{proof}
By Proposition~\ref{prop:contravariant-metric}, $\eta^{\alpha\beta}=-\partial_{\omega^\vee_\alpha}e^\beta-\partial_{\omega^\vee_\beta}e^\alpha$. Consider $a_{22}=\mathsf{e}_\nu^2\eta^{\nu\nu}=-2\mathsf{e}_\nu^2\partial_{\omega^\vee_\nu}e^\nu$. Expanding $e^\nu$ by the Leibniz rule, the derivative of $\mathsf{e}_\nu^{-1}$ contributes $-c_\nu\mathsf{e}_\nu^{-2}$, whose product with $\mathsf{e}_\nu^2$ survives the restriction $\nu\to0$ with $c_\nu\to1$, while every other contribution retains a factor $\mathsf{e}_\nu$ and vanishes; the remaining factors restrict by \eqref{eq:trig-additivity} as \[\mathsf{e}_{\lambda+\nu}\to\mathsf{e}_\lambda\,, \quad \mathsf{e}_{\nu+\theta}\to\mathsf{e}_\theta\,, \quad \mathsf{e}_{\lambda+\nu+\theta}\to s\,.\] For $a_{12}=\mathsf{e}_\lambda\mathsf{e}_\nu\eta^{\lambda\nu}=-\mathsf{e}_\lambda\mathsf{e}_\nu(\partial_{\omega^\vee_\lambda}e^\nu+\partial_{\omega^\vee_\nu}e^\lambda)$: in the second term, $e^\lambda$ has no $\mathsf{e}_\nu$-pole, so $\mathsf{e}_\nu\partial_{\omega^\vee_\nu}e^\lambda\to0$; in the first, $\partial_{\omega^\vee_\lambda}$ annihilates $\mathsf{e}_\nu$ and $\mathsf{e}_{\nu+\theta}$,
%(zero pairings), 
so $\mathsf{e}_\nu e^\nu$ may be replaced by its regular part and the restriction $\nu=0$ gives the stated formula. The entries $a_{11}$, $a_{33}$, $a_{13}$, $a_{23}$ follow in the same way from the components $e^\lambda$, $e^\theta$ and the vanishing pairings $(\omega^\vee_\lambda,\theta)=(\omega^\vee_\theta,\lambda)=0$.
\end{proof}

Expanding $a_{33}$ and $a_{23}$ by the Leibniz rule and \eqref{eq:trig-derivative}, we have
\begin{equation}\label{eq:A3-entries-a33-a23}
a_{33}=2\,\partial_{\omega^\vee_\theta}\!\Big(\frac{K_\theta}{s\,\Pi}\Big)-\frac{4\,c_\theta\,K_\theta}{\mathsf{e}_\theta\,s\,\Pi},
\qquad
a_{23}=-\frac{1}{\mathsf{e}_\lambda}\,\partial_{\omega^\vee_\theta}\!\Big(\frac{K_\nu}{s\,\Pi}\Big)
+\frac{c_\theta\,K_\nu}{\mathsf{e}_\lambda\,\mathsf{e}_\theta\,s\,\Pi},
\end{equation}
so that, on $D_\nu$, the entries $a_{12},a_{13},a_{22},a_{23},a_{33}$ have at most simple poles along $\{\theta=0\}$ while $a_{11}$ is regular there. Decompose
\begin{equation*}
\det A = C+E,
\qquad
C=-a_{12}^2a_{33}+2a_{12}a_{23}a_{13}-a_{13}^2a_{22},
\qquad
E=a_{11}\big(a_{22}a_{33}-a_{23}^2\big);
\end{equation*}
then $E$ has a pole of order at most $2$ and $C$ of order at most $3$ along $\{\theta=0\}$.

The identity
\begin{equation}\label{eq:A3-leibniz-identity}
\frac{\mathsf{e}_\lambda K_\theta}{\Pi}\Big(\frac{c_\lambda}{\mathsf{e}_\lambda}+\frac{c_{\lambda+\theta}}{s}\Big)
-\partial_{\omega^\vee_\lambda}\frac{\mathsf{e}_\lambda K_\theta}{\Pi}
=\mathsf{e}_\lambda\Big(\frac{K_\theta}{\Pi}\,\frac{c_{\lambda+\theta}}{s}-\partial_{\omega^\vee_\lambda}\frac{K_\theta}{\Pi}\Big),
\end{equation}
which follows from \eqref{eq:trig-derivative}, plays the role of the corresponding exact cancellation in the polynomial case.

\begin{lem}\label{lem:A3-C-lemma}
Write $C=\mathsf{e}_\theta^{-3}C_1+\mathsf{e}_\theta^{-2}C_2$ on $D_\nu$, with $C_1$, $C_2$ regular at generic points of $D_{\nu,\theta}$. Then $C_1|_{D_\nu}$ is divisible by $\mathsf{e}_\theta$, and $\zeta:=\mathsf{e}_\theta^{2}C$ satisfies, with restrictions in the stated order,
\begin{equation}\label{eq:A3-C-limit}
\mathsf{e}_\lambda^{4}\,\zeta\big|_{D}\;=\;10\left(\frac{K_\theta}{\Pi}\right)^{3}\Bigg|_{D}.
\end{equation}
\end{lem}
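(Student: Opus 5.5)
The plan is to expand every entry of $A$ on $D_\nu$ as a Laurent series in the transverse variable $\theta$, of the form $a_{ij}=\mathsf{e}_\theta^{-1}\alpha_{ij}+\beta_{ij}+O(\mathsf{e}_\theta)$. The coefficients $\alpha_{ij},\beta_{ij}$ are to be regular at generic points of $D_{\nu,\theta}$, and we read them off from \cref{prop:A3-matrix-entries} and \eqref{eq:A3-entries-a33-a23}. Two facts control all the coefficients. The first is the relation of \cref{lem:A3-cofactor-relations}, $K_\nu=\mathsf{e}_\lambda K_\theta+\mathsf{e}_\theta B$ on $D_\nu$. The second is the behaviour of $s=\mathsf{e}_{\lambda+\theta}$, which restricts to $\mathsf{e}_\lambda$ at $\theta=0$ and has $\partial_{\omega^\vee_\theta}s=c_{\lambda+\theta}$. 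Substituting the first fact into $a_{12},a_{22},a_{23}$ splits each of them into an $\mathsf{e}_\lambda K_\theta$ piece and an $\mathsf{e}_\theta B$ piece. The $B$ piece lowers the pole order by one, so it only enters the subleading coefficients $\beta_{ij}$. Since $a_{11}$ is regular along $\{\theta=0\}$, only $C$ carries a cubic pole, and its coefficient is $C_1|_{\theta=0}=-\alpha_{12}^2\alpha_{33}+2\alpha_{12}\alpha_{23}\alpha_{13}-\alpha_{13}^2\alpha_{22}$.

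First step: show that $C_1|_{D_{\nu,\theta}}=0$. This will follow by expressing each $\alpha_{ij}$ through $K_\theta/\Pi$ and its $\omega^\vee_\lambda$-derivative; after the substitution $K_\nu\mapsto\mathsf{e}_\lambda K_\theta$ the entries are
\[
\alpha_{22}=\frac{2K_\theta}{\mathsf{e}_\lambda\Pi},\qquad
\alpha_{33}=-\frac{4K_\theta}{\mathsf{e}_\lambda\Pi},\qquad
\alpha_{23}=\frac{K_\theta}{\mathsf{e}_\lambda\Pi},
\]
together with
\[
\alpha_{12}=-\mathsf{e}_\lambda\,\partial_{\omega^\vee_\lambda}\!\Big(\frac{K_\theta}{\mathsf{e}_\lambda\Pi}\Big),\qquad
\alpha_{13}=\mathsf{e}_\lambda\,\partial_{\omega^\vee_\lambda}\!\Big(\frac{K_\theta}{\mathsf{e}_\lambda\Pi}\Big),
\]
where in $\alpha_{12},\alpha_{13}$ the derivative is taken before the restriction $\theta=0$. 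In those two entries the derivative of $1/s$ produces the $c_{\lambda+\theta}/s$ terms, and identity \eqref{eq:A3-leibniz-identity} is exactly what rewrites them in the common form displayed above. With $X:=\alpha_{12}=-\alpha_{13}$ and $Y:=K_\theta/(\mathsf{e}_\lambda\Pi)$, the cubic coefficient becomes
\[
-X^2(-4Y)+2X(Y)(-X)-X^2(2Y)=4X^2Y-2X^2Y-2X^2Y=0.
\]
So $C_1$ vanishes on the irreducible divisor $\{\theta|_{D_\nu}=0\}$, and by the order-of-vanishing argument already used in \cref{lem:A3-cofactor-relations}, $C_1=\mathsf{e}_\theta C_1'$ with $C_1'$ regular. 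This proves the divisibility claim, and gives $\zeta=\mathsf{e}_\theta^2C=C_1'+C_2$.

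Second step: compute $\zeta|_{D_{\nu,\theta}}$. Here $C_1'|_{\theta=0}$ is the $\mathsf{e}_\theta$-linear part of $C_1$, and $C_2|_{\theta=0}$ collects the mixed terms of the form $\alpha\alpha\beta$ from $C$. This is where $B$ and the subleading parts $\beta_{ij}$ enter, including the $c_\theta$ corrections, which vanish to second order in $\theta$ and so drop out. The $E$ part is irrelevant here, since $E$ has only a double pole and is not part of $\zeta$.

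Final step: multiply by $\mathsf{e}_\lambda^4$ and let $\lambda\to0$. Each $\alpha_{ij}$ and $\beta_{ij}$ has at most a prescribed pole in $\mathsf{e}_\lambda$, so only the most singular $\mathsf{e}_\lambda$-terms survive. In those terms every $c\to1$, every $\partial_{\omega^\vee_\lambda}$ acting on a regular factor is killed by the extra $\mathsf{e}_\lambda$, and $B|_D=K_\lambda|_D=K_\theta|_D$ by \cref{lem:A3-cofactor-relations}. Each surviving monomial is therefore an integer multiple of $(K_\theta/\Pi)^3$, and we expect the integers to add up to $10$.

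The main obstacle is the second step: isolating the $O(\mathsf{e}_\theta)$ part of $C_1$. It requires differentiating the three leading coefficients in the $\theta$ direction, where $B$, $\partial_{\omega^\vee_\theta}(K_\theta/\Pi)$ and $c_{\lambda+\theta}$ all appear. The approach is to organise the computation by the power of $\mathsf{e}_\lambda$, keeping only the most singular terms from the outset, since all others die in \eqref{eq:A3-C-limit}. Concretely, we would first verify that the terms involving $\partial_{\omega^\vee_\theta}(K_\theta/\Pi)$ cancel among themselves, by the same $4-2-2$ pattern as in the first step. What then remains is bookkeeping of the coefficients of $B\,(K_\theta/\Pi)^2$ and $(K_\theta/\Pi)^3$, which should produce the constant $10$.
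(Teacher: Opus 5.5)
Your strategy is the paper's own: Laurent-expand in $\mathsf{e}_\theta$ on $D_\nu$, substitute $K_\nu=\mathsf{e}_\lambda K_\theta+\mathsf{e}_\theta B$, show the cubic pole cancels, then extract the $\mathsf{e}_\lambda^{-4}$ part of the $\mathsf{e}_\theta^{-2}$ coefficient and use $B|_D=K_\theta|_D$.

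Your first step is correct. The leading coefficients $\alpha_{ij}$ you list are right, and the $4-2-2$ cancellation proves the divisibility claim more explicitly than the paper does. You do not even need \eqref{eq:A3-leibniz-identity} there, since $\partial_{\omega^\vee_\lambda}$ is tangent to $\{\theta=0\}$ and $s|_{\theta=0}=\mathsf{e}_\lambda$.

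The gap is the second assertion. You never compute the constant; you only say you \emph{expect} the integers to add up to $10$. That number is the whole content of \eqref{eq:A3-C-limit}, and it is what combines with the $54$ of \cref{lem:A3-E-lemma} into the non-zero $64$ of \eqref{eq:A3-Ktheta-cubed}. As written, the lemma is therefore not proved. The ``main obstacle'' you describe, differentiating $C_1$ in the $\theta$-direction separately from $C_2$, is also a detour. In your own Laurent framework, $\zeta|_{D_{\nu,\theta}}$ is simply the sum of the mixed $\alpha\alpha\beta$ terms, with $\beta_{ij}$ the full subleading coefficients.

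Closing the gap takes a few lines. With $\alpha_{13}=-\alpha_{12}=-X$, $\alpha_{22}=2Y$, $\alpha_{33}=-4Y$, $\alpha_{23}=Y$, the $\mathsf{e}_\theta^{-2}$ coefficient of $C$ collapses to
\[
\zeta\big|_{D_{\nu,\theta}}=6XY\,(\beta_{12}+\beta_{13})-X^2\,(\beta_{22}+\beta_{33}+2\beta_{23}).
\]
Three facts then finish the computation.
\begin{itemize}
\item After the substitution, the $K_\theta$-parts of $a_{12}$ and $a_{13}$ cancel identically on $D_\nu$. Hence $\beta_{12}+\beta_{13}=-\mathsf{e}_\lambda\,\partial_{\omega^\vee_\lambda}\big(B/(\mathsf{e}_\lambda^{2}\Pi)\big)=2c_\lambda B/(\mathsf{e}_\lambda^2\Pi)+O(\mathsf{e}_\lambda^{-1})$.
\item In $\beta_{22}+\beta_{33}$ the $\partial_{\omega^\vee_\theta}(K_\theta/\Pi)$ and $c_\lambda K_\theta$ terms cancel, leaving exactly $2B/(\mathsf{e}_\lambda^2\Pi)$.
\item $\beta_{23}=0$. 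The two $B$-terms produced by \eqref{eq:A3-entries-a33-a23} cancel, and at $\theta=0$ the finite part of $c_\theta K_\theta/(\mathsf{e}_\theta s\Pi)$ cancels $-\partial_{\omega^\vee_\theta}\big(K_\theta/(s\Pi)\big)$.
\end{itemize}
Since $X,Y=K_\theta/(\mathsf{e}_\lambda\Pi)+O(1)$, multiplying by $\mathsf{e}_\lambda^4$ and letting $\lambda\to0$ gives $(12-2)\,BK_\theta^2/\Pi^3$, which is $10\,(K_\theta/\Pi)^3$ on $D$. This is the paper's $8+2$ regrouped. Expanding your closed form reproduces the paper's explicit formula for $\zeta|_{D_{\nu,\theta}}$, with $w_1=c_\lambda^2$, $w_2=w_3=c_\lambda$, $w_4=w_5=1$.
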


\begin{proof}
The decomposition of $C$ follows from Proposition~\ref{prop:A3-matrix-entries} and \eqref{eq:A3-entries-a33-a23} by collecting powers of the pole at $\theta=0$. Substituting the exact relation $K_\nu=\mathsf{e}_\lambda K_\theta+\mathsf{e}_\theta B$ of Lemma~\ref{lem:A3-cofactor-relations} into $C_1$ and applying \eqref{eq:A3-leibniz-identity}, every term of $C_1|_{D_\nu}$ acquires a factor $\mathsf{e}_\theta$; hence $\zeta|_{D_{\nu,\theta}}=(\mathsf{e}_\theta^{-1}C_1+C_2)|_{D_{\nu,\theta}}$ is finite. Carrying out the substitution and the restriction $\theta=0$ (where $s\to\mathsf{e}_\lambda$, $c_{\lambda+\theta}\to c_\lambda$, $c_\theta\to1$), one obtains
\begin{align*}
\zeta\big|_{D_{\nu,\theta}}
=\frac{2}{\mathsf{e}_\lambda^{4}}\Bigg(
4\,w_1\frac{B K_\theta^2}{\Pi^3}
-2\,w_2\,\mathsf{e}_\lambda\frac{BK_\theta}{\Pi^2}\,\partial_{\omega^\vee_\lambda}\frac{K_\theta}{\Pi}
-3\,w_3\,\mathsf{e}_\lambda\frac{K_\theta^2}{\Pi^2}\,\partial_{\omega^\vee_\lambda}\frac{B}{\Pi}
-2\,w_4\,\mathsf{e}_\lambda^{2}\frac{B}{\Pi}\Big(\partial_{\omega^\vee_\lambda}\frac{K_\theta}{\Pi}\Big)^{2}
\\ +3\,w_5\,\mathsf{e}_\lambda^{2}\frac{K_\theta}{\Pi}\,\partial_{\omega^\vee_\lambda}\frac{K_\theta}{\Pi}\,\partial_{\omega^\vee_\lambda}\frac{B}{\Pi}
\Bigg)\Bigg|_{D_{\nu,\theta}} 
+\frac{2}{\mathsf{e}_\lambda^{2}}\,\frac{B}{\Pi}\left(\partial_{\omega^\vee_\lambda}\frac{K_\theta}{\Pi}-\frac{c_\lambda}{\mathsf{e}_\lambda}\frac{K_\theta}{\Pi}\right)^{2}\Bigg|_{D_{\nu,\theta}},
\end{align*}
where $w_1,\dots,w_5$ are even units, products of factors $c_\lambda$, equal to $1$ at $\lambda=0$; their precise form follows from the expansion and does not affect the limit. Multiplying by $\mathsf{e}_\lambda^{4}$ and letting $\lambda\to0$, all terms carrying positive powers of $\mathsf{e}_\lambda$ vanish, the units tend to $1$, and there remain the contributions $8\,BK_\theta^2/\Pi^3$ from the first group and $2\,BK_\theta^2/\Pi^3$ from the second. Since $B|_D=K_\theta|_D$ by Lemma~\ref{lem:A3-cofactor-relations}, the total is $10\,(K_\theta/\Pi)^3$.
\end{proof}

\begin{lem}\label{lem:A3-E-lemma}
With restrictions in the stated order,
\begin{equation}\label{eq:A3-E-limit}
\mathsf{e}_\theta^{2}E\big|_{D_{\nu,\theta}}
=-\frac{18\,K_\theta^2}{\Pi^2}\,\partial_{\omega^\vee_\lambda}\!\Big(\frac{K_\lambda}{\mathsf{e}_\lambda^{3}\,\Pi}\Big)\Bigg|_{D_{\nu,\theta}},
\qquad
\mathsf{e}_\lambda^{4}\,\mathsf{e}_\theta^{2}E\big|_{D}
=54\left(\frac{K_\theta}{\Pi}\right)^{3}\Bigg|_{D}.
\end{equation}
\end{lem}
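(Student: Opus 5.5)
The plan is to compute $E=a_{11}(a_{22}a_{33}-a_{23}^2)$ directly from Proposition~\ref{prop:A3-matrix-entries} and \eqref{eq:A3-entries-a33-a23}. We first isolate the double pole along $\{\theta=0\}$ inside $D_\nu$, then restrict $\theta\to0$, and finally take $\lambda\to0$ using Lemma~\ref{lem:A3-cofactor-relations}.

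\textbf{Step 1: the factor $a_{22}a_{33}-a_{23}^2$.} Write $a_{33}=-4c_\theta K_\theta/(\mathsf{e}_\theta s\Pi)+(\text{regular})$ and $a_{23}=c_\theta K_\nu/(\mathsf{e}_\lambda\mathsf{e}_\theta s\Pi)+(\text{regular})$. Together with $a_{22}=2K_\nu/(\mathsf{e}_\lambda\mathsf{e}_\theta s\Pi)$, each of $a_{22}a_{33}$ and $a_{23}^2$ has a pole of order exactly two.

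Multiply by $\mathsf{e}_\theta^2$ and restrict to $\theta=0$. There $c_\theta\to1$, $s\to\mathsf{e}_\lambda$, and $K_\nu\to\mathsf{e}_\lambda K_\theta$ by Lemma~\ref{lem:A3-cofactor-relations}. All cross terms involving the regular parts carry a surviving factor $\mathsf{e}_\theta$ and drop out. We should obtain
\[
\mathsf{e}_\theta^2(a_{22}a_{33}-a_{23}^2)\big|_{D_{\nu,\theta}}
=\Big(-8\,\frac{\mathsf{e}_\lambda K_\theta^2}{\mathsf{e}_\lambda^3\Pi^2}-\frac{\mathsf{e}_\lambda^2K_\theta^2}{\mathsf{e}_\lambda^4\Pi^2}\Big)
=-\frac{9K_\theta^2}{\mathsf{e}_\lambda^2\Pi^2}.
\]
Since $a_{11}$ is regular along $\theta=0$, the product $\mathsf{e}_\theta^2E$ restricts simply to $a_{11}|_{D_{\nu,\theta}}$ times this quantity.

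\textbf{Step 2: the factor $a_{11}$ on $D_{\nu,\theta}$.} Restricting at $\theta=0$, we have $s\to\mathsf{e}_\lambda$, so
\[
a_{11}\big|_{D_{\nu,\theta}}=2\mathsf{e}_\lambda^2\,\partial_{\omega^\vee_\lambda}\big(K_\lambda/(\mathsf{e}_\lambda^3\Pi)\big).
\]
This step needs care. We must check that $\partial_{\omega^\vee_\lambda}$ commutes with the restriction $\theta=0$. It does, because $(\omega^\vee_\lambda,\theta)=0$. The same holds for $\nu=0$ and the derivative of $\mathsf{e}_{\lambda+\nu+\theta}$: $\partial_{\omega^\vee_\lambda}s$ restricts to $c_\lambda$, consistent with $\partial_{\omega^\vee_\lambda}\mathsf{e}_\lambda$.

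Multiplying the two factors gives the first identity in \eqref{eq:A3-E-limit}, namely $-18(K_\theta^2/\Pi^2)\,\partial_{\omega^\vee_\lambda}(K_\lambda/(\mathsf{e}_\lambda^3\Pi))$.

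\textbf{Step 3: the limit $\lambda\to0$.} Expand by the Leibniz rule:
\[
\partial_{\omega^\vee_\lambda}\big(K_\lambda/(\mathsf{e}_\lambda^3\Pi)\big)=-3c_\lambda K_\lambda/(\mathsf{e}_\lambda^4\Pi)+\mathsf{e}_\lambda^{-3}\,\partial_{\omega^\vee_\lambda}(K_\lambda/\Pi).
\]
Multiplying by $\mathsf{e}_\lambda^4$ and letting $\lambda\to0$, the second term vanishes, $c_\lambda\to1$, and $K_\lambda|_D=K_\theta|_D$ by Lemma~\ref{lem:A3-cofactor-relations}. This yields $-18\cdot(-3)\,(K_\theta/\Pi)^3=54\,(K_\theta/\Pi)^3$.

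The main obstacle I expect is Step 1. One must verify rigorously that the regular parts contribute nothing after multiplying by $\mathsf{e}_\theta^2$, and keep consistent track of the unit factors $c_\theta$ and of the sign conventions in $a_{23}$. Everything else is a routine Leibniz expansion.
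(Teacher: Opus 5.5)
Your proposal is correct and follows the paper's proof essentially step for step. You isolate the $\mathsf{e}_\theta^{-2}$ part of $a_{22}a_{33}-a_{23}^2$, use $K_\nu|_{D_{\nu,\theta}}=\mathsf{e}_\lambda K_\theta|_{D_{\nu,\theta}}$ to obtain $-9K_\theta^2/(\mathsf{e}_\lambda^2\Pi^2)$, multiply by $a_{11}|_{D_{\nu,\theta}}$, and then Leibniz-expand and use $K_\lambda|_D=K_\theta|_D$. Two points the paper leaves implicit are justified in your version: that the cross terms vanish after multiplying by $\mathsf{e}_\theta^2$, and that $\partial_{\omega^\vee_\lambda}$ commutes with restriction to $\{\theta=0\}$ because $(\omega^\vee_\lambda,\theta)=0$.
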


\begin{proof}
By Proposition~\ref{prop:A3-matrix-entries} and \eqref{eq:A3-entries-a33-a23},
\begin{equation*}
\mathsf{e}_\theta^{2}\big(a_{22}a_{33}-a_{23}^2\big)\Big|_{D_{\nu,\theta}}
=\left(-\frac{8K_\nu K_\theta}{\mathsf{e}_\lambda^{3}\Pi^2}-\frac{K_\nu^2}{\mathsf{e}_\lambda^{4}\Pi^2}\right)\Bigg|_{D_{\nu,\theta}}
=-\frac{9K_\theta^2}{\mathsf{e}_\lambda^{2}\Pi^2}\Bigg|_{D_{\nu,\theta}},
\end{equation*}
where the second equality uses $K_\nu|_{D_{\nu,\theta}}=\mathsf{e}_\lambda K_\theta|_{D_{\nu,\theta}}$ (Lemma~\ref{lem:A3-cofactor-relations}) and $c_\theta\to1$, $s\to\mathsf{e}_\lambda$; while $a_{11}|_{D_{\nu,\theta}}=2\mathsf{e}_\lambda^{2}\partial_{\omega^\vee_\lambda}\big(K_\lambda\mathsf{e}_\lambda^{-3}\Pi^{-1}\big)$. This gives the first formula. Multiplying by $\mathsf{e}_\lambda^{4}$,
\begin{equation*}
\mathsf{e}_\lambda^{4}\,\mathsf{e}_\theta^{2}E
=-\frac{18K_\theta^2}{\Pi^2}\Big(\mathsf{e}_\lambda\,\partial_{\omega^\vee_\lambda}\frac{K_\lambda}{\Pi}-3\,c_\lambda\frac{K_\lambda}{\Pi}\Big)\Bigg|_{D_{\nu,\theta}}
\;\xrightarrow[\ \lambda\to0\ ]{}\;
54\,\frac{K_\theta^2K_\lambda}{\Pi^3}\Big|_D
=54\left(\frac{K_\theta}{\Pi}\right)^{3}\Bigg|_{D},
\end{equation*}
using $c_\lambda\to1$ and $K_\lambda|_D=K_\theta|_D$.
\end{proof}

On $D_\nu$ one has $\bar J|_{D_\nu}=\mathsf{e}_\lambda\,\mathsf{e}_\theta\,s\,\Pi$, so
\begin{equation*}
\bar J^{\,2}C\big|_{D_{\nu,\theta}}=\mathsf{e}_\lambda^{4}\big(\mathsf{e}_\theta^{2}C\big)\Pi^{2}\big|_{D_{\nu,\theta}},
\qquad
\bar J^{\,2}E\big|_{D_{\nu,\theta}}=\mathsf{e}_\lambda^{4}\big(\mathsf{e}_\theta^{2}E\big)\Pi^{2}\big|_{D_{\nu,\theta}},
\end{equation*}
since $s\to\mathsf{e}_\lambda$ at $\theta=0$. By Lemmas \ref{lem:A3-C-lemma} and \ref{lem:A3-E-lemma}, $\bar J^2C|_D=10\,K_\theta^3/\Pi$ and $\bar J^2E|_D=54\,K_\theta^3/\Pi$, whence, up to the overall sign fixed by \eqref{eq:A3-rescaled-minor},
\begin{equation}\label{eq:A3-Ktheta-cubed}
\det\eta_D=\varepsilon\,\bar J^2(C+E)\big|_D=64\,\varepsilon\,\frac{K_\theta^{\,3}}{\Pi}\bigg|_D .
\end{equation}
By Lemma~\ref{lem:A3-cofactor-relations}, $K_\lambda|_D=K_\nu|_D=K_\theta|_D$, so \eqref{eq:A3-Ktheta-cubed} may equally be written $\det\eta_D=64\,\varepsilon\,K_\lambda K_\nu K_\theta\,\Pi^{-1}|_D$; this is the case $\mathcal{R}_D=A_3$ of Proposition~\ref{prop:codim3-general} below.

\subsubsection{Strata of type $A_2\times A_1$}

Let $\lambda,\nu$ be joined in the Dynkin diagram and $\theta$ orthogonal to both. Then
\begin{equation*}
J=\mathsf{e}_\lambda\mathsf{e}_\nu\mathsf{e}_\theta\,\mathsf{e}_{\lambda+\nu}\,\Pi,
\qquad
J_\lambda=\mathsf{e}_\nu\mathsf{e}_\theta K_\lambda,
\quad
J_\nu=\mathsf{e}_\lambda\mathsf{e}_\theta K_\nu,
\quad
J_\theta=\mathsf{e}_\lambda\mathsf{e}_\nu\mathsf{e}_{\lambda+\nu}K_\theta,
\end{equation*}
with $\Pi$, up to a unit, being the ambient product $\prod_{\beta\in\mathcal{R}_+\setminus\mathcal{R}_D}\mathsf{e}_\beta$ over the roots complementary to $\mathcal{R}_D=\mathcal{R}\cap\langle\lambda,\nu,\theta\rangle$, nonvanishing at generic points of $D$. The identity-field components are, up to the signs fixed by the ordering,
\begin{equation*}
e^\lambda=\pm\frac{K_\lambda}{\mathsf{e}_\lambda\,\mathsf{e}_{\lambda+\nu}\,\Pi},
\qquad
e^\nu=\pm\frac{K_\nu}{\mathsf{e}_\nu\,\mathsf{e}_{\lambda+\nu}\,\Pi},
\qquad
e^\theta=\pm\frac{K_\theta}{\mathsf{e}_\theta\,\Pi}.
\end{equation*}
Proposition~\ref{prop:adjacent-minor} applied to the pair $(\lambda,\nu)$ gives $K_\nu=\pm K_\lambda$ on $D_{\lambda,\nu}$, hence on $D$; there is no relation involving $K_\theta$, in accordance with the decomposition $\mathcal{R}_D=A_2\sqcup A_1$.

Restrictions are performed in the order $\theta=0$, then $\nu=0$, then $\lambda\to0$. Since $(\omega^\vee_\lambda,\theta)=(\omega^\vee_\nu,\theta)=(\omega^\vee_\theta,\lambda)=(\omega^\vee_\theta,\nu)=0$, the rescaled mixed entries satisfy
\begin{equation*}
a_{13}=\mathsf{e}_\lambda\mathsf{e}_\theta\eta^{\lambda\theta}
=\mp\,\mathsf{e}_\lambda\,\partial_{\omega^\vee_\lambda}\frac{K_\theta}{\Pi}
\mp\frac{\mathsf{e}_\theta}{\mathsf{e}_{\lambda+\nu}}\,\partial_{\omega^\vee_\theta}\frac{K_\lambda}{\Pi},
\end{equation*}
and similarly for $a_{23}$: both vanish in the iterated restriction to $D$, while
\begin{equation*}
a_{33}=\mathsf{e}_\theta^2\eta^{\theta\theta}
=\pm2\,c_\theta\frac{K_\theta}{\Pi}\mp2\,\mathsf{e}_\theta\,\partial_{\omega^\vee_\theta}\frac{K_\theta}{\Pi}
\;\longrightarrow\;\pm\frac{2K_\theta}{\Pi}
\end{equation*}
on $D$. Writing $\bar J=\mathsf{e}_{\lambda+\nu}\Pi$ and estimating the pole orders of the entries as in \cref{sec:codim3-A3}, the products of $\bar J^{\,2}$ with the terms of $\det A$ containing $a_{13}$ or $a_{23}$ vanish in the iterated restriction, so that
\begin{equation*}
\det\eta_D=\varepsilon\,\bar J^{\,2}\big(a_{11}a_{22}-a_{12}^2\big)\,a_{33}\Big|_D .
\end{equation*}
After the restriction $\theta=0$, the evaluation of $\bar J^{\,2}(a_{11}a_{22}-a_{12}^2)$ is precisely the codimension-two computation of \cref{sec:codim2} for the pair $(\lambda,\nu)$, with $d_S=1$: Lemmas \ref{lem:log-derivative-limit} and \ref{lem:codim2-AB-limits} apply verbatim, the variable $\theta$ playing no role since $\partial_{\omega^\vee_\lambda}$ and $\partial_{\omega^\vee_\nu}$ annihilate $\mathsf{e}_\theta$. Combining with the limit of $a_{33}$,
\begin{equation}\label{eq:A2A1-determinant}
\det\eta_D
=\pm\,2\,\varepsilon\,9\,K_\lambda K_\nu K_\theta\,\Pi^{-1}\Big|_D ,
\end{equation}
using $K_\nu=\pm K_\lambda$ on $D$; this is the case $\mathcal{R}_D=A_2\times A_1$ of Proposition~\ref{prop:codim3-general} below.

\subsubsection{Strata of type $A_1^{3}$}

Let $\lambda,\nu,\theta$ be pairwise orthogonal. Then $J=\mathsf{e}_\lambda\mathsf{e}_\nu\mathsf{e}_\theta\,\Pi$, and $J_\gamma$ equals $K_\gamma$ times the product of the trigonometric forms of the other two simple roots, for each $\gamma\in\{\lambda,\nu,\theta\}$; the identity-field components are $e^\gamma=\pm K_\gamma/(\mathsf{e}_\gamma\Pi)$. All coweight pairings between distinct elements of $\{\lambda,\nu,\theta\}$ vanish, so each rescaled mixed entry has the form
\begin{equation*}
a_{12}=\mp\,\mathsf{e}_\lambda\,\partial_{\omega^\vee_\lambda}\frac{K_\nu}{\Pi}\mp\,\mathsf{e}_\nu\,\partial_{\omega^\vee_\nu}\frac{K_\lambda}{\Pi}
\;\longrightarrow\;0
\quad\text{on } D,
\end{equation*}
while each diagonal entry satisfies $a_{\gamma\gamma}=\pm2c_\gamma K_\gamma/\Pi\mp2\mathsf{e}_\gamma\partial_{\omega^\vee_\gamma}(K_\gamma/\Pi)\to\pm2K_\gamma/\Pi$. Since all entries are regular on $D$ and the mixed ones vanish there,
\begin{equation}\label{eq:A1cubed-determinant}
\det\eta_D=\varepsilon\,\Pi^{2}\,a_{11}a_{22}a_{33}\Big|_D
=\pm\,8\,\varepsilon\,K_\lambda K_\nu K_\theta\,\Pi^{-1}\Big|_D ,
\end{equation}
which is the case $\mathcal{R}_D=A_1^{3}$ of Proposition~\ref{prop:codim3-general} below.

%\subsubsection{The determinant formula, and full factorisation for \texorpdfstring{$E_l$}{El}}

The three computations above share a common conclusion, which we now record uniformly.

\begin{prop}
%[Codimension-three determinant, general form]
\label{prop:codim3-general}
Let $\mathcal{R}$ be simply laced and $D$ the codimension-3 stratum associated to the subroot system generated by $\{\lambda,\nu,\theta\} \subset \RR$. Then, %for any labelling of $\lambda,\nu,\theta$,
\begin{equation}\label{eq:codim3-general}
\det\eta_D\;\equiv\;K_\lambda K_\nu K_\theta\,\Pi^{-1}\Big|_D .
\end{equation}
\end{prop}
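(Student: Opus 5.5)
The plan is to argue by the Dynkin-subgraph type of $\{\lambda,\nu,\theta\}$. Since $\mathcal{R}$ is simply laced, the three simple roots span a subdiagram of type $A_3$, $A_2\times A_1$ or $A_1^3$; no other connected three-node subgraph without branching occurs, and a branch requires at least four nodes. In each case the determinant has already been computed in the preceding subsubsections, as \eqref{eq:A3-Ktheta-cubed}, \eqref{eq:A2A1-determinant} and \eqref{eq:A1cubed-determinant}. Each of these expresses $\det\eta_D$ as a non-zero numerical constant times $\varepsilon\,K_\lambda K_\nu K_\theta\,\Pi^{-1}|_D$, possibly with a sign, and $\varepsilon$ is a unit by Theorem~\ref{thm:determinant-formula}. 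Modulo units this gives \eqref{eq:codim3-general} directly, so the proof reduces to checking that the three computations are mutually consistent and exhaustive.

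In the $A_3$ case I would first convert $K_\theta^3$ into $K_\lambda K_\nu K_\theta$. This uses Lemma~\ref{lem:A3-cofactor-relations}, which gives $K_\lambda|_D=K_\theta|_D$. It also needs $K_\nu|_D=K_\theta|_D$, which follows from $K_\nu=\mathsf{e}_\lambda K_\theta+\mathsf{e}_\theta B$ on $D_\nu$; but restricting that relation naively gives $0$, so some care is needed here. The correct reading of the claim $K_\nu|_D=K_\theta|_D$ stated after \eqref{eq:A3-Ktheta-cubed} is through the normalisation of the minors in \eqref{eq:A3-minor-factorization}. In that normalisation, $J_\nu$ carries only $\mathsf{e}_\lambda\mathsf{e}_\theta$, so $K_\nu$ still contains the factor that becomes $\mathsf{e}_{\lambda+\nu}\mathsf{e}_{\nu+\theta}$-like on the stratum. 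The honest route is therefore to apply Proposition~\ref{prop:adjacent-minor} to the reduced minors, which relates $I_\nu$ to $I_\lambda$ and $I_\theta$ on the codimension-two strata. From this I would deduce that $K_\lambda K_\nu K_\theta$ and $K_\theta^3$ agree on $D$ up to a unit times restricted transverse forms. Either way the statement is only modulo units.

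The ordering and the sign conventions for the identity-field components only affect signs. The iterated restrictions are legitimate because Theorem~\ref{thm:determinant-formula} guarantees that the total restriction $P_D$ exists, so any order of limits computes it. Once \eqref{eq:codim3-general} is in hand, I would also note that $\Pi|_D$ does not vanish identically, being a product of transverse forms. This means the right-hand side is a genuine Fourier polynomial up to units on generic points of $D$.

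The step I expect to be the main obstacle is the $A_3$ identification of $K_\theta^3$ with $K_\lambda K_\nu K_\theta$, since the cofactor relations were derived at different intermediate strata and with different normalisations. The way to handle it is to read all three $K$'s through the adjacent-minor identity on $D$ itself rather than on $D_\nu$.
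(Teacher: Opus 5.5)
Your case split by the subdiagram spanned by $\{\lambda,\nu,\theta\}$ matches the paper's proof. So does your treatment of the $A_2\times A_1$ and $A_1^3$ cases by quoting \eqref{eq:A2A1-determinant} and \eqref{eq:A1cubed-determinant}. The gap is the $A_3$ case, and the repair you sketch cannot work.

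Your observation is correct and is not an artefact of a naive limit. $K_\nu$ is a Fourier polynomial, and by the first relation of \cref{lem:A3-cofactor-relations} we have $K_\nu|_{D_\nu}=\mathsf{e}_\lambda K_\theta+\mathsf{e}_\theta B$, with $K_\theta$ and $B$ regular at generic points of $D$. Hence $K_\nu|_D\equiv 0$.

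Routing through \cref{prop:adjacent-minor} only confirms this. It gives $K_\nu=\pm\mathsf{e}_\lambda K_\theta$ on $D_{\nu,\theta}$ and $K_\nu=\pm\mathsf{e}_\theta K_\lambda$ on $D_{\nu,\lambda}$. Moreover, all three reduced minors vanish on $D$: $I_\lambda$, $I_\nu$, $I_\theta$ contain the factors $\mathsf{e}_{\nu+\theta}$, $K_\nu$, $\mathsf{e}_{\lambda+\nu}$ respectively. So reading the $K$'s ``on $D$ itself'' yields only $0=0$.

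Consequently $K_\lambda K_\nu K_\theta\,\Pi^{-1}|_D=0$. On the other hand, $\det\eta_D=64\,\varepsilon\,K_\theta^{3}\,\Pi^{-1}|_D$ is not identically zero, since the restricted metric is generically non-degenerate. No argument modulo units can identify a non-zero function with $0$. Your fallback ``up to a unit times restricted transverse forms'' would not give $\equiv$ in any case.

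The extra factor is also misidentified. Beyond $\mathsf{e}_\lambda\mathsf{e}_\theta$, what $K_\nu|_{D_\nu}$ carries is $\mathsf{e}_{\lambda+\theta}=\mathsf{e}_{(\lambda+\nu+\theta)|_{D_\nu}}$, by \cref{prop:minor-restriction}. This is the restriction of a root of $\mathcal{R}_D$. It is not $\mathsf{e}_{\lambda+\nu}\mathsf{e}_{\nu+\theta}$, whose restrictions to $D_\nu$ are the already extracted $\mathsf{e}_\lambda\mathsf{e}_\theta$.

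The paper's own proof fails at the same point. It asserts $K_\lambda|_D=K_\nu|_D=K_\theta|_D$, but \cref{lem:A3-cofactor-relations} only gives $K_\lambda|_D=B|_D=K_\theta|_D$.

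What \eqref{eq:A3-Ktheta-cubed} actually establishes is $\det\eta_D\equiv K_\theta^{3}\,\Pi^{-1}|_D$. This is the displayed formula with $K_\nu$ replaced by the quotient $P=K_\nu|_{D_\nu}/\mathsf{e}_{\lambda+\theta}$ from the proof of that lemma, for which $P|_D=K_\theta|_D$.

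So the missing idea is to divide out the $\mathcal{R}_D$-factor $\mathsf{e}_{(\lambda+\nu+\theta)|_{D_\nu}}$ on $D_\nu$ before restricting to $D$. With the $A_3$ case amended in this way, your argument goes through. As literally stated, with $K_\nu$ defined by \eqref{eq:A3-minor-factorization}, the $A_3$ case of the proposition is false and cannot be proved.
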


\begin{proof}
When $\langle \lambda,\nu,\theta \rangle =A_3$, by \eqref{eq:A3-Ktheta-cubed} and $K_\lambda|_D=K_\nu|_D=K_\theta|_D$ (Lemma~\ref{lem:A3-cofactor-relations}), we have \[\det\eta_D\equiv \frac{K_\theta^3}{\Pi}\bigg|_D=\frac{K_\lambda K_\nu K_\theta}{\Pi}\bigg|_D\,.\] For type $A_2\times A_1$, this is implied by \eqref{eq:A2A1-determinant} directly, while for type $A_1^{3}$ it is \eqref{eq:A1cubed-determinant}.
\end{proof}

We now turn to the exceptional types, where the cofactor on the right of \eqref{eq:codim3-general} can be expanded explicitly.

\begin{lem}\label{lem:Kgamma-full-factorization}
Let $\mathcal{R}=E_l$ and let $\gamma\in\{\lambda,\nu,\theta\}$. Write
\[
\mathcal{R}_{D,\widehat{\gamma}} \;:=\; \mathcal{R}_+\cap\big\langle\{\lambda,\nu,\theta\}\setminus\{\gamma\}\big\rangle
\]
for the positive roots supported on the other two of $\lambda,\nu,\theta$. Then $K_\gamma|_D$ is, up to a unit, a product of linear trigonometric forms $\mathsf{e}_{\beta|_D}$ with positive integer exponents.
\end{lem}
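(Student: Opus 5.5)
The plan is to reduce $K_\gamma|_D$ to the codimension-one factorisation of \cref{prop:Jkequiv}, and then restrict that factorisation to $D$ one root at a time.

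\textbf{Step 1: relate $K_\gamma$ to $J_\gamma$ on $D_\gamma$.} By construction (see \eqref{eq:A3-minor-factorization} and its analogues for $A_2\times A_1$ and $A_1^3$), $J_\gamma$ is the product of $K_\gamma$ and the trigonometric forms $\mathsf{e}_\beta$, $\beta\in\mathcal{R}_{D,\widehat\gamma}$. Restricting to $D_\gamma=\Pi_\gamma$ and invoking \cref{prop:Jkequiv} gives, up to a unit,
\[
K_\gamma\big|_{D_\gamma}\;\equiv\;\prod_{H\in\mathcal{A}_\gamma}\mathsf{e}_{\beta_H}\Big/\prod_{\beta\in\mathcal{R}_{D,\widehat\gamma}}\mathsf{e}_{\beta|_{D_\gamma}} .
\]
Since $\mathcal{R}=E_l$ is simply laced, \cref{lem:no-proportional-roots} shows that two roots restricting to proportional forms on $D_\gamma$ restrict to equal forms up to sign. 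Hence every $\beta_H$ is the restriction of a root with primitive coefficient, and each $\beta\in\mathcal{R}_{D,\widehat\gamma}$ restricts to a form whose hyperplane $H$ appears in $\mathcal{A}_\gamma$ with $\mathsf{e}_{\beta_H}\equiv\mathsf{e}_{\beta|_{D_\gamma}}$. Distinct $\beta\in\mathcal{R}_{D,\widehat\gamma}$ give distinct hyperplanes. The reason is that if $\beta|_{D_\gamma}=\pm\beta'|_{D_\gamma}$, then $\beta\mp\beta'\in\mathbb{R}\gamma$, and by the root-system analysis in \cref{lem:no-proportional-roots} this forces $\beta\mp\beta'=\pm\gamma$. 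That is impossible when $\beta,\beta'$ lie in the span of the other two simple roots, by linear independence. So the division cancels exactly one factor per such hyperplane, and $K_\gamma|_{D_\gamma}$ is a square-free product $\prod_{H\in\mathcal{A}_\gamma'}\mathsf{e}_{\beta_H}$. Here $\mathcal{A}_\gamma'$ is $\mathcal{A}_\gamma$ minus the hyperplanes cut out by roots of $\mathcal{R}_{D,\widehat\gamma}$.

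\textbf{Step 2: restrict further from $D_\gamma$ to $D$.} For this I will use \eqref{eq:trig-additivity}, or simply the fact that restriction of Fourier polynomials is a ring homomorphism, so each $\mathsf{e}_{\beta_H}$ restricts to $\mathsf{e}_{\beta_H|_D}$. The remaining task is to show that no surviving factor vanishes identically on $D$, so that no factor degenerates to zero.

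A factor $\beta_H|_D=0$ would mean $\beta_H$, lifted to a root $\beta$, lies in $\langle\lambda,\nu,\theta\rangle$. There are two cases.
\begin{itemize}
\item If $\beta$ has no $\gamma$-component, then $\beta\in\mathcal{R}_{D,\widehat\gamma}$ and $H$ was removed in Step 1.
\item If $\beta$ has a $\gamma$-component, then on $D_\gamma$ the form $\beta$ restricts like $\beta-c\gamma$ with $c\in\{\pm1\}$. By the simply-laced analysis, $\beta-c\gamma$ is then either zero or a root of $\mathcal{R}_{D,\widehat\gamma}$, so again $H$ was removed.
\end{itemize}
Every surviving factor therefore restricts to a nonzero form $\mathsf{e}_{\beta|_D}$ with $\beta\in\mathcal{R}_+\setminus\mathcal{R}_D$. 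Factors that become proportional on $D$ merge into integer powers. By \cref{lem:cyclotomic-ideal}, proportional restricted forms with non-unit ratio may need to be written as $\mathsf{e}_{r\gamma}$; I will keep each factor as the restriction of an actual root, which suffices. The exponents are positive integers, and the prefactor $\re^{(l-1)d_{\bar k}x_{l+1}/2}$ together with the removed factors contributes only units.

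\textbf{Main obstacle.} The hardest point is the precise bookkeeping in Step 1. I need to confirm that \cref{prop:Jkequiv} combined with the definition of $K_\gamma$ via \cref{prop:minor-divisibility} really cancels exactly one factor per root of $\mathcal{R}_{D,\widehat\gamma}$. In the $A_3$ case for $\gamma=\nu$, $J_\nu=\mathsf{e}_\lambda\mathsf{e}_\theta K_\nu$ omits $\mathsf{e}_{\lambda+\theta}$ because $\lambda+\theta$ is not a root, so I will check each of the three stratum types separately. The rest is the simply-laced rank-two argument.
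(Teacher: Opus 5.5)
Your Step 1 is the same as the paper's argument and is fine: dividing $J_\gamma|_{D_\gamma}$ by $\prod_{\beta\in\mathcal{R}_{D,\widehat\gamma}}\mathsf{e}_\beta$ removes exactly one square-free factor for each hyperplane cut out by $\mathcal{R}_{D,\widehat\gamma}$.

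\textbf{The gap.} It lies in the second bullet of Step 2. You claim that for $\beta\in\mathcal{R}_D$ with a $\gamma$-component, $\beta-\gamma$ is either zero or a root of $\mathcal{R}_{D,\widehat\gamma}$. This fails for an $A_3$ stratum when $\gamma=\nu$ is the middle node. Take $\beta=\lambda+\nu+\theta$. Then $(\beta,\nu)=0$, so $\beta-\nu=\lambda+\theta$ is not a root, and $\mathcal{R}_{D,\widehat\nu}=\{\lambda,\theta\}$.

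Count the hyperplanes through $D$. Restricting the $A_3$ arrangement to any one of its hyperplanes leaves three hyperplanes through $D$. But $\mathcal{R}_{D,\widehat\gamma}$ has three positive roots for an end node and only two for the middle node. The extra hyperplane is $H=\ker\big((\lambda+\theta)|_{D_\nu}\big)\in\mathcal{A}_\nu$, with $\beta_H=(\lambda+\nu+\theta)|_{D_\nu}$. It contains $D$ and is not removed by the division. So $\mathsf{e}_{\lambda+\theta}$ survives in $K_\nu|_{D_\nu}$, and $K_\nu|_D=0$.

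This is confirmed independently in two ways. First, \cref{prop:minor-restriction} with $\beta=\lambda+\nu+\theta$ shows that $\mathsf{e}_{\lambda+\theta}$ divides $J_\nu|_{D_\nu}=\mathsf{e}_\lambda\mathsf{e}_\theta K_\nu|_{D_\nu}$. Second, \cref{lem:A3-cofactor-relations} gives $K_\nu|_{D_\nu}=\mathsf{e}_\lambda K_\theta+\mathsf{e}_\theta B$, which vanishes at $\lambda=\theta=0$. So the statement as written is false in this case, and no bookkeeping can close the gap. Your ``main obstacle'' paragraph points at exactly the right case, but the problem is in Step 2, not Step 1.

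\textbf{Comparison with the paper.} The paper's proof has the same hole. Its sentence ``every leftover factor of $K_\gamma|_{D_\gamma}$ corresponds to $\beta_H\notin\mathcal{R}_D$'' is precisely your bullet, asserted without proof. The same error underlies the claim $K_\lambda|_D=K_\nu|_D=K_\theta|_D$ after \eqref{eq:A3-Ktheta-cubed}, which would make $K_\lambda K_\nu K_\theta\Pi^{-1}|_D$ vanish for $A_3$.

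\textbf{What survives.} Your bullet does hold, and your proof is complete, in the following cases:
\begin{itemize}
\item every $\gamma$ for strata of type $A_2\times A_1$ and $A_1^3$;
\item the end nodes of an $A_3$ stratum, where $\lambda+\nu\mapsto\nu$ and $\lambda+\nu+\theta\mapsto\nu+\theta$ stay inside $\mathcal{R}_{D,\widehat\lambda}$, and symmetrically for $\theta$.
\end{itemize}
For the middle node, the right object is the quotient $P$ with $K_\nu|_{D_\nu}=\mathsf{e}_{\lambda+\theta}P$ from the proof of \cref{lem:A3-cofactor-relations}. It satisfies $P|_D=K_\theta|_D$.

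Since \eqref{eq:A3-Ktheta-cubed} expresses $\det\eta_D$ through $K_\theta^3$ alone, the end-node version of the lemma is all that \cref{prop:codim3-El-factorization} needs. You should state the lemma in that restricted form, or with $K_\nu$ replaced by $P$ in the $A_3$ case.
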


\begin{proof}
We have
\[J_\gamma=\bigg(\prod_{\alpha\in\mathcal{R}_{D,\widehat{\gamma}}}\mathsf{e}_\alpha\bigg)\, K_\gamma\]
by \eqref{eq:A3-minor-factorization}.
By Proposition~\ref{prop:Jkequiv}, $J_\gamma|_{D_\gamma}$ is, up to a unit, the square-free product $\prod_{H\in\mathcal{A}_\gamma}\mathsf{e}_{\beta_H}$, one factor per hyperplane of the restricted arrangement. By Lemma~\ref{lem:no-proportional-roots}, two roots of $\mathcal{R}$ restrict to proportional forms on $D_\gamma$ only with proportionality factor $\pm1$; in particular, for $\alpha\in\mathcal{R}_{D,\widehat{\gamma}}$ the longest restricted root with kernel $\Pi_\alpha\cap D_\gamma$ is $\pm\alpha|_{D_\gamma}$, and the corresponding factor of \eqref{eq:Jkequiv} is $\mathsf{e}_{\alpha|_{D_\gamma}}$ itself. Each root $\alpha\in\mathcal{R}_{D,\widehat{\gamma}}$ therefore occurs among these hyperplanes to the first power, so dividing $J_\gamma|_{D_\gamma}$ by $\prod_{\alpha\in\mathcal{R}_{D,\widehat{\gamma}}}\mathsf{e}_\alpha$ leaves $K_\gamma|_{D_\gamma}$ a square-free product of the remaining factors $\mathsf{e}_{\beta_H}$, \[H\notin\{\Pi_\alpha\cap D_\gamma:\alpha\in\mathcal{R}_{D,\widehat{\gamma}}\}\,.\] A root $\beta$ restricts to zero on $D$ if and only if $\beta\in\mathcal{R}\cap\langle\lambda,\nu,\theta\rangle=\mathcal{R}_D$. Every leftover factor of $K_\gamma|_{D_\gamma}$ corresponds to $\beta_H\notin\mathcal{R}_D$ (transverse to $D$), and hence restricts, upon further restriction $D_\gamma\to D$, to a genuinely non-zero linear form $\mathsf{e}_{\beta_H|_D}$. While distinct surviving factors may collapse onto a common direction on $D$, contributing additively to that direction's exponent, no factor vanishes or degenerates into a non-monomial unit. Hence $K_\gamma|_D$ is a product of linear trigonometric forms with positive integer exponents.
\end{proof}

\begin{prop}\label{prop:codim3-El-factorization}
\cref{thm:main} holds for $\mathcal{R}\in\{E_6,E_7,E_8\}$ and $\mathrm{codim}(D)=3$.
%, $K_\lambda K_\nu K_\theta\,\Pi^{-1}|_D$ — equivalently, by Proposition~\ref{prop:codim3-general}, $\det\eta_D$ itself — factorises as a product of linear trigonometric forms $\mathsf{e}_{\beta|_D}$ with non-negative integer exponents.
\end{prop}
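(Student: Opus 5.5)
We combine \cref{prop:codim3-general} with \cref{lem:Kgamma-full-factorization}. By \cref{prop:codim3-general}, for any simply laced $\mathcal{R}$ and any codimension-three stratum $D=D_{\lambda,\nu,\theta}$ with $\{\lambda,\nu,\theta\}\subset\Delta$, we have
\[
\det\eta_D\equiv K_\lambda K_\nu K_\theta\,\Pi^{-1}\big|_D .
\]
Here $\Pi$ is, up to a unit, $\prod_{\beta\in\mathcal{R}_+\setminus\mathcal{R}_D}\mathsf{e}_\beta$. Its restriction to $D$ is therefore $\prod_{\beta\in\mathcal{R}_+\setminus\mathcal{R}_D}\mathsf{e}_{\beta|_D}$ up to a unit, where each factor is a nonzero trigonometric form since $\beta\notin\mathcal{R}_D$. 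By \cref{lem:Kgamma-full-factorization}, each $K_\gamma|_D$ for $\gamma\in\{\lambda,\nu,\theta\}$ is, up to a unit, a product $\prod_{\beta}\mathsf{e}_{\beta|_D}^{k_{\gamma,\beta}}$ over transverse roots with $k_{\gamma,\beta}\ge0$. The product of the three is then of the same form. Dividing by $\Pi|_D$ produces a Laurent-type expression
\[
\prod_{\beta\in\mathcal{R}_+\setminus\mathcal{R}_D}\mathsf{e}_{\beta|_D}^{\,k_\beta-1},
\qquad k_\beta=\sum_\gamma k_{\gamma,\beta}.
\]
It remains to show the exponents become non-negative once factors collapsing onto a common hyperplane of the restricted arrangement are grouped together.

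\textbf{Step 1: group by hyperplane.} Fix a hyperplane $H$ of $\cA\cap D$. For each $\gamma$, the restricted forms $\beta|_D$ with kernel $H$ are integer multiples of a primitive $\gamma_H$. Since $\mathsf{e}_{r\gamma_H}=\prod_{\zeta^r=1}(\re^{\gamma_H}-\zeta)$ up to units, we work prime by prime in the UFD of Fourier polynomials on $D$, with primes $\re^{\gamma_H}-\zeta$. By \cref{lem:cyclotomic-ideal} and the coprimality argument in \cref{prop:Jkequiv}, it suffices to show that for each such prime $p$ the multiplicity of $p$ in $K_\lambda K_\nu K_\theta|_D$ is at least its multiplicity in $\Pi|_D$.

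\textbf{Step 2: non-negativity of the quotient.} The cleanest route avoids counting. By \cref{thm:determinant-formula}, $\det\eta_D=\varepsilon P_D$, and $P=J^2\det(\eta^{i_ai_b})$ is the determinant of the tangential block $C$ of the Gram matrix $(\eta_{rs})$. The entries of $(\eta_{rs})$ are Fourier polynomials, being sums of products of derivatives of the invariants $t^i$. Hence $\det\eta_D$ is a genuine Fourier polynomial on $D$, with no denominators. Combined with the factorisation from \cref{prop:codim3-general} and \cref{lem:Kgamma-full-factorization}, $\det\eta_D$ is then a Fourier polynomial equal, up to a unit, to a ratio of products of the primes $\re^{\gamma_H}-\zeta$. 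Unique factorisation forces every net exponent to be non-negative. Regrouping the primes for each $H$ back into forms $\mathsf{e}_{\beta|_D}$ uses $\re^{r\gamma}-1=\prod_\zeta(\re^\gamma-\zeta)$. Since each prime appearing in the denominator occurs through some $\mathsf{e}_{\beta|_D}$, each $H$ contributes a product of forms $\mathsf{e}_{\beta|_D}$ with non-negative integer exponents, times a possibly leftover product of primes $\re^{\gamma_H}-\zeta$ over a subset of roots of unity.

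\textbf{Main obstacle.} The leftover subsets of primes must themselves be of the form $\prod\mathsf{e}_{\beta|_D}^{n}$. In the simply laced case this is handled by \cref{lem:no-proportional-roots}. Distinct roots restricting to $D_\gamma$ are proportional only with factor $\pm1$. On $D$, however, multiples can appear, for instance $2\gamma_H$, so non-primitive forms must be treated carefully. For each $H$, I would list the restricted roots $\beta|_D$ with kernel $H$; their coefficients $r$ lie in $\{1,2,3\}$ for $E_l$ restricted from rank three. I would then check that the multiplicity vector over the roots of unity is a non-negative combination of the vectors of the $\mathsf{e}_{r\gamma_H}$. This holds provided each prime's multiplicity is monotone in the divisibility of $\zeta$'s order, which is inherited from each $K_\gamma$ and $\Pi$ being products of full $\mathsf{e}$'s. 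If this monotonicity fails, the fallback is a direct finite case check over the codimension-three subdiagrams of $E_6,E_7,E_8$.
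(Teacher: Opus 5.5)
Your opening reduction is the paper's: combining Proposition~\ref{prop:codim3-general} with Lemma~\ref{lem:Kgamma-full-factorization} gives $\det\eta_D\equiv\prod_{\beta}\mathsf{e}_{\beta|_D}^{k_\beta-1}$. The gap is in how you close the argument.

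Polynomiality of $\det\eta_D$ and unique factorisation only give non-negative multiplicity at each prime $\re^{\gamma_H}-\zeta$. So the sufficiency claim at the end of your Step~1 is wrong. For instance, $\mathsf{e}_{2\gamma}/\mathsf{e}_{\gamma}=2c_\gamma\equiv\re^{\gamma}+1$ has non-negative multiplicities at every prime, yet it is not of the form \eqref{eq:main-factorization}.

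You correctly flag this as the main obstacle, but your resolution fails. Monotonicity of the multiplicities in the order of $\zeta$ is \emph{not} inherited by the quotient just because numerator and denominator are each products of full $\mathsf{e}$'s: a difference of two monotone multiplicity functions need not be monotone. In the example above, the numerator has multiplicities $1,1$ at $\zeta=1,-1$, the denominator has $1,0$, and the quotient has $0,1$. So nothing structural in your argument excludes a leftover power of $c_{\gamma_H}$.

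This configuration genuinely occurs for codimension-three strata of $E_l$. Take $\lambda,\nu,\theta$ to be the three neighbours of the trivalent node $\alpha$. Then the hyperplane $\ker\alpha|_D$ carries both $\delta=\alpha|_D$ (eight transverse roots) and $2\delta$ (the highest root of the resulting $D_4$).

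A side correction: here $\mathcal{R}\cap\langle\mathcal{R}_D,\beta\rangle$ is a rank-four parabolic subsystem whose simple-root coefficients are at most $2$. Hence only ratios $1,2$ occur among restricted roots, not $3$. Had $3$ occurred, monotonicity would not even suffice; with $\delta$ primitive, you would need $m_1\ge m_{-1}+m_\omega$.

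The argument can therefore only be completed by your fallback, and that fallback is exactly the paper's proof. It reads off from \eqref{eq:Jkequiv} and \eqref{eq:A3-minor-factorization} how often each literal factor $\mathsf{e}_{\beta|_D}$ of $\Pi|_D$ occurs in $K_\lambda K_\nu K_\theta|_D$. It then writes $\det\eta_D\equiv\prod_{\beta}\mathsf{e}_{\beta|_D}^{r_{\beta,D}-1}$ and verifies $r_{\beta,D}>0$ by direct inspection for every triple of simple roots of $E_6,E_7,E_8$. Once that check is done, your Step~2 is redundant: matching exact forms gives polynomiality and the factorised shape at once.
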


\begin{proof}
By Proposition~\ref{prop:codim3-general}, $\det\eta_D\equiv K_\lambda K_\nu K_\theta\,\Pi^{-1}\big|_D$. By Lemma~\ref{lem:Kgamma-full-factorization}, each of $K_\lambda|_D$, $K_\nu|_D$, $K_\theta|_D$ is, up to a unit, a product of linear trigonometric forms $\mathsf{e}_{\beta_H|_D}$, where each $\beta_H$ is by definition the restriction to the relevant $D_\gamma$ of a positive root $\beta\in\mathcal{R}_+\setminus\mathcal{R}_D$, so that $\mathsf{e}_{\beta_H|_D}=\mathsf{e}_{\beta|_D}$. Both the numerator and the denominator $\Pi=\prod_{\beta\in\mathcal{R}_+\setminus\mathcal{R}_D}\mathsf{e}_\beta$ of \eqref{eq:codim3-general} are therefore products of trigonometric forms of restrictions of transverse positive roots, and we may write
\[
\det \eta_D \equiv \prod_{\beta \in \RR_+\setminus \RR_D} \mathsf{e}_{\beta|_D}^{r_{\beta,D}-1}\,,
\]
with $r_{\beta,D}\in\bbZ_{\geq0}$ being the multiplicity with which the factor $\mathsf{e}_{\beta|_D}$ contributed by $\beta$ to $\Pi$ occurs in the numerator. Using the explicit expressions \eqref{eq:Jkequiv} and \eqref{eq:A3-minor-factorization} for each $(\RR, D)$, we can compute by direct inspection that $r_{\beta,D} >0$ in each case, concluding the proof.
%Since the quotient equals $\det\eta_D$ up to a unit (Proposition~\ref{prop:codim3-general}), and $\det\eta_D$ is a genuine Fourier polynomial by Theorem~\ref{thm:determinant-formula}, the quotient of the two factorisations has everywhere non-negative valuation along each linear trigonometric form: writing both numerator and denominator in a common refinement of the primitive directions occurring in either, no cancellation can produce a negative net exponent without contradicting polynomiality.
%We have verified directly, using the explicit root data of $E_6$, $E_7$ and $E_8$, that this holds with clean non-negative integer exponents at every hyperplane, for every codimension-three stratum of each of the three combinatorial types ($5+10+5=20$, $6+18+11=35$, and $7+28+21=56$ strata respectively, exhausting all $\binom{l}{3}$ triples of simple roots in each case).

\end{proof}

%\begin{rmk}
%The proof above leaves one point unproven in general, rather than merely unstated: Proposition~\ref{prop:Jkequiv} labels each hyperplane $H\in\mathcal{A}_\gamma$ by the \emph{longest} restricted root $\beta_H$ with that kernel, whereas $\Pi$ is defined as a literal product over ambient roots $\beta\in\mathcal{R}_+\setminus\mathcal{R}_D$ with no such normalisation. Where several roots collapse to a common direction on $D$ with different restricted lengths, the two sides are, a priori, expressed in different generators of the same line ($\mathsf{e}_{\beta}$ against $\mathsf{e}_{k\beta}$ for some integer $k>1$, related by an even unit $c_\beta$ rather than by a further power of $\mathsf{e}_\beta$), and the polynomiality argument above does not by itself rule out a leftover unit of this kind. This did not occur at any of the $111$ strata checked — every computed exponent was a clean non-negative integer with no residual even-unit factor — but we do not have a structural argument excluding it for a stratum not yet examined.
%\end{rmk}

\subsection{Higher codimension}

It only remains to consider the cases where either $\RR=F_4$ and $\mathrm{codim}(D)=3$, or $\RR=E_l$ and $\mathrm{codim}(D)>3$. Since these strata are of relatively low dimension, and there are only finitely many cases to consider, they can be studied efficiently on a case-by-case basis. We illustrate the method in detail for $\mathrm{codim}(D)=l-1$, and refer the reader to the ancillary {\it Wolfram Language} material for the case $\RR=E_l$ with $3< \mathrm{codim}(D)< l-1$.

\subsubsection{$\RR=F_4$ and $\mathrm{codim}(D)=3$.}

From \cite{BvG}, flat coordinates of the Saito metric in the anti-diagonal normalisation \eqref{eq:killing-normalization} are given by
\[
t_4 = \frac{1}{3}\left(Y_4+1\right) \re^{2x_5}, \quad t_3=\sqrt{3}\left(Y_1+Y_4+2\right) \re^{3x_5}, \quad 
t_2 = 2\left(6 Y_1+6 Y_3-\left(Y_4-16\right) Y_4+11\right)
   \re^{4 x_5},\]
\[
t_1 =   6 \left(Y_2+Y_3+2 Y_4 \left(Y_4+3\right)+Y_1 \left(Y_4+4\right)+5\right) \re^{6x_5}, \quad
t_5 = x_{5}\,,
\]
where the fundamental characters $(Y_i(x))_{i=1}^4$ are \begin{align*}
Y_1(x) = &
\re^{-x_1}+\re^{x_1}+\re^{-x_1-x_2}+\re^{-x_2}+\re^{x_2}+\re^{x_1+x_2}+\re^{-x_1-2 x_2-2 x_3}+\re^{-x_2-2 x_3}+\re^{-x_1-x_2-2
   x_3}+\re^{-x_2-x_3}\\ &+\re^{-x_3}+\re^{x_3}  +\re^{x_2+x_3}+\re^{x_1+x_2+x_3}+\re^{x_2+2 x_3}+\re^{x_1+x_2+2 x_3}+\re^{x_1+2 x_2+2 x_3}+\re^{-2
   x_1-3 x_2-4 x_3-2 x_4}\\ &+\re^{-x_1-2 x_2-4 x_3-2 x_4}+\re^{-x_1-2 x_2-3 x_3-2 x_4} +\re^{-x_1-2 x_2-2 x_3-2 x_4}+\re^{-x_2-2
   x_3-2 x_4}+\re^{-x_1-x_2-2 x_3-2 x_4}\\ &+\re^{-x_1-2 x_2-2 x_3-x_4}+\re^{-x_2-2 x_3-x_4} +\re^{-x_1-x_2-2
   x_3-x_4}+\re^{-x_3-x_4}+\re^{-x_2-x_3-x_4}+\re^{-x_1-x_2-x_3-x_4}\\ &+\re^{-x_4}+\re^{x_4}+\re^{x_3+x_4}+\re^{x_2+x_3+x_4}+\re^{x_1+x_2+x_3+x_4}+\re^{x_2+2
   x_3+x_4}+\re^{x_1+x_2+2 x_3+x_4}+\re^{x_1+2 x_2+2 x_3+x_4}\\ &+\re^{x_2+2 x_3+2 x_4} +\re^{x_1+x_2+2 x_3+2 x_4}+\re^{x_1+2 x_2+2
   x_3+2 x_4}+\re^{x_1+2 x_2+3 x_3+2 x_4}+\re^{x_1+2 x_2+4 x_3+2 x_4}\\ &+\re^{x_1+2 x_2+3 x_3+x_4}+\re^{-x_1-2 x_2-3 x_3-x_4}+\re^{-x_1-3 x_2-4 x_3-2 x_4}\\ &+\re^{x_1+3 x_2+4 x_3+2 x_4}+\re^{2 x_1+3 x_2+4 x_3+2 x_4}+\re^{-x_1-x_2-x_3}+4\,, \\
Y_4(x) = & \re^{-x_2-x_3}+\re^{-x_1-x_2-x_3}+\re^{-x_3}+\re^{x_3}+\re^{x_2+x_3}+\re^{x_1+x_2+x_3}+\re^{-x_1-2 x_2-3 x_3-2 x_4}+\re^{-x_1-2 x_2-3 x_3-x_4}\\ & +\re^{-x_1-2 x_2-2
   x_3-x_4}+\re^{-x_2-2 x_3-x_4}+\re^{-x_1-x_2-2
   x_3-x_4}+\re^{-x_3-x_4}+\re^{-x_2-x_3-x_4}+\re^{-x_1-x_2-x_3-x_4}\\ &+\re^{-x_4}+\re^{x_4}+\re^{x_3+x_4}+\re^{x_2+x_3+x_4}+\re^{x_1+x_2+x_3+x_4} +\re^{x_2+2
   x_3+x_4}+\re^{x_1+x_2+2 x_3+x_4}\\ &+\re^{x_1+2 x_2+2 x_3+x_4}+\re^{x_1+2 x_2+3 x_3+x_4}+\re^{x_1+2 x_2+3 x_3+2 x_4}+2\,, \\
Y_2(x) = & \frac{1}{2}\l( Y_1(x)^2-Y_1(2x)\r)-Y_1(x)\,, 
\qquad
Y_3(x) =  \frac{1}{2}\l( Y_4(x)^2-Y_4(2x)\r)-Y_1(x)\,.
   \end{align*}
 
The $2\times 2$ restriction of the Saito metric to the stratum \[D_I \coloneqq \big\{ x_J=0\,,\,\big|\, J \neq I\big\}\] is then just
\[
(\eta_{D_I})_{i,j} =  \sum_{k=1}^5 \frac{\partial t_k}{\de  x_i} \frac{\partial t_{6-k}}{{\de  x_j}}\bigg|_{D_I}\,.
\]
For example, for $I=1$ we find
\begin{align*}
(\eta_{D_1})_{11} = & 
12 \re^{6x_5-4 x_1} \left(\re^{x_1}-1\right){}^2 \left(\re^{x_1}+1\right){}^2 \left(92 \re^{x_1}+390 \re^{2 x_1}+92 \re^{3 x_1}+\re^{4 x_1}+1\right)\,,\\
(\eta_{D_1})_{15}= & (\eta_{D_I})_{51} =   
   18 \re^{6x_5-4 x_1} \left(\re^{x_1}-1\right) \left(\re^{x_1}+1\right) \left(6 \re^{x_1}+\re^{2 x_1}+1\right) \left(140 \re^{x_1}+294 \re^{2 x_1}+140 \re^{3
   x_1}+\re^{4 x_1}+1\right)\,. \\
(\eta_{D_1})_{55} =  &
 9 \re^{6x_5-4 x_1} \left(664 \re^{x_1}+7860 \re^{2 x_1}+27048 \re^{3 x_1}+39442 \re^{4 x_1}+27048 \re^{5 x_1}+7860 \re^{6 x_1}+664 \re^{7 x_1}+3 \re^{8
   x_1}+3\right)\,,
\end{align*}
hence
\[
\det \eta_{D_1} = 6912 \re^{12 x_5-7 x_1} \left(\re^{x_1}-1\right){}^{12} \left(\re^{x_1}+1\right){}^2 \equiv \mathsf{e}_{\alpha_1}^{10} \mathsf{e}_{\alpha_{\rm long}}^2\Big|_{D_1}
%\,, \quad (I_1, I_2, I_3)=(2,3,4) 
\,,
\]
where $\alpha_{\rm long}=\omega_1=2 \alpha_1+3 \alpha_2+4 \alpha_3 +2 \alpha_4$ is the highest long root. Similarly, we compute
\[
\det \eta_{D_I} \equiv 
\begin{cases}
\mathsf{e}_{\alpha_{\rm long}}^3\mathsf{e}_{\alpha_{\rm short}}^6 \mathsf{e}_{\alpha_{2}}^3\Big|_{D_I}
% \left(\re^{x_2}-1\right){}^{12} \left(\re^{x_2}+1\right){}^6 \left(\re^{x_2}+\re^{2 x_2}+1\right){}^3
\,, & \quad I=2
%(I_1, I_2, I_3)=(2,3,4)
\,, \\
\mathsf{e}_{\alpha_{\rm long}}^4\mathsf{e}_{\alpha_{\rm short}}^3 \mathsf{e}_{\alpha_{2}+2\alpha_3}^4
\mathsf{e}_{\alpha_3}\Big|_{D_I}
%72 \re^{2 x_5-17 x_3} \left(\re^{x_3}-1\right){}^{12} \left(\re^{x_3}+1\right){}^8 \left(\re^{2 x_3}+1\right){}^4 \left(\re^{x_3}+\re^{2 x_3}+1\right){}^3
\,, & \quad I=3
%(I_1, I_2, I_3)=(1,3,4)
\,, \\
\mathsf{e}_{\alpha_4}^{4} \mathsf{e}_{\alpha_{\rm long}}^8\Big|_{D_I}
%-384 \re^{2 x_5-10 x_4} \left(\re^{x_4}-1\right){}^{12} \left(\re^{x_4}+1\right){}^8
\,, & \quad 
I=4
%(I_1, I_2, I_3)=(1,2,3)
\,, \\
\end{cases}
\]
where $\alpha_{\rm short} =\omega_4 = \alpha_1+2\alpha_2+3\alpha_3+2\alpha_4$ is the highest short root. We have thus verified the following.
\begin{prop}
\cref{thm:main} holds for $\RR=F_4$ and $\mathrm{codim}(D)=3$.
\end{prop}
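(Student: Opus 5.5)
The statement is a finite verification. For $\RR=F_4$ the codimension-three strata $D$ of the linear arrangement are, up to $\cW$-conjugacy, the four standard strata $D_I=\{x_J=0,\ J\neq I\}$ with $S=\Delta\setminus\{\alpha_I\}$ and $I\in\{1,2,3,4\}$. Each such $D_I$ is two-dimensional, with linear coordinates $(x_I,x_5)$. By the $\widetilde{\cW}^{(\bar k)}$-invariance noted in \cref{sec:intro-strata}, it suffices to establish \eqref{eq:main-factorization} on these four.

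The plan is to work directly with the explicit flat coordinates $t_1,\dots,t_5$ of \cite{BvG} quoted above. I first restrict the characters $Y_1,Y_4$, and through them $Y_2,Y_3$, to $D_I$; each becomes a Laurent polynomial in $\re^{x_I}$. I then form the $2\times2$ Gram matrix $(\eta_{D_I})_{ij}=\sum_k \partial_{x_i}t_k\,\partial_{x_j}t_{6-k}|_{D_I}$ with $i,j\in\{I,5\}$. The derivatives $\partial_{x_i}t_k$ for $i\in\{I,5\}$ are tangential, so this really is the pulled-back metric in linear coordinates. The $x_5$-dependence is a single overall factor $\re^{12x_5}$, because each $t_k$ is homogeneous in $\re^{x_5}$ with weights summing pairwise to $8$. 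So $\det\eta_{D_I}$ is a unit times a one-variable Laurent polynomial $p_I(\re^{x_I})$.

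Next I factor $p_I$ over $\bbQ$ into cyclotomic factors $\Phi_n(\re^{x_I})$; the displayed results show that only $n\in\{1,2,3,4\}$ occur. These have to be matched with restricted roots. Every positive root $\beta\notin\RR_{D_I}$ restricts to $c_\beta x_I$, where $c_\beta\in\{1,2,3,4\}$ is the $\alpha_I$-coefficient of $\beta$. Up to a unit, $\mathsf{e}_{\beta|_D}$ is then $\re^{c_\beta x_I}-1=\prod_{n\mid c_\beta}\Phi_n$.

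It remains to decompose the cyclotomic profile of $p_I$ as $\prod_\beta(\re^{c_\beta x_I}-1)^{n_\beta}$ with $n_\beta\ge0$. I will do this greedily, starting from the largest $c$ available in $\alpha_I$-coefficients. For example, $\Phi_1^{12}\Phi_2^{2}$ for $I=1$ becomes $(\re^{2x_1}-1)^2(\re^{x_1}-1)^{10}$, using the highest long root with coefficient $2$. For $I=3$, the factors $\Phi_4^4$, $\Phi_3^3$ and $\Phi_2^8$ force roots with $\alpha_3$-coefficients $4$, $3$ and $2$, and the exponents $4,3,4,1$ match the displayed formula. For each $I$, I also record an explicit root realising each coefficient, for instance $\alpha_2+2\alpha_3$ and the highest long and short roots.

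The main obstacle is this last step. The factorisation must be checked to be consistent: every cyclotomic multiplicity has to be absorbed with non-negative exponents, and no stray factor such as $\Phi_n$ with $n\nmid c_\beta$ for all available $\beta$, or a $c$-type even unit, may remain. This is not automatic; the codimension-two argument needed the special pairing $c_{\alpha_i}\mathsf{e}_{\alpha_i}^2$. Here it is settled by the explicit computation, for instance in the Wolfram notebook, and confirmed by inspection of the four factorisations.
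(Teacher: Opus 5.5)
Your proposal is correct and follows the paper's route. The paper likewise evaluates the $2\times2$ Gram matrix $\sum_k\partial_{x_i}t_k\,\partial_{x_j}t_{6-k}|_{D_I}$ from the explicit flat coordinates on the four standard strata and reads off the factorisation; your cyclotomic bookkeeping amounts to M\"obius inversion over the $\alpha_I$-coefficients $c\in\{1,2,3,4\}$, so the decomposition is unique and only non-negativity needs checking, which makes explicit what the paper leaves to inspection. One small slip: the $\re^{x_5}$-weights of $t_k$ and $t_{6-k}$ sum to $6$ ($2+4$, $3+3$, $6+0$), not $8$, which is why each entry carries $\re^{6x_5}$ and the determinant carries $\re^{12x_5}$.
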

\subsubsection{$\RR=E_l$ and $\mathrm{codim}(D)>3$.}

\begin{table}

\begin{tabular}{|c|c|c|}
\hline
$l$ & $I$ & $\det\eta_D$ \\
\hline\hline
6 & 1 & $\mathsf{e}_{\alpha_1}^{12}$ \\ \hline
6 & 2 & $\mathsf{e}_{\alpha_{\rm top}}^6\mathsf{e}_{\alpha_2}^6$ \\ \hline
6 & 3 & $\mathsf{e}_{\alpha_{\rm top}}^3\mathsf{e}_{\alpha_{\rm top}-\alpha_3}^6 \mathsf{e}_{\alpha_3}^3$ \\ \hline
6 & 4 & $\mathsf{e}_{\alpha_{\rm top}}^6\mathsf{e}_{\alpha_4}^6$  \\ \hline
6 & 5 & $\mathsf{e}_{\alpha_5}^{12}$ \\ \hline
6 & 6 & $\mathsf{e}_{\alpha_{\rm top}}^2\mathsf{e}_{\alpha_6}^{10}$  \\ \hline\hline
7 & 1 & $\mathsf{e}_{\alpha_1}^{16}\mathsf{e}_{\alpha_{\rm top}}^2$ \\ \hline
7 & 2 & $\mathsf{e}_{\alpha_{\rm top}}^3\mathsf{e}_{\alpha_{\rm top}-\alpha_2}^{10}\mathsf{e}_{\alpha_2}^5$ \\ \hline
7 & 3 & $\mathsf{e}_{\alpha_{\rm top}}^4\mathsf{e}_{\alpha_{\rm top}-\alpha_3}^6 \mathsf{e}_{\alpha_{\rm top}-2\alpha_3}^6\mathsf{e}_{\alpha_3}^2$ \\ \hline
7 & 4 & $\mathsf{e}_{\alpha_{\rm top}}^6\mathsf{e}_{\alpha_{\rm top}-\alpha_4}^8\mathsf{e}_{\alpha_4}^4$  \\ \hline
7 & 5 & $\mathsf{e}_{\alpha_{\rm top}}^{10}\mathsf{e}_{\alpha_5}^{8}$ \\ \hline
7 & 6 & $\mathsf{e}_{\alpha_6}^{18}$ \\ \hline
7 & 7 & $\mathsf{e}_{\alpha_{\rm top}}^{10}\mathsf{e}_{\alpha_7}^{8}$  \\ \hline\hline
8 & 1 & $\mathsf{e}_{\alpha_{\rm top}}^{14}\mathsf{e}_{\alpha_1}^{16}$ \\ \hline
8 & 2 & $\mathsf{e}_{\alpha_{\rm top}}^8\mathsf{e}_{\alpha_{\rm top}-\alpha_2}^9\mathsf{e}_{\alpha_{\rm top}-2\alpha_2}^{10}\mathsf{e}_{\alpha_2}^3$ \\ \hline
8 & 3 & $\mathsf{e}_{\alpha_{\rm top}}^6\mathsf{e}_{\alpha_{\rm top}-\alpha_3}^5\mathsf{e}_{\alpha_{\rm top}-2\alpha_3}^8\mathsf{e}_{\alpha_{\rm top}-3\alpha_3}^6\mathsf{e}_{\alpha_{\rm top}-4\alpha_3}^4\mathsf{e}_{\alpha_3}$ \\ \hline
8 & 4 & $\mathsf{e}_{\alpha_{\rm top}}^5\mathsf{e}_{\alpha_{\rm top}-\alpha_4}^8\mathsf{e}_{\alpha_{\rm top}-2\alpha_4}^9\mathsf{e}_{\alpha_{\rm top}-3\alpha_4}^6\mathsf{e}_{\alpha_4}^2$  \\ \hline
8 & 5 & $\mathsf{e}_{\alpha_{\rm top}}^4\mathsf{e}_{\alpha_{\rm top}-\alpha_5}^{12}\mathsf{e}_{\alpha_{\rm top}-2\alpha_5}^{10}\mathsf{e}_{\alpha_5}^4$ \\ \hline
8 & 6 & $\mathsf{e}_{\alpha_{\rm top}}^3\mathsf{e}_{\alpha_{\rm top}-\alpha_6}^{18}\mathsf{e}_{\alpha_6}^{9}$  \\ \hline
8 & 7 & $\mathsf{e}_{\alpha_{\rm top}}^2\mathsf{e}_{\alpha_7}^{28}$ \\ \hline
8 & 8 & $\mathsf{e}_{\alpha_{\rm top}}^9\mathsf{e}_{\alpha_{\rm top}-\alpha_8}^{14}\mathsf{e}_{\alpha_8}^{7}$  \\ \hline

\end{tabular}
\caption{Factorisation of $\det\eta_D$ (up to units) on the highest codimension strata $D_I$ for $\RR=E_l$, $I=1, \dots, l$, $l=6,7,8$.}
\label{tab:El}
\end{table}

The analysis of the family $\RR=E_l$ is computationally more involved, but conceptually identical. We quote the results for $\mathrm{codim}(D)=l-1$ in \cref{tab:El}, and refer the reader to the accompanying {\it Wolfram Language} package containing the results for $3<\mathrm{codim}(D)<l-1$. Together, they imply the following statement.

\begin{prop}
\cref{thm:main} holds for $\RR=E_l$ and $\mathrm{codim}(D)>3$.
\end{prop}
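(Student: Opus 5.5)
The statement is a finite verification, so I would prove it stratum by stratum. Only strata with $S\subset\Delta$ need to be treated, by the reduction in \cref{sec:intro-strata}. For $\RR=E_l$ with $3<\mathrm{codim}(D)\le l-1$ these are indexed by subsets $I\subset\llbracket 1,l\rrbracket$ of size $\dim D-1$, namely the simple roots not imposed.

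\textbf{Setting up the restricted metric.} On each stratum $D_I=\{x_J=0,\ J\notin I\}$ I would take the coordinates $(x_i)_{i\in I}$ together with $x_{l+1}$, as in Lemma~\ref{lem:stratum-coords}. I would then write the Gram matrix exactly as was done for $F_4$:
\[
(\eta_{D_I})_{ij}=\sum_{k=1}^{l+1}\partial_{x_i}t_k\,\partial_{x_j}t_{l+2-k}\Big|_{D_I}.
\]
Here the flat coordinates $t_k$ are the explicit polynomials in the $\re^{d_i x_{l+1}}Y_i$ from \cite{BvG}, and the fundamental characters $Y_i$ are expanded as Weyl-orbit sums of characters.

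\textbf{Making the computation tractable.} Only the restriction of the first derivatives to $D_I$ is ever needed. I would therefore not expand the $t_k$ in full. Instead, for each weight $\mu$ of the relevant representation, I would keep only the data $(\mu|_{D_I},\partial_{x_i}\mu)$, and then collapse the Fourier monomials onto the restricted lattice. This keeps the entries as Laurent polynomials in $|I|$ variables, which is what makes $E_8$ feasible (on a cluster for codimension four).

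\textbf{Verifying the factorisation.} I would take the determinant and remove its monomial part; this monomial is a unit, as is the $x_{l+1}$ dependence, which is fixed by homogeneity. The remainder I would factor over $\bbQ$ into cyclotomic-type factors $\re^{\gamma}-\zeta$ along primitive restricted directions $\gamma$. Each such factor has to be regrouped into forms $\mathsf{e}_{\beta|_D}$ with $\beta\in\RR_+\setminus\RR_D$. To do this, I would list the set $\{\beta|_{D_I}\}$ of restricted transverse roots, and for each primitive direction $\gamma$ collect the multiples $r\gamma$ that actually occur. I would then solve, using $\re^{r\gamma}-1=\prod_{\zeta^r=1}(\re^\gamma-\zeta)$, for non-negative integer exponents $n_{r\gamma}$ that reproduce the multiplicity of every factor $\re^\gamma-\zeta$. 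Table~\ref{tab:El} is exactly the output of this step in the case $|I|=1$, with $\alpha_{\rm top}-k\alpha_I$ playing the role of the restricted roots.

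\textbf{Main obstacle.} I expect the regrouping step to be the hardest. One must check that no irreducible factor appears for which the needed cyclotomic pieces are unavailable, for instance a factor $c_\gamma$ without a matching $\mathsf{e}_{2\gamma}$ coming from some restricted root. One must also check that the resulting exponents are non-negative, which is a linear feasibility problem for each direction. Non-degeneracy, meaning the determinant does not vanish identically, comes out of the same computation. I would first try the heuristic that each $c$-type factor pairs with the longest restricted root in its direction, as in the proofs of Proposition~\ref{prop:codim2-Ir-factorization} and Corollary~\ref{cor:codim2-full-factorization}. I would then fall back on solving the integer system directly whenever that pairing fails.
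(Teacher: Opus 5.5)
Your proposal is correct and follows essentially the same route as the paper. The paper also computes the pulled-back Gram matrix $\sum_k \partial_{x_i}t_k\,\partial_{x_j}t_{l+2-k}\big|_{D_I}$ case by case from the explicit flat coordinates, exactly as in its $F_4$ example, and factorises the resulting determinants directly (codimension $l-1$ in the table, intermediate codimensions in the ancillary notebook); your extra details, namely restricting characters before differentiating and casting the regrouping of cyclotomic factors into $\mathsf{e}_{\beta|_D}$ as a non-negative integer feasibility check, spell out what the paper leaves to computer algebra.
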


\begin{rmk}
    It is interesting to see from \cref{tab:El} that the set of non-vanishing exponents $\mathsf{n}_I = \{n_{\alpha_{\rm top}-j \alpha_I,D_I}\}_{j=0}^{(\omega^\vee_I,\alpha_{\rm top})-1}$ gives, in all cases, a partition of the Coxeter number $h$ of $\RR$,
    \[
    \mathsf{n}_I  \vdash h\,, \quad I=1, \dots, l\,.
    \]
    Compared to the ordinary Coxeter case, this provides a non-trivial refinement of \cite[Thm.~7.1]{AntoniouFS:2020} in our trigonometric ($q$-deformed) setup. It would be interesting to trace the exact origin of this refinement in detail. 
    
\end{rmk}

\bibliographystyle{alpha}
\bibliography{refs}

\end{document}